%% LyX 2.3.0 created this file.  For more info, see http://www.lyx.org/.
%% Do not edit unless you really know what you are doing.
\documentclass[oneside,english]{amsart}
\usepackage[T1]{fontenc}
\usepackage[latin9]{inputenc}
\usepackage{geometry}
\geometry{verbose,tmargin=3.2cm,bmargin=3.2cm,lmargin=3.2cm,rmargin=3.2cm}
\usepackage{amstext}
\usepackage{amsthm}
\usepackage{amssymb}

\makeatletter
%%%%%%%%%%%%%%%%%%%%%%%%%%%%%% Textclass specific LaTeX commands.
\numberwithin{equation}{section}
\numberwithin{figure}{section}
\theoremstyle{plain}
\newtheorem{thm}{\protect\theoremname}
\theoremstyle{definition}
\newtheorem{defn}[thm]{\protect\definitionname}
\theoremstyle{remark}
\newtheorem{rem}[thm]{\protect\remarkname}
\theoremstyle{definition}
\newtheorem{example}[thm]{\protect\examplename}
\theoremstyle{plain}
\newtheorem{prop}[thm]{\protect\propositionname}
\theoremstyle{plain}
\newtheorem{lem}[thm]{\protect\lemmaname}
\theoremstyle{plain}
\newtheorem{cor}[thm]{\protect\corollaryname}

\makeatother

\usepackage{babel}
\providecommand{\corollaryname}{Corollary}
\providecommand{\definitionname}{Definition}
\providecommand{\examplename}{Example}
\providecommand{\lemmaname}{Lemma}
\providecommand{\propositionname}{Proposition}
\providecommand{\remarkname}{Remark}
\providecommand{\theoremname}{Theorem}

\begin{document}

\title[Path-wise Integration for Fields]{An extension of the sewing lemma to hyper-cubes and hyperbolic equations
driven by multi parameter Young fields }

\author{Fabian A. Harang \\
University of Oslo }

\address{Fabian A. Harang\\
Department of Mathematics, University of Oslo\\
Email: fabianah@math.uio.no}

\keywords{Young integrals, Sewing lemma, random fields, multi-parameter integration,
hyperbolic SPDE's, stochastic partial differential equations, path-wise
integration.\\
$MSC\,\,classification:$ 35L05, 60H15, 60H05, 60H20}

\thanks{$Acknowledgment$: I am very are grateful for all the assistance,
comments and advice I have received from Professor Frank N. Proske
during the work on this project. Also, I would like to thank Professor
Fred E. Benth and Professor Samy Tindel, whose comments greatly improved
this manuscript. }
\begin{abstract}
This article extends the celebrated sewing lemma to multi-parameter
fields on hyper cubes. We use this to construct Young integrals for
multi-parameter Hölder fields on general domains $\left[0,1\right]^{k}$
with $k\geq1$. Moreover, we show existence, uniqueness and stability
of some particular types of hyperbolic PDE's driven by space-time
Hölder noise in a Young regime. 
\end{abstract}

\maketitle
\tableofcontents{}

\section{Introduction }

In recent years, analysis of stochastic processes based on their path-wise
properties rather than their probabilistic properties has received
much attention. Especially after T. Lyons published the celebrated
article on rough paths (\cite{TLyons}) in the late 1990's, this field
of research exploded. He proved that if one can show (by probabilistic
methods) that for a $\alpha-$Hölder continuous stochastic process
$X$ with $\alpha\in(\frac{1}{3},\frac{1}{2}]$ there exists a Lèvy
area (iterated integral) corresponding to $X$ satisfying some algebraic
and analytical conditions, one can give path-wise solutions to SDE's
of the form 
\begin{equation}
dY_{t}=f\left(Y_{t}\right)dX_{t},\label{eq:SDE}
\end{equation}
 where $f$ is sufficiently smooth. This is an important extension
of similar results known when the regularity parameter $\alpha$ of
$X$ is above $\frac{1}{2}$, where Young integration theory can be
used to prove that a unique solution to Equation (\ref{eq:SDE}) exists.
The main advantage of the path-wise approach to stochastic calculus
is that it does not rely upon martingale properties of the driving
signal, but rather depend on its Hölder continuity. \\

The current article was motivated by the construction of a path-wise
rough integration method for stochastic fields in the framework of
the theory of rough paths. However, as we shall see, the study of
multi-parameter processes requires much more detail than the 1D theory
of SDE's, and we therefore have chosen to divide this project into
two parts. This is the first part, and here we will focus on extending
the sewing lemma to general Hölder fields on domains of the form $\left[0,1\right]^{k}$
for $k\geq1$. We will apply this extension to show how we can construct
Young integrals for multi-parameter Hölder fields. At last, we prove
existence and uniqueness of some hyperbolic SPDE's with quite general
boundary conditions , driven by Hölder noise in a Young regime. The
second article is an ongoing work to provide a framework similar to
that of rough paths for fields based on the sewing lemma that we provide
in this article. \\

Consider two stochastic fields $X,Y:\left[0,1\right]^{k}\rightarrow\mathbb{R}$,
with some Hölder continuity assumption in each of its variables, in
the sense that for each $i\in\left\{ 1,...,k\right\} $ we have estimates
of the form 
\begin{equation}
\sup_{\left(x_{1},...,x_{k}\right)\in\left[0,1\right]^{k},y_{i}\in\left[0,1\right]}|X\left(x_{1},...,y_{i},...,x_{k}\right)-X\left(x_{1},...,x_{i},...,x_{k}\right)|\lesssim|y_{i}-x_{i}|^{\alpha_{i}},\label{eq:simple regularity assumption}
\end{equation}
for some $\alpha\in\left(0,1\right)^{k}$ and similarly for $Y$ with
respect to some $\beta\in\left(0,1\right)^{k}$. We are interested
in a path-wise construction of integrals of the form 
\begin{equation}
\int_{x_{1}}^{y_{1}}\cdots\int_{x_{k}}^{y_{k}}Y\left(z_{1},...,z_{k}\right)X\left(dz_{1},...,dz_{k}\right),\label{eq:rough field integration}
\end{equation}
 where the differential (at least when $X$ is smooth ) can be interpreted
as 
\[
X\left(dz_{1},...,dz_{k}\right)=\frac{\partial^{k}}{\partial x_{1},...,\partial x_{k}}X\left(z_{1},...,z_{k}\right)dz_{1},...,dz_{k}.
\]
 Integrals of this form have been studied from a random field point
of view see e.g. \cite{WalshCai}, but also other meanings have been
given to such integrals, e.g. see \cite{OleFredAlmut}. In recent
years, integrals on the form of Equation (\ref{eq:rough field integration})
are often referred to as integration with respect to Ambit fields,
where $X$ usually is chosen to be some multi-parameter Volterra field
of the form 
\[
X\left(y_{1},...,y_{k}\right)=\int_{A\left(y_{1},..,y_{k}\right)}G\left(y_{1}-z_{1},...,y_{k}-z_{k}\right)L\left(dz_{1},...,dz_{k}\right),
\]
 where $L$ is a Lèvy basis, $G$ is a possibly singular Volterra
kernel, and $A\left(y_{1},..,y_{k}\right)$ is some subset of $\left[0,1\right]^{k}$
depending on $\left(y_{1},...,y_{k}\right)$, see for example \cite{BN-Benth-Ped-Ver}.
If $L$ is chosen to be a Brownian sheet, and the resulting field
$X$ satisfy Equation (\ref{eq:simple regularity assumption}) for
some $\alpha\in\left(0,1\right)^{k}$, Young type integration with
respect to $X$ has been developed by Tindel and Quer-Sardanyons in
\cite{TindelQuer} and also by N. Towghi in \cite{Tow} for two dimensions.
That is, both \cite{TindelQuer} and \cite{Tow} shows that for $k=2$
, if $X$ and $Y$ satisfy Equation (\ref{eq:simple regularity assumption})
for Hölder coefficients $\left(\alpha_{1},\alpha_{2}\right)$ and
$\left(\beta_{1},\beta_{2}\right)$ respectively with $\alpha_{1}+\beta_{1}>1$
and $\alpha_{2}+\beta_{2}>1$ then integrals of the form 
\[
\int\int_{\left[s,t\right]\times\left[x,y\right]}Y\left(z_{1},z_{2}\right)X\left(dz_{1},dz_{2}\right)
\]
\[
:=\lim_{|\mathcal{P}\left[s,t\right]\times\mathcal{\tilde{P}}\left[x,y\right]|\rightarrow0}\sum_{\left[u,v\right]\in\mathcal{P}}\sum_{\left[\tilde{u},\tilde{v}\right]\in\mathcal{\tilde{P}}}Y\left(u,\tilde{u}\right)\int_{u}^{v}\int_{\tilde{u}}^{\tilde{v}}X\left(d\mathbf{z}\right),
\]
exists, and behaves like a two parameter extension of the Young integral.
The former authors apply this 2-d Young integral to study the stochastic
wave equation driven by a fractional Brownian sheet, with regularity
above $\frac{1}{2}$ in each variable. Our aim is to extend this integration
from $\left[0,1\right]^{2}$ to $\left[0,1\right]^{k}$ by extending
the Sewing lemma familiar from rough paths theory.

In the seminal paper ``Rough Sheets'' by M. Gubinelli and K. Chouk
(\cite{GubChouk}) from 2014, the authors studied such integration
with the motivation to analyze the stochastic wave equation in a rough
path framework. Their methodology is based on the insightful ``algebraic
integration'' framework developed by Gubinelli in previous works
(see e.g. \cite{Gubinelli,GubinelliTindel} for an introduction).
However, as the authors of \cite{GubChouk} point out; \\
- ``Unfortunately the construction of a rough integral in a multi-parameter
setting gives rise to a very complicated algebraic structure which
is not well understood at the moment.'' \\
 The article is therefore restricted only to the construction of the
integral, and do not deal with the stochastic wave equations, or other
forms of differential equations. Their construction of the integral
in 2D is also based on an extension of the sewing lemma, but proven
using the framework of the algebraic integration theory. Our approach
is based on a direct extension of Lemma 4.2 in \cite{FriHai}. 

The crucial part of this procedure is the construction of a $\delta$
operator acting on functions $\varXi:\left[0,1\right]^{k}\times\left[0,1\right]^{k}\rightarrow\mathbb{R}$,
which is used to prove the existence of an abstract integration map
$\mathcal{I}$ constructed as a limit of a Riemann type sum with respect
to the functions $\varXi$ of $\alpha\in\left(0,1\right)^{k}$ joint-Hölder
continuity. That is, for $\mathbf{x},\mathbf{y}\in\left[0,1\right]^{k}$
let $\mathcal{P}[x_{i},y_{i}]$ be a partition of $\left[x_{i},y_{i}\right]$
for $i\in\left\{ 1,..,k\right\} $ and set 
\[
\mathcal{I}\left(\varXi\right)\left(\mathbf{x},\mathbf{y}\right)=\lim_{|\mathcal{P}\left[\mathbf{x},\mathbf{y}\right]|\rightarrow0}\sum_{\left[\mathbf{u,v}\right]\in\mathcal{P}}\varXi\left(\mathbf{u},\mathbf{v}\right)
\]
\[
=\lim_{|\mathcal{P}\left[x_{1},y_{1}\right]|\vee,...,\vee|\mathcal{P}\left[x_{k},y_{k}\right]|\rightarrow0}\sum_{\left[u_{1},v_{1}\right]\in\mathcal{P}\left[x_{1},y_{1}\right]},...,\sum_{\left[u_{k},v_{k}\right]\in\mathcal{P}\left[x_{k},y_{k}\right]}\varXi\left(u_{1},...,u_{k},v_{1},...,v_{k}\right),
\]
where $\varXi$ is assumed to satisfy some regularity conditions and
algebraic relations with respect to the operator $\delta$. The main
result of this sewing Lemma is that we obtain an inequality of the
form
\[
|\mathcal{I}\left(\varXi\right)_{\mathbf{x},\mathbf{y}}-\varXi\left(\mathbf{x},\mathbf{y}\right)|\lesssim\parallel\delta\varXi\parallel_{\beta;\left[0,1\right]^{k}}q_{\alpha}\left(\mathbf{x},\mathbf{y}\right)p_{\beta-\alpha}\left(\mathbf{x},\mathbf{y}\right);\,\,\,\mathbf{x},\mathbf{y}\in\left[0,1\right]^{k},
\]
where $\beta\in\left(1,\infty\right)^{k}$ and $q_{\alpha}\left(\mathbf{x},\mathbf{y}\right):=\prod_{i=1}^{k}|x_{i}-y_{i}|^{\alpha_{i}},$
and $p_{\beta-\alpha}\left(\mathbf{x},\mathbf{y}\right):=\sum_{i=1}^{k}|x_{i}-y_{i}|^{\beta_{i}-\alpha_{i}}$,
and the Hölder type norm will be defined properly in the next section.
This type of inequality tells us that if the action of $\delta$ on
$\varXi$ is sufficiently regular, then $\varXi$ is a sufficiently
good local approximation of the integral $\mathcal{I}\left(\varXi\right)$.
As a consequence, if we introduce more terms to regularize $\varXi$
it will not affect the limit of the Riemann type sum.\\

If $X$ satisfies inequality (\ref{eq:simple regularity assumption})
for $1\leq i\leq k$ and $\alpha\in(\frac{1}{3},\frac{1}{2}]^{k}$,
then we define the integral in (\ref{eq:rough field integration})
by 
\[
\int_{x_{1}}^{y_{1}}\cdots\int_{x_{k}}^{y_{k}}f\left(X\left(z_{1},...,z_{k}\right)\right)X\left(dz_{1},...,dz_{k}\right):=\mathcal{I}\left(\varXi\right)_{\mathbf{x},\mathbf{y}},
\]
where $\varXi\left(\mathbf{x},\mathbf{y}\right)=f\left(X\left(\mathbf{\mathbf{x}}\right)\right)\square_{\mathbf{x},\mathbf{y}}X$
where $\square_{\mathbf{x},\mathbf{y}}X=\int_{x_{1}}^{y_{1}}\cdots\int_{x_{k}}^{y_{k}}X\left(dz_{1},...,dz_{k}\right)$
denotes the rectangular (or generalized) increment of $X$, which
will be described in detail in the next section.

With the construction of a Young integral, we continue to show existence
on uniqueness of differential equations driven by Hölder fields on
hyper cubes, that is; for $X:\left[0,1\right]^{k}\rightarrow\mathbb{R}$
, we show existence and uniqueness of the equation 
\[
Y\left(\mathbf{x}\right)=\xi\left(\mathbf{x}\right)+\int_{0}^{x_{1}}\cdots\int_{0}^{x_{k}}f\left(Y\left(\mathbf{z}\right)\right)X\left(d\mathbf{z}\right),\,\,\,\mathbf{x}\in\left[0,1\right]^{k},
\]
where $\mathbf{x}\mapsto\xi\left(\mathbf{x}\right)$ is assumed to
be some Hölder contiguous field, and captures the behavior of the
above equation on the boundary. Such equations can be viewed as a
stochastic extension of problems connected to hyperbolic PDE's that
are given by the following mixed derivative equation 
\[
\partial_{x,y}u=b\left(x,y,u,\partial_{x}u,\partial_{y}u\right),\,\,\,\left(x,y\right)\in\left[0,1\right]^{2}
\]
 subject to the boundary conditions 
\[
\begin{array}{cc}
u\left(x,0\right)=f(x)\\
u\left(0,y\right)=g\left(y\right)\\
f(0)=g\left(0\right),
\end{array}
\]
for some sufficiently smooth functions $b,f,$ and $g$. A particularly
simple example would be to choose $b\left(x,y,u,\partial_{x}u,\partial_{y}u\right)=b\left(u\right),$
and by integrating both sides, we obtain 
\[
u\left(x,y\right)=-u(0,0)+f(x)+g(y)+\int_{0}^{x}\int_{0}^{y}b\left(u\left(z_{1},z_{2}\right)\right)dz_{1}dz_{2}.
\]
If $b$ is Lipschitz, classical analysis arguments give us existence
and uniqueness of this equation. 

Furthermore, we can introduce randomness in the above equation by
letting $b$ be multiplied by a space time white noise $\partial_{x,y}W$.
This results in the non-linear (multiplicative) hyperbolic SPDE 
\[
\partial_{x,y}u=b\left(u\right)\partial_{x,y}W,\,\,\,\left(x,y\right)\in\left[0,1\right]^{2},
\]
where we assume that it is subject to the same boundary conditions
as earlier. This equation can be written in integral form 
\[
u\left(x,y\right)=-u\left(0,0\right)+f\left(x\right)+g\left(y\right)+\int_{0}^{x}\int_{0}^{y}b\left(u\left(z_{1},z_{2}\right)\right)W\left(dz_{1},dz_{2}\right).
\]
Such equations have received a lot of attention in probability theory
for the last 30-40 years, and is tightly connected to the wave equation,
as described in \cite{TindelQuer}. For example Walsh studied this
type of integral equation in a classical probability setting, using
the probabilistic properties of the Brownian sheet $W$ (see \cite{WalshCai}).
With the framework presented in the current article, we are able to
study such problems without using probability theory, by considering
equations on the form 
\[
\frac{\partial^{k}}{\partial x_{1},...,\partial x_{k}}u=b\left(u\right)\frac{\partial^{k}}{\partial x_{1},...,\partial x_{k}}X,\,\,\,\left(x,y\right)\in\left[0,1\right]^{k},
\]
where $X\in\mathcal{C}^{\alpha}\left(\left[0,1\right]^{k};\mathbb{R}\right)$
for $\alpha\in\left(\frac{1}{2},1\right)^{k}$ and $\frac{\partial^{k}}{\partial x_{1},...,\partial x_{k}}X$
is understood as a distributional derivative, and we study the mild
form of this equation given by 
\[
u\left(\mathbf{x}\right)=\xi\left(\mathbf{x}\right)+\int_{\mathbf{0}}^{\mathbf{x}}b\left(u\left(\mathbf{z}\right)\right)X\left(d\mathbf{z}\right),
\]
where the boundary information is collected in $\xi$. We want to
note that this type of equations can be viewed as an extension of
the equations studied on $\left[0,1\right]^{2}$ in \cite{TindelQuer}
but with more general boundary.\\

An interesting note when we study Young integration with respect to
fields is that both \cite{TindelQuer,Tow} study Equation (\ref{eq:rough field integration})
in a Young type of setting, and give a definition of this integral
in $\left[0,1\right]^{2}$. This 2D young integral has been used by
P. Friz et. al to give a canonical construction of iterated integrals
for Gaussian processes with sufficiently nice co-variance structure
(see \cite{FriVic,FriHai,Friz2016} for a detailed exposition in this
direction). For example, the fact that one can only construct (canonically)
the iterated integral of an fBm $B^{H}$ with Hurst parameter $H>\frac{1}{4}$
is proved in this manner. This bound is based on the need for a type
of complementary Young regularity of the co-variance function of the
fBm, in the sense that if $Q:\left[0,1\right]^{4}\rightarrow\mathbb{R}$
is the co-variance function of a fBm, then $Q$ is of $2H$ regularity,
and we need that $2H+2H>1$ for the construction of $\int QdQ$ in
a Young regime. If we have a proper construction of $\int QdQ$ we
can bound the variance of $\int B^{H}dB^{h}$ (constructed in a Riemann
type way) by the variation norm of $\int QdQ$. With the construction
of the sewing Lemma in the current article, we believe that a similar
study of the regularity of co-variance functions of Gaussian fields
will make us able to construct iterated integrals (and similar polynomial
concepts as we will discuss later) of said fields. In particular if
$X:\left[0,1\right]^{k}\rightarrow\mathbb{R}^{d}$ is a Gaussian field,
then the co-variance will be $Q:\left[0,1\right]^{2k}\rightarrow\mathbb{R}^{d\times d}$
, and if this function has sufficient regularity, we believe one can
construct an integral of the form $\int QdQ$ in a Young type way
over $\left[0,1\right]^{2k}$. However, we have not yet had the time
to explore this yet, but hope to do this in the future. \\
This article is structured as follows: 
\begin{itemize}
\item Section 1 and 2 give a general introduction to the problem at hand
and the framework we use, as well as notation. We propose a generalization
of the $\delta$ operator known from rough path theory, and show how
we can decompose this operator into more simple operators to use in
analysis later. 
\item Section 3 extends the sewing Lemma and describes how we can use this
to construct a Young integral for general fields of the form $X:\left[0,1\right]^{k}\rightarrow\mathbb{R}$
for $k\geq1$ of some Hölder continuity. 
\item Section 4 is devoted to the existence and uniqueness of differential
equations driven by Young fields with Hölder coefficients $\alpha$
in $\left(\frac{1}{2},1\right)^{k}$. Stability results are provided
with respect to the solution. 
\end{itemize}

\section{Notation and framework}

Throughout this article, we will work on subsets of $\left[0,1\right]^{k}$
for some $k\geq1$. We will mostly work on subsets of the form $\left[\mathbf{x},\mathbf{y}\right]=\prod_{i=1}^{k}\left[x_{i},y_{i}\right]$,
where $\mathbf{x}=\left(x_{1},..,x_{k}\right)\in\left[0,1\right]^{k}$
and similarly for $\mathbf{y}$. We will frequently write for two
elements $\mathbf{x},\mathbf{y}\in\left[0,1\right]^{k}$ that $\mathbf{x}<\mathbf{y}$,
where we mean $x_{i}<y_{i}\forall i=1,..,k$. In the same way, we
may use the notation $\mathbf{x}\leq\mathbf{y}$ for the criterion
$x_{i}\leq y_{i}\forall i=1,..,k$. When working with integration
over hyper cube domains we will call a domain degenerate if one or
more of the intervals which constitutes the hyper cube has zero length.
The reason is that if we have a hyper cube $\left[\mathbf{x},\mathbf{y}\right]=\prod_{i=1}^{k}\left[x_{i},y_{i}\right]$
where for some $1\leq i\leq k$, $x_{i}=y_{i}$ we have formally for
some integrable function $g$ that 
\[
\int_{\left[\mathbf{x},\mathbf{y}\right]}dg=\int_{x_{1}}^{y_{1}}\cdots\int_{x_{i}}^{x_{i}}\cdots\int_{x_{k}}^{y_{k}}dg=0.
\]
Thus, we often specify that the interval of interest $\left[\mathbf{x},\mathbf{y}\right]$
must satisfy $\mathbf{x}<\mathbf{y}$. Most computations in this article
are based on a generalized increment on hyper cubes. We therefore
give the following definition of an operator generating increments
of functions in $\left[0,1\right]^{k}$. 
\begin{defn}
$\left(generalized\,\,\,increments\right)$ For $\mathbf{x}=\left(x_{1},..,x_{k}\right)\in\left[0,1\right]^{k}$
and $\mathbf{y}=\left(y_{1},..,y_{k}\right)\in\left[0,1\right]^{k}$
we define the operator $V_{i,\mathbf{x}}$ for each $i\in\left\{ 1,...,k\right\} $
by 
\[
V_{i,\mathbf{x}}\mathbf{y}=\left(y_{1},..,y_{i-1},x_{i},y_{i+1},..,y_{k}\right).
\]
For a function $f:\left[0,1\right]^{k}\rightarrow\mathbb{R}$ we define
the generalized increments of $f$ from $\mathbf{x}$ to $\mathbf{y}$
(i.e over $\left[\mathbf{x},\mathbf{y}\right]\subset\left[0,1\right]^{k}$)
by 
\[
\square_{\mathbf{x}}^{k}f\left(\mathbf{y}\right)=\prod_{i=1}^{k}\left(I-V_{i,\mathbf{x}}\right)f\left(\mathbf{y}\right),
\]
where $I$ denotes the identity operator, and $V_{i,\mathbf{x}}f\left(\mathbf{y}\right)=f\left(V_{i,\mathbf{x}}\mathbf{y}\right)$.
Moreover, if $f$ is a smooth field, we have 
\[
\int_{\mathbf{x}}^{\mathbf{y}}f\left(d\mathbf{z}\right)=\int_{\mathbf{x}}^{\mathbf{y}}\frac{\partial^{k}}{\partial z_{1},...,\partial z_{k}}f\left(\mathbf{z}\right)d\mathbf{z}=\square_{\mathbf{x}}^{k}f\left(\mathbf{y}\right).
\]
\end{defn}

\begin{rem}
\label{rem:product of increment}It will be convenient for later to
use the notation for product of generalized increments 
\[
\square_{\mathbf{x}}^{n}f\left(\mathbf{y}\right)\square_{\mathbf{\tilde{x}}}^{n}f\left(\tilde{\mathbf{y}}\right)=\prod_{k=1}^{n}\left(I-V_{k,\mathbf{x}}\right)\prod_{k=1}^{n}\left(I-V_{k,\tilde{\mathbf{x}}}\right)f\left(\mathbf{y}\right)f\left(\mathbf{\tilde{y}}\right)
\]
\[
=\square_{\left(\mathbf{x},\mathbf{\tilde{x}}\right)}^{2n}f\left(\mathbf{y}\right)f\left(\mathbf{\tilde{y}}\right),
\]
 i.e. we use $\square_{\left(\mathbf{x},\mathbf{\tilde{x}}\right)}^{2n}=\prod_{k=1}^{2n}\left(I-V_{k,\mathbf{\left(\mathbf{y},\mathbf{\tilde{y}}\right)}}\right)$
acting on functions on $\left(\mathbb{R}^{n}\right)\times\left(\mathbb{R}^{n}\right)$
in the above way. 
\end{rem}

Also notice the inductive property of the generalized increment, that
is 
\[
\square_{\mathbf{x}}^{n}f\left(\mathbf{y}\right)=\prod_{k=1}^{n-1}\left(I-V_{k,\mathbf{x}}\right)f\left(\mathbf{y}\right)-V_{n,\mathbf{x}}\prod_{k=1}^{n-1}\left(I-V_{k,\mathbf{x}}\right)f\left(\mathbf{y}\right)
\]
\[
=\square_{\mathbf{x'}}^{n-1}f\left(\mathbf{y}',y_{n}\right)-\square_{\mathbf{x'}}^{n-1}f\left(\mathbf{y}',x_{n}\right),
\]
which will become very useful in the later sections. 

For a more compact notation, write $\square_{\mathbf{x},\mathbf{y}}^{k}f=\square_{\mathbf{x}}^{k}f\left(\mathbf{y}\right)$
as the $k$ increment of $f$ from $\mathbf{x}$ to $\mathbf{y}.$ 
\begin{example}
Consider a function $f\left(\mathbf{x}\right)=\prod_{i=1}^{k}f_{i}\left(x_{i}\right),$
where $f_{i}:\left[0,1\right]\rightarrow\mathbb{R}$ is differentiable.
Then 
\[
\square_{\mathbf{x},\mathbf{y}}^{k}f=\prod_{i=1}^{k}\left(f_{i}\left(y_{i}\right)-f_{i}\left(x_{i}\right)\right).
\]
Also notice that by the mean value Theorem we have 
\[
\square_{\mathbf{x},\mathbf{y}}^{k}f=\frac{\partial^{k}}{\partial x_{1},...,\partial x_{k}}f\left(\mathbf{c}\right)\prod_{i=1}^{k}\left(y_{i}-x_{i}\right)
\]
 for some $\mathbf{c}\in\left[\mathbf{x},\mathbf{y}\right]$, the
last relation holds of course for general $f:\left[0,1\right]^{k}\rightarrow\mathbb{R}$
which is at least one time differentiable in each variable on $\left[0,1\right]^{k}$. 
\end{example}

There are two ways to consider Hölder continuous functions in several
variables. One could either consider the space of functions which
satisfy a Hölder continuity condition on each sub-interval of a hyper
cube, or we could define it as the space of functions whose generalized
increment $\square$ satisfies some multiplicative Hölder type of
regularity. In the current article we will use both concepts, and
will therefore give a definition of a space which captures both. 
\begin{defn}
\label{def:two-parameter functions h=0000F6lder space} Let $\alpha\in\left(0,1\right)^{k}$
be a multi-index. The space of Hölder continuous fields $\mathcal{C}^{\alpha}\left(\left[0,1\right]^{k};\mathbb{R}\right)$
is defined as all $Y:\left[0,1\right]^{k}\rightarrow\mathbb{R}$ which
satisfy 
\[
\parallel Y\parallel_{\alpha,\left[0,1\right]^{k}}:=\parallel Y\parallel_{\alpha,\square,\left[0,1\right]^{k}}+\sum_{i=1}^{k}\sup_{\mathbf{x}\in\left[0,1\right]^{k}}\parallel Y_{\mathbf{x},i}\parallel_{\alpha_{i};\left[0,1\right]}<\infty,
\]
where $Y_{\mathbf{x},i}=Y\left(x_{1},..,x_{i-1},\cdot,x_{i+1},..,x_{k}\right)$
and
\[
\parallel Y_{\mathbf{x},i}\parallel_{\alpha_{i};\left[0,1\right]}=\sup_{u\neq v\in\left[0,1\right]}\frac{|Y_{\mathbf{x},i}\left(v\right)-Y_{\mathbf{x},i}\left(u\right)|}{|v-u|^{\alpha_{i}}}
\]
\[
\parallel Y\parallel_{\alpha,\square,\left[0,1\right]^{k}}=\sup_{\mathbf{x},\mathbf{y}\in\left[0,1\right]^{k}\setminus D}\frac{|\square_{\mathbf{x},\mathbf{y}}^{k}Y|}{q_{\alpha}\left(\mathbf{x},\mathbf{y}\right)},
\]
where $q_{\alpha}\left(\mathbf{x},\mathbf{y}\right)=\prod_{i=1}^{k}|y_{i}-x_{i}|^{\alpha_{i}}$,
and 
\[
D=\left\{ \mathbf{x}\in\left[0,1\right]^{k}|x_{i}\neq x_{j}\,\,\,for\,\,\,i\neq j\in\left\{ 1,..,k\right\} \right\} .
\]
. 

Also, let us denote by 
\[
\parallel Y\parallel_{\alpha,+,\left[0,1\right]^{k}}=\sum_{i=1}^{k}\sup_{\mathbf{x}\in\left[0,1\right]^{k}}\parallel Y_{\mathbf{x},i}\parallel_{\alpha_{i};\left[0,1\right]},
\]
i.e. the Hölder semi norm, but without the multiplicative regularity
part. 
\end{defn}

From the above we can see that 
\[
\parallel Y\parallel_{\infty}:=\sup_{\mathbf{x\in}\left[0,1\right]^{k}}|Y\left(\mathbf{x}\right)|\leq|Y\left(\mathbf{0}\right)|+\parallel Y\parallel_{\alpha,+,\left[0,1\right]^{k}},
\]
which will become useful later. 
\begin{prop}
\label{prop:Completeness of Calpha}For $Y\in\mathcal{C}^{\alpha}\left(\left[0,1\right]^{k};\mathbb{R}\right)$
the mapping 
\[
Y\mapsto|Y\left(\mathbf{0}\right)|+\parallel Y\parallel_{\alpha,\left[0,1\right]^{k}}
\]
 induces a proper norm on $\mathcal{C}^{\alpha}\left(\left[0,1\right]^{k};\mathbb{R}\right)$
and makes it a Banach space. 
\end{prop}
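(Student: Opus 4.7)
The plan is the classical two-step argument for completeness of H\"older-type spaces: first check the norm axioms, then show any Cauchy sequence converges by combining uniform convergence with a lower-semicontinuity argument applied separately to each constituent seminorm.

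For the norm axioms, non-negativity, absolute homogeneity and the triangle inequality are immediate from the same properties of the supremum and the absolute value, together with linearity of the generalized increment operator $\square^k_{\mathbf{x},\mathbf{y}}$. For positive definiteness, suppose $|Y(\mathbf{0})| + \|Y\|_{\alpha,[0,1]^k} = 0$. Then $Y(\mathbf{0}) = 0$ and, for every index $i$ and every $\mathbf{x}$, the section $Y_{\mathbf{x},i}$ has vanishing $\alpha_i$-H\"older seminorm and is therefore constant in its variable. Changing one coordinate at a time starting from $\mathbf{0}$ shows $Y(\mathbf{x}) = Y(\mathbf{0}) = 0$ for all $\mathbf{x}$.

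For completeness, let $(Y_n)$ be Cauchy. The estimate $\|Y\|_\infty \leq |Y(\mathbf{0})| + \|Y\|_{\alpha,+,[0,1]^k}$ recorded just before the statement shows that $(Y_n)$ is Cauchy in the sup norm, so it converges uniformly to a continuous limit $Y$. Since Cauchy sequences are bounded, $M := \sup_n \bigl(|Y_n(\mathbf{0})| + \|Y_n\|_{\alpha,[0,1]^k}\bigr)$ is finite. For each fixed admissible pair $\mathbf{x},\mathbf{y}$, the increment $\square^k_{\mathbf{x},\mathbf{y}} Y_n$ is a fixed linear combination of the $2^k$ values $Y_n$ at the vertices of $[\mathbf{x},\mathbf{y}]$, hence by pointwise convergence tends to $\square^k_{\mathbf{x},\mathbf{y}} Y$; dividing by $q_\alpha(\mathbf{x},\mathbf{y})$ and taking suprema gives $\|Y\|_{\alpha,\square,[0,1]^k} \leq M$. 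The same argument applied to each section $Y_{\mathbf{x},i}$ gives $\|Y\|_{\alpha,+,[0,1]^k} \leq M$, so $Y \in \mathcal{C}^\alpha([0,1]^k;\mathbb{R})$.

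Norm convergence follows from the same lower-semicontinuity trick applied to the differences. Given $\varepsilon > 0$, pick $N$ with $|Y_n(\mathbf{0}) - Y_m(\mathbf{0})| + \|Y_n - Y_m\|_{\alpha,[0,1]^k} < \varepsilon$ for $n,m \geq N$. Fix $n \geq N$ and send $m \to \infty$ in each difference quotient defining the seminorms of $Y_n - Y_m$, using pointwise convergence $Y_m \to Y$, to conclude $|Y_n(\mathbf{0}) - Y(\mathbf{0})| + \|Y_n - Y\|_{\alpha,[0,1]^k} \leq \varepsilon$. The only real obstacle is bookkeeping: the norm mixes the multiplicative $\square$-seminorm with the $k$ directional sectional H\"older seminorms, so the lower-semicontinuity argument must be carried out for each piece in parallel; conceptually each case reduces to the observation that the relevant increment is a continuous functional of $Y$ under pointwise convergence.
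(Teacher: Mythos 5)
The paper does not actually supply a proof of this proposition; it is stated and immediately followed by the scaling proposition. Your blind attempt therefore has nothing to be compared against in the text, but it is a correct and complete argument of the standard kind. The three checkpoints are all handled properly: positive definiteness by fixing coordinates one at a time from $\mathbf{0}$ using vanishing of the sectional seminorms; pointwise (in fact uniform) convergence of a Cauchy sequence via the recorded bound $\parallel Y\parallel_\infty \le |Y(\mathbf{0})| + \parallel Y\parallel_{\alpha,+,[0,1]^k}$; and membership of the limit in $\mathcal{C}^\alpha$ together with norm convergence by lower semicontinuity of the difference quotients under pointwise convergence, using that $\square^k_{\mathbf{x},\mathbf{y}}$ is a fixed finite linear combination of point evaluations at the $2^k$ vertices of $[\mathbf{x},\mathbf{y}]$. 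This is precisely the argument one would expect the author to have in mind.
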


An important property of the above Hölder semi-norm on $\mathcal{C}^{\alpha}$
is it's ability to scale. The next proposition is a generalization
of exercise 4.24 in \cite{FriHai}, and will become very useful in
later sections.
\begin{prop}
\label{prop:scaleability of H=0000F6lder norms} Let $\mathbf{0}<\mathbf{T}\in\left(0,1\right)^{k}$
and assume that for all $\rho\in\left(\mathbf{0},\mathbf{1}-\mathbf{T}\right)$
the function $f$ satisfies $\parallel f\parallel_{\alpha,\left[\mathbf{\rho},\rho+\mathbf{T}\right]}\leq M$
(uniformly in $\mathbf{\rho}$) for a fixed $\alpha\in\left(0,1\right)^{k}$,
then 
\[
\parallel f\parallel_{\alpha,\left[0,1\right]^{k}}\leq kM2\left(1\vee p_{\alpha-1}\left(\mathbf{0},\mathbf{T}\right)\right),
\]
 where $p_{\gamma}\left(\mathbf{x},\mathbf{y}\right)=\sum_{i=1}^{k}|y_{i}-x_{i}|^{\gamma_{i}}$
. 
\end{prop}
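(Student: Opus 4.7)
The plan is to bound separately the two constituents of the norm $\|f\|_{\alpha,[0,1]^k}$: the directional H\"older seminorm $\|f\|_{\alpha,+,[0,1]^k}$ and the generalised-increment seminorm $\|f\|_{\alpha,\square,[0,1]^k}$. Both parts are handled by the same local-to-global covering trick, i.e.\ the multi-parameter version of the 1D scaling argument of Exercise 4.24 in \cite{FriHai}.

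For the directional contribution, fix $i\in\{1,\ldots,k\}$ and $\mathbf{x}\in[0,1]^k$, and let $0\le u<v\le 1$. If $v-u\le T_i$, one picks $\rho_i$ with $[u,v]\subset[\rho_i,\rho_i+T_i]$ and the hypothesis applied to the slice $f_{\mathbf{x},i}$ gives directly $|f_{\mathbf{x},i}(v)-f_{\mathbf{x},i}(u)|\le M(v-u)^{\alpha_i}$. If $v-u>T_i$, split $[u,v]$ into $N_i:=\lceil(v-u)/T_i\rceil$ equal subintervals of length at most $T_i$, apply the triangle inequality together with the local estimate on each piece, and use $N_i\le 2(v-u)/T_i$ and $v-u\le 1$ to collapse $MN_i^{1-\alpha_i}(v-u)^{\alpha_i}$ into $2M(1\vee T_i^{\alpha_i-1})(v-u)^{\alpha_i}$. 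Taking suprema and summing over $i$, with $\sum_i(1\vee T_i^{\alpha_i-1})\le k(1\vee p_{\alpha-1}(\mathbf{0},\mathbf{T}))$, already yields $\|f\|_{\alpha,+,[0,1]^k}\le 2kM(1\vee p_{\alpha-1}(\mathbf{0},\mathbf{T}))$.

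For the generalised-increment part, fix $\mathbf{x}<\mathbf{y}$ and in each coordinate $i$ insert an equal partition $x_i=z_i^{(0)}<\cdots<z_i^{(N_i)}=y_i$ of mesh at most $T_i$, with $N_i=\lceil(y_i-x_i)/T_i\rceil$, so that every sub-box fits inside a translate of $[\mathbf{0},\mathbf{T}]$. The algebraic key is that the operator $\square^k=\prod_i(I-V_{i,\cdot})$ is additive in each coordinate, so a multilinear telescoping expresses $\square^k_{\mathbf{x},\mathbf{y}}f$ as the sum of the cell contributions, and the hypothesis applied cell by cell bounds the total by $M\,q_\alpha(\mathbf{x},\mathbf{y})\prod_i N_i^{1-\alpha_i}$.

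The main obstacle is the passage from the product $\prod_i N_i^{1-\alpha_i}\le 2^k\prod_i(1\vee T_i^{\alpha_i-1})$ produced above to the additive quantity $1\vee p_{\alpha-1}(\mathbf{0},\mathbf{T})$ claimed in the statement, since the naive product bound is strictly weaker. To close this gap I would proceed by induction on $k$, exploiting the inductive identity $\square^k_{\mathbf{x},\mathbf{y}}f=\square^{k-1}_{\mathbf{x}',\mathbf{y}'}[f(\cdot,y_k)-f(\cdot,x_k)]$ recorded just after the definition of $\square$: at each step only the last coordinate is partitioned, contributing a single factor $2(1\vee T_k^{\alpha_k-1})$, and the remaining $(k-1)$-dimensional increment is controlled by the inductive hypothesis together with the directional seminorm bounds from the previous paragraph. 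Adding the two contributions gives the final constant $2kM(1\vee p_{\alpha-1}(\mathbf{0},\mathbf{T}))$, which is precisely the claim.
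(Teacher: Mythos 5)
Your split of $\|f\|_{\alpha,[0,1]^k}$ into the directional seminorm $\|\cdot\|_{\alpha,+}$ and the $\square$-seminorm is the right starting point, and your treatment of the directional part is correct and exactly the coordinate-wise application of the 1D argument. For the $\square$ part you have identified the genuine difficulty: the obvious covering argument, partitioning each $[x_i,y_i]$ into $N_i=\lceil(y_i-x_i)/T_i\rceil$ pieces and using the additivity of $\square^k$ over the grid, yields
\[
|\square^k_{\mathbf{x},\mathbf{y}}f|\le M\,q_\alpha(\mathbf{x},\mathbf{y})\prod_{i=1}^k N_i^{1-\alpha_i}\le 2^k M\, q_\alpha(\mathbf{x},\mathbf{y})\prod_{i=1}^k T_i^{\alpha_i-1},
\]
a \emph{product} in the $T_i$'s, which for small $\mathbf{T}$ is strictly larger than the stated \emph{sum} $p_{\alpha-1}(\mathbf{0},\mathbf{T})$ (compare $T^{2(\alpha-1)}$ with $2T^{\alpha-1}$ when $T_1=T_2=T$). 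The paper itself gives no argument here, only the one-line citation of Exercise 4.24 in \cite{FriHai}, whose direct $k$-fold extension produces exactly this product; so the stated bound with $p_{\alpha-1}$ rather than $q_{\alpha-1}$ cannot be taken for granted, and you are right to treat it as something that needs proving.

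However, the inductive fix you sketch does not close the gap. After partitioning only the last coordinate and writing $\square^k_{\mathbf{x},\mathbf{y}}f=\sum_{j}\square^{k-1}_{\mathbf{x}',\mathbf{y}'}g_j$ with $g_j(\mathbf{z}')=f(\mathbf{z}',z_k^{(j)})-f(\mathbf{z}',z_k^{(j-1)})$, the inductive hypothesis applied to $g_j$ requires a uniform bound on the full $(k-1)$-dimensional local norm $\|g_j\|_{\alpha',[\rho',\rho'+\mathbf{T}']}$. Its $\square$-component is indeed controlled by $M\,(z_k^{(j)}-z_k^{(j-1)})^{\alpha_k}$, but its directional components $\|g_{j,\mathbf{x}',i}\|_{\alpha_i}$ for $i<k$ are \emph{mixed two-variable increments} $\square^2_{(u,z_k^{(j-1)}),(v,z_k^{(j)})}f$ in coordinates $(i,k)$, and for $k\ge 3$ these are not controlled by the hypothesis at all (Definition \ref{def:two-parameter functions h=0000F6lder space} bounds only the full $k$-fold increment and the single-variable slices). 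The only available crude estimate is $|\square^2_{(u,z_k^{(j-1)}),(v,z_k^{(j)})}f|\le 2M|v-u|^{\alpha_i}$, which lacks the factor $(z_k^{(j)}-z_k^{(j-1)})^{\alpha_k}$; summing over $j$ then produces a factor $N_k$ that does not telescope, and the argument collapses back to a product-type bound. So the step ``the remaining $(k-1)$-dimensional increment is controlled by the inductive hypothesis together with the directional seminorm bounds'' is precisely where a new idea is required, and as written it is not available. You should either supply a mechanism that extracts a decaying factor in $z_k^{(j)}-z_k^{(j-1)}$ from the directional seminorms of $g_j$, or accept the product bound $2^k M(1\vee q_{\alpha-1}(\mathbf{0},\mathbf{T}))$ for the $\square$-part (which is in fact all that is needed downstream in Theorem \ref{thm:Ambit differential equation}, where the proposition is used only to glue finitely many local solutions together).
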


\begin{proof}
This is a simple extension of exercise 4.24 in \cite{FriHai}. 
\end{proof}
Earlier we introduced a generalized increment $\square$ ``lifting''
functions from $C\left(\left[0,1\right]^{k}\right)$ to $C\left(\left[0,1\right]^{k}\times\left[0,1\right]^{k}\right)$.
We will also need an operator lifting functions from $C\left(\left[0,1\right]^{k}\times\left[0,1\right]^{k}\right)$
to $C\left(\left[0,1\right]^{k}\times\left[0,1\right]^{k}\times\left[0,1\right]^{k}\right)$.
The idea is familiar to the classical theory of rough paths, and we
will give a closer description of this operator in the next definition.
\begin{defn}
\label{def:delta operator}Consider a function $f:\left[0,1\right]^{k}\times\left[0,1\right]^{k}\rightarrow\mathbb{R}$
and define the operator 
\[
V_{i,\mathbf{v},\mathbf{z}}\left(\mathbf{x},\mathbf{y}\right):=\left(\left(x_{1},..,x_{i-1},v_{i},x_{i+1},..,x_{k}\right),\left(y_{1},..,y_{i-1},z_{i},y_{i+1},..,y_{k}\right)\right)
\]
 and let $V_{i,\mathbf{v},\mathbf{z}}f\left(\mathbf{x},\mathbf{y}\right)=f\left(V_{i,\mathbf{v},\mathbf{z}}\left(\mathbf{x},\mathbf{y}\right)\right)$
(notice that $V$ is the same $V$ we have used before for increments,
but extended to $\left[0,1\right]^{k}\times\left[0,1\right]^{k}$).
The Chen operator (or delta $\left(\delta\right)$ operator) 
\[
\delta^{\theta}:C\left(\left[0,1\right]^{k}\times\left[0,1\right]^{k}\right)\rightarrow C\left(\left[0,1\right]^{k}\times\left[0,1\right]^{k}\times\left[0,1\right]^{k}\right)
\]
 around the points $\mathbf{x},\mathbf{y}\in\left[0,1\right]^{k}$
centered at $\mathbf{z\in}\left[\mathbf{x},\mathbf{y}\right]$ is
defined by 
\[
\delta_{\mathbf{z}}^{\theta}f\left(\mathbf{x},\mathbf{y}\right)=\prod_{i\in\theta}\left(I-\psi_{i}\left(\mathbf{z}\right)\right)f\left(\mathbf{x},\mathbf{y}\right),
\]
where $\psi_{i}\left(\mathbf{z}\right)=V_{i,\mathbf{x},\mathbf{z}}+V_{i,\mathbf{z},\mathbf{y}},$
and $\theta$ is a set of length $1\leq l\leq k$ of elements from
$\left\{ 1,..,k\right\} ,$ i.e. 
\[
\theta=\left\{ \theta_{1},..,\theta_{l}\right\} \in\left\{ 1,..,k\right\} ^{l}.
\]
Also, for some $i\in\left\{ 1,..,k\right\} $, we can choose $\theta=i$
and we get the special case  
\[
\delta_{\mathbf{z}}^{\left(i\right)}f\left(\mathbf{x},\mathbf{y}\right)=\left(I-V_{i,\mathbf{x},\mathbf{z}}-V_{i,\mathbf{z},\mathbf{y}}\right)f\left(\mathbf{x},\mathbf{y}\right)
\]
\[
=f\left(\mathbf{x},\mathbf{y}\right)-f\left(\mathbf{x},\left(y_{1},..,z_{i},..,y_{k}\right)\right)-f\left(\left(x_{1},..,z_{i},..,x_{k}\right),\mathbf{y}\right),
\]
which is the delta operator from Rough Path theory with respect to
the variables $x_{i}<z_{i}<y_{i}$ on some domain $\left[0,1\right].$
\end{defn}

Throughout the article, we will often omit the dependence on $\theta$,
when $\theta=\left\{ 1,..,k\right\} $ which can be considered the
canonical choice of $\delta$ for a hyper-cube $\left[0,1\right]^{k}$.
In other words, we often write $\delta_{\mathbf{z}}:=\delta_{\mathbf{z}}^{\theta}$
when $\theta=\left\{ 1,..,k\right\} $. \\
Next we will present a way to decompose the delta operator into a
sum of $\delta$ operators in only one variable. As this Lemma is
a crucial step in later calculations, we will try to give a very detailed
proof, such that the ideas come out clear, and the proof is transparent. 
\begin{lem}
\label{lem:Deomposition of delta}Let $\delta^{\theta}$ be defined
as above with $\theta=\left\{ 1,..,k\right\} $. Then 
\[
\delta^{\theta}=\sum_{\tilde{\theta}\subset\theta}\left(-1\right)^{|\tilde{\theta}|-1}\sum_{i\in\tilde{\theta}}\prod_{j\in\tilde{\theta}\setminus\left\{ \tilde{\theta}{}_{1},...,\tilde{\theta}_{i}\right\} }\psi_{j}\delta^{\left(i\right)},
\]
where $|\tilde{\theta}|$ is the number of elements in $\tilde{\theta}$,
and where the $\tilde{\theta}$ represents the $2^{k}-1$ possible
different subsets of $\theta$ (excluding $\left\{ \emptyset\right\} $),
and $\tilde{\theta}_{i}$ is the $i'$th component in $\tilde{\theta}.$ 
\end{lem}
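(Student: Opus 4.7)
The plan is to expand $\delta^{\theta} = \prod_{i=1}^{k}(I-\psi_{i})$ via the usual inclusion-exclusion-style binomial expansion, then for each resulting product of $\psi$'s perform a telescoping sum using $\delta^{(i)} = I - \psi_{i}$. The crucial algebraic observation, which makes everything tractable, is that $\psi_{i}$ and $\psi_{j}$ \emph{commute} for $i\neq j$: by the definition of $V_{i,\mathbf{v},\mathbf{z}}$, $\psi_{i}(\mathbf{z})$ only substitutes the $i$-th coordinate of $\mathbf{x}$ and $\mathbf{y}$, while $\psi_{j}(\mathbf{z})$ only touches the $j$-th coordinate, so any ordering of the factors yields the same operator.

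First I would write, using commutativity,
\[
\delta^{\theta}=\prod_{i=1}^{k}(I-\psi_{i})=\sum_{\tilde{\theta}\subset\theta}(-1)^{|\tilde{\theta}|}\prod_{j\in\tilde{\theta}}\psi_{j},
\]
where the $\tilde{\theta}=\emptyset$ term is understood as $I$. Second, fix a non-empty $\tilde{\theta}$ with its elements listed in increasing order as $\tilde{\theta}_{1}<\cdots<\tilde{\theta}_{m}$ (with $m=|\tilde{\theta}|$) and set $B_{p}:=\prod_{q>p}\psi_{\tilde{\theta}_{q}}$, taking $B_{m}=I$. Because $B_{p-1}=B_{p}\,\psi_{\tilde{\theta}_{p}}$, we obtain $B_{p-1}-B_{p}=-B_{p}\,\delta^{(\tilde{\theta}_{p})}$, so a telescoping sum from $p=1$ to $m$ gives
\[
\prod_{j\in\tilde{\theta}}\psi_{j}=B_{0}=I-\sum_{p=1}^{m}\Bigl(\prod_{q>p}\psi_{\tilde{\theta}_{q}}\Bigr)\,\delta^{(\tilde{\theta}_{p})}=I-\sum_{p=1}^{m}\prod_{j\in\tilde{\theta}\setminus\{\tilde{\theta}_{1},\ldots,\tilde{\theta}_{p}\}}\psi_{j}\,\delta^{(\tilde{\theta}_{p})}.
\]

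Third, I would plug this back into the expansion of $\delta^{\theta}$. The contribution of the leading $I$ in each telescoped sum is $\sum_{\tilde{\theta}\subset\theta}(-1)^{|\tilde{\theta}|}\,I=(1-1)^{k}\,I=0$, so all the $I$ terms disappear (this is where the $\tilde{\theta}=\emptyset$ term is absorbed). What remains is, after moving the sign,
\[
\delta^{\theta}=\sum_{\emptyset\neq\tilde{\theta}\subset\theta}(-1)^{|\tilde{\theta}|-1}\sum_{p=1}^{|\tilde{\theta}|}\prod_{j\in\tilde{\theta}\setminus\{\tilde{\theta}_{1},\ldots,\tilde{\theta}_{p}\}}\psi_{j}\,\delta^{(\tilde{\theta}_{p})},
\]
which is exactly the claim (interpreting the summation index $i$ in the statement as the position $p$ in the ordered listing of $\tilde{\theta}$, so that $\delta^{(i)}$ stands for $\delta^{(\tilde{\theta}_{p})}$).

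The only genuine obstacle is book-keeping rather than substance: getting the telescoping identity in the right direction so that the surviving $\psi$-factors are indexed by $\tilde{\theta}\setminus\{\tilde{\theta}_{1},\ldots,\tilde{\theta}_{p}\}$ (the elements sitting after position $p$), as the statement requires. Since the $\psi_{j}$ commute, choosing to telescope from the right rather than from the left is harmless, but it is precisely this choice that produces the particular expression written in the lemma. Once the expansion, telescoping and the vanishing of the $I$-terms are set up, everything assembles immediately.
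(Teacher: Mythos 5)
Your proof is correct, and at its core it takes the same route as the paper: expand $\prod_{i\in\theta}(I-\psi_{i})$ by inclusion–exclusion, use commutativity of the $\psi_{j}$'s, and apply the telescoping identity
\[
I-\prod_{i\in\gamma}a_{i}=\sum_{i\in\gamma}(I-a_{i})\prod_{j\in\gamma\setminus\{\gamma_{1},\ldots,\gamma_{i}\}}a_{j}
\]
(Equation (\ref{eq:identity}) in the paper) to convert each product of $\psi_{j}$'s into a sum over $\delta^{(i)}$'s. Where you diverge is in the bookkeeping used to dispose of the identity terms. The paper writes $\delta^{\theta}=I+\sum_{\tilde{\theta}}\prod(-\psi_{i})$, splits the sum over nonempty subsets into those of even and odd cardinality, inserts $\#\gamma$ and $\#\gamma^{c}$ copies of $I$ respectively to force the $I-\prod\psi$ pattern, and then relies on the count $\#\gamma^{c}-\#\gamma=1$ to cancel the extraneous identities. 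You instead keep $\tilde{\theta}=\emptyset$ in the sum, telescope first, and observe that all the leading $I$'s cancel in one stroke because $\sum_{\tilde{\theta}\subseteq\theta}(-1)^{|\tilde{\theta}|}=(1-1)^{k}=0$. That is a genuine streamlining: it removes the even/odd case distinction and the explicit insertion of identity operators, and it makes the cancellation of the $I$-terms transparent rather than a consequence of subset counting. You also supply the justification for commutativity of the $\psi_{i}$'s, which the paper uses implicitly; making that explicit is worthwhile since the telescoping direction (``from the right'') is exactly what produces the index set $\tilde{\theta}\setminus\{\tilde{\theta}_{1},\ldots,\tilde{\theta}_{p}\}$ in the statement.
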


\begin{proof}
Let us write $\delta^{\theta}=\prod_{i\in\theta}\left(I-\psi_{i}\right)$,
where $\psi_{i}\left(\mathbf{z}\right)f\left(\mathbf{x},\mathbf{y}\right)=\left(V_{i,\mathbf{x},\mathbf{z}}+V_{i,\mathbf{z,\mathbf{y}}}\right)f\left(\mathbf{x},\mathbf{y}\right)$
for some $f\in C\left(\left[0,1\right]^{2k}\right)$. As the proof
is of a more technical nature, we prove the claim first for $k=2$
to get an intuition about the result. Consider $\delta^{\left(1,2\right)}$
from Definition \ref{def:delta operator}, and see that 
\[
\delta^{\left(1,2\right)}=\delta^{\left(1\right)}\delta^{\left(2\right)}=\left(I-\psi_{1}\right)\left(I-\psi_{2}\right)
\]
\[
=I-\psi_{1}I-\psi_{2}I+\psi_{1}\psi_{2}.
\]
Now add and subtract $I$ to the RHS above, and use that
\[
2\left(I-\psi_{1}\psi_{2}\right)=\delta^{\left(1\right)}+\delta^{\left(2\right)}\psi_{2}+\delta^{\left(1\right)}+\delta^{\left(2\right)}\psi_{1}
\]
to find that 
\[
\delta^{\left(1,2\right)}=\left(I-\psi_{1}\right)+\left(I-\psi_{2}\right)-\left(I-\psi_{1}\psi_{2}\right)
\]
\[
=\delta^{\left(1\right)}+\delta^{\left(2\right)}-\frac{1}{2}\left(\delta^{\left(1\right)}+\delta^{\left(1\right)}\psi_{2}+\delta^{\left(2\right)}+\delta^{\left(2\right)}\psi_{1}\right)
\]
\[
=\frac{1}{2}\left(\delta^{\left(1\right)}+\delta^{\left(2\right)}-\delta^{\left(1\right)}\psi_{2}-\delta^{\left(2\right)}\psi_{1}\right),
\]
thus for $k=2$ we can decompose $\delta^{\left(1,2\right)}$ into
delta operators in each variable, i.e $\delta^{\left(1\right)}$ and
$\delta^{\left(2\right)}$.

Let us also show that it holds for $\delta^{\left(1,2,3\right)}$,
by using another technique. By direct computations of $\delta^{\left(1,2,3\right)}$,
it is easily seen that 
\[
\delta^{\left(1,2,3\right)}=\delta^{\left(3\right)}\delta^{\left(1,2\right)}=\left(I-\psi_{3}\right)\left(I-\psi_{1}I-\psi_{2}I+\psi_{1}\psi_{2}\right)
\]
\[
=I-\psi_{3}-\left(I-\psi_{3}\right)\psi_{1}-\left(I-\psi_{3}\right)\psi_{2}+\left(I-\psi_{3}\right)\psi_{1}\psi_{2}
\]
\[
=I-\psi_{3}-\psi_{2}-\psi_{1}+\psi_{1}\psi_{3}+\psi_{3}\psi_{2}+\psi_{1}\psi_{2}-\psi_{1}\psi_{2}\psi_{3}
\]
\[
=I+\sum_{\tilde{\theta}\subset\left\{ 1,2,3\right\} }\prod_{j\in\tilde{\theta}}\left(-\psi_{j}\right),
\]
 where there are $2^{3}-1=7$ possible subsets $\tilde{\theta}\subset\theta=\left\{ 1,2,3\right\} $
i.e. $\tilde{\theta}=\left\{ 1\right\} ,$$\tilde{\theta}=\left\{ 2\right\} ,...,\tilde{\theta}=\left\{ 1,2,3\right\} $.
Now, there are $3$ subsets which consists of an even number of elements
in them, i.e $\left\{ 1,2,\right\} ,$$\left\{ 2,3\right\} ,\left\{ 1,3\right\} $
and $4$ subsets which consists of an odd number of elements i.e.
$\left\{ 1\right\} ,\left\{ 2\right\} ,\left\{ 3\right\} ,\left\{ 1,2,3\right\} $.
We can therefore divide the above sum into the negative indices and
the positive ones, i.e. write
\[
\sum_{\tilde{\theta}\subset\left\{ 1,2,3\right\} }\prod_{j\in\tilde{\theta}}\left(-\psi_{j}\right)=\sum_{\gamma\subset\left\{ 1,2,3\right\} }\prod_{j\in\gamma}\psi_{j}-\sum_{\gamma^{c}\subset\left\{ 1,2,3\right\} }\prod_{j\in\gamma^{c}}\psi_{j}
\]
where $\gamma$ denotes all subsets of $\left\{ 1,2,3\right\} $ with
an even number of elements and $\gamma^{c}$denotes the subsets of
$\left\{ 1,2,3\right\} $ of odd number of elements. Consider now
$\sum_{\gamma\subset\left\{ 1,2,3\right\} }\prod_{j\in\gamma}\psi_{j}$
and add $\#\gamma=3$ (where $\#\gamma$ means the number of subsets
$\gamma$) identity operators and see that 
\[
\sum_{\gamma\subset\left\{ 1,2,3\right\} }\prod_{j\in\gamma}\psi_{j}=3I-\sum_{\gamma\subset\left\{ 1,2,3\right\} }\left(I-\prod_{j\in\gamma}\psi_{j}\right).
\]
 From the simple identity $I-a\tilde{a}=\left(I-a\right)+\left(I-\tilde{a}\right)a$
we can derive by induction the general identity 
\[
I-\prod_{i\in\gamma}a_{i}=\left(I-a_{\gamma_{1}}\right)+\left(I-\prod_{i\in\gamma\setminus\left\{ \gamma_{1}|\right\} }a_{i}\right)a_{\gamma_{1}}
\]
\[
=\left(I-a_{\gamma_{1|}}\right)+\left(I-a_{\gamma_{2}}\right)a_{\gamma_{1}}+\left(I-\prod_{i\in\gamma\setminus\left\{ \gamma_{1},\gamma_{2}\right\} }a_{i}\right)a_{\gamma_{2}}a_{\gamma_{1}},
\]
\begin{equation}
=\sum_{i\in\gamma}\left(I-a_{i}\right)\prod_{j\in\gamma\setminus\left\{ \gamma{}_{1},...,\gamma_{i}\right\} }a_{j},\label{eq:identity}
\end{equation}
 where $\gamma\setminus\left\{ \gamma{}_{1},...,\gamma_{i}\right\} =\left\{ \gamma_{k}\in\gamma|\gamma_{k}\neq\gamma_{1},...,\gamma_{i}\right\} $.
Using this identity for $\sum_{\gamma\subset\left\{ 1,2,3\right\} }\left(I-\prod_{j\in\gamma}\psi_{j}\right)$,
we can see that 
\[
\sum_{\gamma\subset\left\{ 1,2,3\right\} }\prod_{j\in\gamma}\psi_{j}=3I-\sum_{\gamma\subset\left\{ 1,2,3\right\} }\sum_{i\in\gamma}\left(I-\psi_{i}\right)\prod_{j\in\gamma\setminus\left\{ \gamma{}_{1},...,\gamma_{|\gamma|-i}\right\} }\psi_{j}
\]
\[
=3I-\sum_{\gamma\subset\left\{ 1,2,3\right\} }\sum_{i\in\gamma}\delta^{\left(i\right)}\prod_{j\in\gamma\setminus\left\{ \gamma{}_{1},...,\gamma_{i}\right\} }\psi_{j}.
\]
By the same method using that there are $4$ subsets $\gamma^{c}$,
we can see that the negative part 
\[
\sum_{\gamma^{c}\subset\left\{ 1,2,3\right\} }\prod_{j\in\gamma^{c}}\psi_{j},=4I-\sum_{\gamma^{c}\subset\left\{ 1,2,3\right\} }\sum_{i\in\gamma^{c}}\delta^{\left(i\right)}\prod_{j\in\gamma^{c}\setminus\left\{ \gamma^{c}{}_{1},...,\gamma_{i}^{c}\right\} }\psi_{j}.
\]
Combining the two calculations, we get 
\[
I+\sum_{\gamma^{c}\subset\left\{ 1,2,3\right\} }\sum_{\gamma\subset\left\{ 1,2,3\right\} }\prod_{j\in\gamma}\psi_{j}-\sum_{\gamma^{c}\subset\left\{ 1,2,3\right\} }\prod_{j\in\gamma^{c}}\psi_{j}
\]
\[
=I+3I-\sum_{\gamma\subset\left\{ 1,2,3\right\} }\sum_{i\in\gamma}\delta^{\left(i\right)}\prod_{j\in\gamma\setminus\left\{ \gamma{}_{1},...,\gamma_{i}\right\} }\psi_{j}
\]
\[
-4I+\sum_{\gamma^{c}\subset\left\{ 1,2,3\right\} }\sum_{i\in\gamma^{c}}\delta^{\left(i\right)}\prod_{j\in\gamma^{c}\setminus\left\{ \gamma^{c}{}_{1},...,\gamma_{i}^{c}\right\} }\psi_{j}
\]
\[
=\sum_{\tilde{\theta}\subset\left\{ 1,2,3\right\} }\left(-1\right)^{\left(|\tilde{\theta}|-1\right)}\sum_{i\in\tilde{\theta}}\delta^{\left(i\right)}\prod_{j\in\tilde{\theta}\setminus\left\{ \tilde{\theta}{}_{1},...,\tilde{\theta}_{i}\right\} }\psi_{j},
\]
where $|\tilde{\theta}|$ denotes the number of elements in $\tilde{\theta}$. 

Now we will consider $k\geq1$ with $\theta=\left\{ 1,...,k\right\} $.
By multiplying out the product $\prod_{i\in\theta}\left(I-\psi_{i}\right)$,
we obtain that 

\[
\prod_{i\in\theta}\left(I-\psi_{i}\right)=I+\sum_{\tilde{\theta}}\prod_{i\in\tilde{\theta}}\left(-\psi_{i}\right).
\]
 Note that the sum $\sum_{\tilde{\theta}}\prod_{i\in\tilde{\theta}}\left(-\psi_{i}\right)$
consists of $2^{k}-1$ components of the form $\prod_{i\in\tilde{\theta}}\left(-\psi_{i}\right)$.
Divide this sum into a negative and positive part, i.e 
\[
\sum_{\tilde{\theta}\subset\theta}\prod_{i\in\tilde{\theta}}\left(-\psi_{i}\right)=\sum_{\gamma\subset\theta}\prod_{i\in\gamma}\psi_{i}-\sum_{\gamma^{c}\subset\theta}\prod_{i\in\gamma^{c}}\psi_{i}
\]
 See that $\#\gamma^{c}=2^{k-1}$, and $\#\gamma=2^{k-1}-1$ . Using
the Equation \ref{eq:identity} we find
\[
I-\prod_{i\in\gamma}\psi_{i}=\sum_{i\in\gamma}\delta^{\left(i\right)}\prod_{j\in\gamma\setminus\left\{ \gamma{}_{1},...,\gamma_{i}\right\} }\psi_{j},
\]
we can calculate that 
\[
\sum_{\gamma}\prod_{i\in\gamma}\psi_{i}=\#\gamma I-\sum_{\gamma\subset\theta}\left(I-\prod_{i\in\gamma}\psi_{i}\right)
\]
\[
=\#\gamma I-\sum_{\gamma\subset\theta}\sum_{i\in\gamma}\delta^{\left(i\right)}\prod_{j\in\gamma\setminus\left\{ \gamma{}_{1},...,\gamma_{i}\right\} }\psi_{j}.
\]
 The same can be found for the negative part, 
\[
\sum_{\gamma^{c}\subset\theta}\prod_{i\in\gamma^{c}}\psi_{i}=\#\gamma^{c}I-\sum_{\gamma^{c}\subset\theta}\sum_{i\in\gamma^{c}}\delta^{\left(i\right)}\prod_{j\in\gamma^{c}\setminus\left\{ \gamma^{c}{}_{1},...,\gamma_{i}^{c}\right\} }\psi_{j}
\]

Combining the two, we can see that 
\[
I-\sum_{\tilde{\theta}\subset\theta}\prod_{i\in\tilde{\theta}}\left(-\psi_{i}\right)=\sum_{\tilde{\theta}\subset\theta}\sum_{i\in\tilde{\theta}}\left(-1\right)^{|\tilde{\theta}|-1}\delta^{\left(i\right)}\prod_{j\in\tilde{\theta}\setminus\left\{ \tilde{\theta}{}_{1},...,\tilde{\theta}_{i}\right\} }\psi_{j},
\]
 where the summation over $\tilde{\theta}$ on the RHS is meant over
all possible subsets of $\theta=\left\{ 1,...,k\right\} ,$ and $\left(-1\right)^{|\tilde{\theta}|-1}$
is either $+1$ or $-1$ depending on $\tilde{\theta}.$ At last we
obtain that we can decompose $\delta^{\theta}$ as 
\[
\delta^{\theta}=\sum_{\tilde{\theta}\subset\theta}\sum_{i\in\tilde{\theta}}\left(-1\right)^{|\tilde{\theta}|-1}\delta^{\left(i\right)}\prod_{j\in\tilde{\theta}\setminus\left\{ \tilde{\theta}{}_{1},...,\tilde{\theta}_{i}\right\} }\psi_{j}.
\]
\end{proof}
It is important to note that the product of operators can be written
shortly as $\delta^{\left(i\right)}\psi_{\tilde{\theta}}^{i}:=\delta^{\left(i\right)}\prod_{j\in\tilde{\theta}\setminus\left\{ \tilde{\theta}{}_{1},...,\tilde{\theta}_{i}\right\} }\psi_{j}$.
The operator $\psi_{\tilde{\theta}}^{i}$ is acting on functions $f:\left[0,1\right]^{2k}\rightarrow\mathbb{R}$
and is shuffling around the variable at positions above $i$ , and
is therefore not ``changing'' the regularity of the function. 

\begin{rem}
\label{rem:Gubi/Chouk DIfference in delta}We want to point out that
the above delta operator and the one defined by Gubinelli and Chouk
\cite{GubChouk} are exactly the same when $k=2$, and it can directly
be seen as a generalization of their work. Consider a function $f:\left[0,1\right]^{2}\times\left[0,1\right]^{2}\rightarrow\mathbb{R}$,
and write $\psi_{i}\left(\mathbf{z}\right)=\left(V_{i,\mathbf{\cdot},\mathbf{z}}+V_{i,\mathbf{z},\mathbf{\cdot}}\right)$
for $i=1,2$; the delta operator $\delta$ considered by Gubinelli
and Chouk is then given by 
\[
\delta_{\mathbf{z}}^{\left(1,2\right)}:=\left(I-\psi_{1}\left(\mathbf{z}\right)\right)\left(I-\psi_{2}\left(\mathbf{z}\right)\right)
\]
\[
=\frac{1}{2}\left(\delta^{\left(1\right)}+\delta^{\left(2\right)}-\delta^{\left(1\right)}\psi_{2}\left(\mathbf{z}\right)-\delta^{\left(2\right)}\psi_{1}\left(\mathbf{z}\right)\right).
\]
\end{rem}

We will be interested in knowing the regularity of the delta operator
when acting on functions $\varXi\in\mathcal{C}_{2}^{\alpha}\left(\left[0,1\right]^{k};\mathbb{R}\right)$,
and we can use the decomposition of the $\delta$ operator from Lemma
\ref{lem:Deomposition of delta} to find a criterion for when the
inequality 
\[
|\delta_{\mathbf{z}}\varXi\left(\mathbf{x},\mathbf{y}\right)|\lesssim q_{\alpha}\left(\mathbf{x},\mathbf{y}\right)p_{\beta-\alpha}\left(\mathbf{x},\mathbf{y}\right)
\]
 is satisfied for $p_{\beta}\left(\mathbf{x},\mathbf{y}\right)=\sum_{i=1}^{k}|y_{i}-x_{i}|^{\beta_{i}}$
and $q_{\alpha}\left(\mathbf{x},\mathbf{y}\right)=\prod_{i=1}^{k}|x_{i}-y_{i}|^{\alpha_{i}}.$
\begin{lem}
\label{lem:sufficient condition}Let $\beta\in\left(1,\infty\right)^{k}$
and $\theta=\left\{ 1,..,k\right\} $. If $\varXi:\left[0,1\right]^{k}\times\left[0,1\right]^{k}\rightarrow\mathbb{R}$
and $\varXi$ satisfies 
\[
|\delta_{\mathbf{z}}^{\left(i\right)}\varXi\left(\mathbf{x},\mathbf{y}\right)|\lesssim q_{\left(\alpha_{1},..,\beta_{i},...,\alpha_{k}\right)}\left(\mathbf{x},\mathbf{y}\right)
\]
for all $i\in\theta$, then 
\[
|\delta_{\mathbf{z}}^{\theta}\varXi\left(\mathbf{x},\mathbf{y}\right)|\lesssim q_{\alpha}\left(\mathbf{x},\mathbf{y}\right)p_{\beta-\alpha}\left(\mathbf{x},\mathbf{y}\right).
\]
\end{lem}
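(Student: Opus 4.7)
The plan is to reduce the estimate on $\delta^{\theta}$ to the hypothesised one-variable estimates by invoking the decomposition of Lemma \ref{lem:Deomposition of delta}. Writing
\[
\delta_{\mathbf{z}}^{\theta}\varXi(\mathbf{x},\mathbf{y})=\sum_{\tilde\theta\subset\theta}(-1)^{|\tilde\theta|-1}\sum_{i\in\tilde\theta}\delta_{\mathbf{z}}^{(i)}\psi_{\tilde\theta}^{i}\varXi(\mathbf{x},\mathbf{y}),\qquad \psi_{\tilde\theta}^{i}=\prod_{j\in\tilde\theta\setminus\{\tilde\theta_{1},\dots,\tilde\theta_{i}\}}\psi_{j},
\]
the triangle inequality reduces everything to bounding each summand $|\delta_{\mathbf{z}}^{(i)}\psi_{\tilde\theta}^{i}\varXi(\mathbf{x},\mathbf{y})|$.

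The crucial observation is that, for every $j$ appearing in the product $\psi_{\tilde\theta}^{i}$, one has $j\neq i$; indeed the index $i$ has been removed from $\tilde\theta\setminus\{\tilde\theta_{1},\dots,\tilde\theta_{i}\}$ by construction. Since $\psi_{j}(\mathbf{z})$ replaces either $x_{j}$ by $z_{j}$ or $y_{j}$ by $z_{j}$ and leaves all other coordinates untouched, applying $\psi_{\tilde\theta}^{i}$ to $\varXi(\mathbf{x},\mathbf{y})$ produces a sum of at most $2^{|\tilde\theta|-1}$ evaluations $\varXi(\tilde{\mathbf{x}},\tilde{\mathbf{y}})$ with the properties $\tilde x_{i}=x_{i}$, $\tilde y_{i}=y_{i}$, and $[\tilde x_{l},\tilde y_{l}]\subset[x_{l},y_{l}]$ for every $l\neq i$.

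Applying the hypothesis $|\delta_{\mathbf{z}}^{(i)}\varXi(\tilde{\mathbf{x}},\tilde{\mathbf{y}})|\lesssim q_{(\alpha_{1},\dots,\beta_{i},\dots,\alpha_{k})}(\tilde{\mathbf{x}},\tilde{\mathbf{y}})$ term by term, and using that $|\tilde y_{l}-\tilde x_{l}|\le|y_{l}-x_{l}|$ for $l\neq i$ while $|\tilde y_{i}-\tilde x_{i}|=|y_{i}-x_{i}|$, one obtains
\[
|\delta_{\mathbf{z}}^{(i)}\psi_{\tilde\theta}^{i}\varXi(\mathbf{x},\mathbf{y})|\lesssim |y_{i}-x_{i}|^{\beta_{i}}\prod_{l\neq i}|y_{l}-x_{l}|^{\alpha_{l}}=|y_{i}-x_{i}|^{\beta_{i}-\alpha_{i}}\,q_{\alpha}(\mathbf{x},\mathbf{y}).
\]
Summing over $i\in\tilde\theta$ and over the $2^{k}-1$ subsets $\tilde\theta\subset\theta$ absorbs a purely combinatorial constant $C(k)$ and produces
\[
|\delta_{\mathbf{z}}^{\theta}\varXi(\mathbf{x},\mathbf{y})|\lesssim q_{\alpha}(\mathbf{x},\mathbf{y})\sum_{i=1}^{k}|y_{i}-x_{i}|^{\beta_{i}-\alpha_{i}}=q_{\alpha}(\mathbf{x},\mathbf{y})\,p_{\beta-\alpha}(\mathbf{x},\mathbf{y}),
\]
which is the claimed bound.

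The only delicate point is the first paragraph's \emph{bookkeeping}: making sure that the auxiliary operator $\psi_{\tilde\theta}^{i}$ really never touches the $i$-th slot, so that the regularity gained by the one-variable estimate is precisely in the right coordinate and the remaining coordinates are controlled by monotonicity $[\tilde x_{l},\tilde y_{l}]\subset[x_{l},y_{l}]$. Once this is in place, the rest of the argument is a triangle inequality and a rewriting of $|y_{i}-x_{i}|^{\beta_{i}}\prod_{l\neq i}|y_{l}-x_{l}|^{\alpha_{l}}$ as $|y_{i}-x_{i}|^{\beta_{i}-\alpha_{i}}q_{\alpha}(\mathbf{x},\mathbf{y})$, with no further analytic content.
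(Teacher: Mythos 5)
Your proof is correct and follows the same route as the paper: invoke the decomposition of Lemma \ref{lem:Deomposition of delta}, observe that the operator $\psi_{\tilde\theta}^{i}$ touches only coordinates other than $i$ (so each resulting sub-interval is contained in the original one and $|y_{i}-x_{i}|$ is unchanged), apply the one-variable hypothesis term by term, and sum the resulting $|y_{i}-x_{i}|^{\beta_{i}-\alpha_{i}}q_{\alpha}(\mathbf{x},\mathbf{y})$ over $i$ to obtain $q_{\alpha}\,p_{\beta-\alpha}$. If anything you make the argument more transparent: the paper's proof stops at the bound $Cq_{(\alpha_{1},\dots,\beta_{i},\dots,\alpha_{k})}(\mathbf{x},\mathbf{y})$ without carrying out the final rewriting into $q_{\alpha}\,p_{\beta-\alpha}$ or stating the monotonicity of the sub-intervals $[\tilde x_{l},\tilde y_{l}]\subset[x_{l},y_{l}]$ that justifies it, both of which you supply.
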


\begin{proof}
From Lemma \ref{lem:Deomposition of delta} we know that for $\mathbf{z\in\left[x,y\right]}$
\[
|\delta_{\mathbf{z}}^{\theta}\varXi\left(\mathbf{x},\mathbf{y}\right)|=|\sum_{\tilde{\theta}\subset\theta}\sum_{i\in\tilde{\theta}}\left(-1\right)^{|\tilde{\theta}|-1}\prod_{j\in\tilde{\theta}\setminus\left\{ \tilde{\theta}{}_{1},...,\tilde{\theta}_{i}\right\} }\psi_{j}\delta^{\left(i\right)}\varXi\left(\mathbf{x},\mathbf{y}\right)|
\]
\[
\leq\parallel\delta^{\left(i\right)}\varXi\parallel_{\left(\alpha_{1},..,\beta_{i},...,\alpha_{k}\right),\square}
\]
\[
\times\sum_{\tilde{\theta}\subset\theta}\sum_{i\in\tilde{\theta}}\left(-1\right)^{|\tilde{\theta}|-1}\prod_{j\in\tilde{\theta}\setminus\left\{ \tilde{\theta}{}_{1},...,\tilde{\theta}_{i}\right\} }\psi_{j}\left(\mathbf{z}\right)q_{\left(\alpha_{1},..,\beta_{i},...,\alpha_{k}\right)}\left(\mathbf{x},\mathbf{y}\right),
\]
where 
\[
\parallel\delta^{\left(i\right)}\varXi\parallel_{\left(\alpha_{1},..,\beta_{i},...,\alpha_{k}\right),\square}=\sup_{\mathbf{x}\neq\mathbf{y}\in\left[0,1\right]^{k}}\frac{|\delta_{\mathbf{z}}^{\left(i\right)}\varXi\left(\mathbf{x},\mathbf{y}\right)|}{q_{\left(\alpha_{1},..,\beta_{i},...,\alpha_{k}\right)}\left(\mathbf{x},\mathbf{y}\right)}.
\]
Notice that the operator $\prod_{j\in\tilde{\theta}\setminus\left\{ \tilde{\theta}{}_{1},...,\tilde{\theta}_{|\tilde{\theta}|-i}\right\} }\psi_{j}\left(\mathbf{z}\right)$
is just changing the variables which is lower than $|\tilde{\theta}|-i$,
therefore 
\[
\sum_{\tilde{\theta}\subset\theta}\sum_{i\in\tilde{\theta}}\left(-1\right)^{|\tilde{\theta}|-1}\prod_{j\in\tilde{\theta}\setminus\left\{ \tilde{\theta}{}_{1},...,\tilde{\theta}_{i}\right\} }\psi_{j}\left(\mathbf{z}\right)q_{\left(\alpha_{1},..,\beta_{i},...,\alpha_{k}\right)}\left(\mathbf{x},\mathbf{y}\right)
\]
\[
\leq Cq_{\left(\alpha_{1},..,\beta_{i},...,\alpha_{k}\right)}\left(\mathbf{x},\mathbf{y}\right).
\]
 
\end{proof}
The next Lemma will prove another important property; remember that
$\square$ is the generalized increment operation, and one wants the
Chen operator $\delta$ and the increment $\square$ to act in a similar
manner as they do in classical rough paths theory, i.e. the $\delta$
operator acting on the generalized increment is $0$.
\begin{lem}
\label{lem:Delta on increment}If $X\in C\left(\left[0,1\right]^{k};\mathbb{R}\right)$,
then for $\theta=\left\{ 1,..,k\right\} $ we have 
\[
\delta_{\mathbf{z}}^{\theta}\square_{\mathbf{x},\mathbf{y}}^{k}X=0.
\]
\end{lem}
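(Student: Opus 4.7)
The plan is to reduce the multidimensional statement to the one-dimensional identity $\delta^{(i)}_{\mathbf{z}}\square^{k}_{\mathbf{x},\mathbf{y}} X = 0$, and then invoke Lemma~\ref{lem:Deomposition of delta} to conclude.

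First I would single out one coordinate $i \in \theta$ and use the inductive formula for the generalized increment noted just after Remark~\ref{rem:product of increment}, adapted to pull out the $i$-th variable rather than the $k$-th:
\[
\square^{k}_{\mathbf{x},\mathbf{y}} X \;=\; g(y_i) - g(x_i),
\qquad g(t) := \square^{k-1}_{\mathbf{x}_{-i},\mathbf{y}_{-i}}\bigl[X(\,\cdot\,)\bigl|_{\text{$i$-th coord}=t}\bigr],
\]
so that, as a function of $(\mathbf{x},\mathbf{y})$, the increment depends on $x_i$ and $y_i$ only through the difference $g(y_i)-g(x_i)$, with the remaining coordinates entering only inside $g$. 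Since $\delta^{(i)}_{\mathbf{z}}$ only substitutes the $i$-th coordinate of $\mathbf{x}$ and of $\mathbf{y}$ by $z_i$ (leaving $\mathbf{x}_{-i},\mathbf{y}_{-i}$ untouched), a direct computation gives the telescoping
\[
\delta^{(i)}_{\mathbf{z}}\square^{k}_{\mathbf{x},\mathbf{y}} X
= \bigl[g(y_i)-g(x_i)\bigr] - \bigl[g(z_i)-g(x_i)\bigr] - \bigl[g(y_i)-g(z_i)\bigr] = 0.
\]
This is the core identity and it holds for every $i \in \theta$ and every admissible $\mathbf{x},\mathbf{y},\mathbf{z}$.

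Next I would apply Lemma~\ref{lem:Deomposition of delta}, which expresses
\[
\delta^{\theta} = \sum_{\tilde{\theta}\subset\theta}(-1)^{|\tilde{\theta}|-1}\sum_{i\in\tilde{\theta}} \prod_{j\in\tilde{\theta}\setminus\{\tilde{\theta}_{1},\ldots,\tilde{\theta}_{i}\}}\psi_{j}\,\delta^{(i)}.
\]
Crucially, in each summand the operator $\delta^{(i)}$ stands on the right and therefore acts \emph{first} on $\square^{k}_{\mathbf{x},\mathbf{y}} X$. By the identity established in the previous paragraph, each such inner action produces $0$, and since the $\psi_{j}$'s are mere variable-substitution operators they send $0$ to $0$. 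Summing over all $\tilde{\theta}$ and $i$ yields $\delta^{\theta}_{\mathbf{z}}\square^{k}_{\mathbf{x},\mathbf{y}} X = 0$.

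The only subtle point — and the step I would write most carefully — is justifying the inductive splitting in an arbitrary coordinate $i$ (not just $i=k$); by the symmetry of the definition $\square^{k}_{\mathbf{x}} = \prod_{i=1}^{k}(I-V_{i,\mathbf{x}})$ the factors commute, so one may legitimately factor out $(I-V_{i,\mathbf{x}})$ for any chosen $i$, which is what licenses the reduction to the one-variable telescoping. No estimate or regularity of $X$ is needed beyond continuity, so no real analytic obstacle arises; the proof is purely algebraic.
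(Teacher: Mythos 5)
Your proof is correct. The key identity $\delta^{(i)}_{\mathbf{z}}\square^{k}_{\mathbf{x},\mathbf{y}}X = 0$ for each single index $i$ is exactly the same one the paper establishes; they compute it as
$(I-V_{i,\mathbf{x},\mathbf{z}}-V_{i,\mathbf{z},\mathbf{y}})\square^{k}_{\mathbf{x},\mathbf{y}}X
= \square^{k}_{\mathbf{x},\mathbf{y}}X - (\square^{k}_{\mathbf{x},V_{i,\mathbf{z}}\mathbf{y}}X + \square^{k}_{V_{i,\mathbf{z}}\mathbf{x},\mathbf{y}}X) = 0$,
which is your telescoping argument written with the $V$-operators rather than via the auxiliary function $g$. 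Where you diverge is the final step: you invoke Lemma~\ref{lem:Deomposition of delta} to expand $\delta^{\theta}$ as a sum of terms each ending in $\delta^{(i)}$, while the paper simply uses the defining product form $\delta^{\theta}_{\mathbf{z}} = \prod_{i\in\theta}(I-V_{i,\mathbf{x},\mathbf{z}}-V_{i,\mathbf{z},\mathbf{y}})$ and observes that since any one factor annihilates $\square^{k}_{\mathbf{x},\mathbf{y}}X$ and the remaining factors are linear, the whole product does too. Both routes are valid, but the paper's is lighter: the decomposition lemma you use was itself derived from that same product form, so appealing to it here is a detour. If you keep your route, it is worth noting explicitly that $\delta^{(i)}$ sits rightmost in every summand of the decomposition and hence acts first, which is what makes the annihilation propagate; you do say this, and it is the correct thing to flag.
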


\begin{proof}
We write $\mathbf{z}'=\left(z_{1},..,z_{k-1}\right)$ when $\mathbf{z\in}\left[0,1\right]^{k}$
to signify the removal of the $i$'th variable. First, we observe
that 
\[
\left(I-V_{i,\mathbf{x},\mathbf{z}}-V_{i,\mathbf{z},\mathbf{y}}\right)\square_{\mathbf{x},\mathbf{y}}^{k}X=\square_{\mathbf{x},\mathbf{y}}^{k}X-\left(\square_{\mathbf{x},V_{i,\mathbf{z}}\mathbf{y}}^{k}X+\square_{V_{i,\mathbf{z}}\mathbf{x},\mathbf{y}}^{k}X\right),
\]
we can easily see that for $\theta=\left\{ 1,...,k\right\} $ we have
\[
\square_{\mathbf{x},V_{i,\mathbf{z}}\mathbf{y}}^{k}X+\square_{V_{i,\mathbf{z}}\mathbf{x},\mathbf{y}}^{k}X=
\]
\[
\prod_{j\in\theta\setminus\left\{ i\right\} }\left(I-V_{j,\mathbf{x}}\right)\left(\left(I-V_{i,\mathbf{x}}\right)X\left(V_{i,\mathbf{z}}\mathbf{y}\right)+\left(I-V_{i,V_{i,\mathbf{z}}\mathbf{x}}\right)X\left(\mathbf{y}\right)\right),
\]
\[
=\square_{\mathbf{x},\mathbf{y}}^{k}X
\]
 and therefore it is clear that 
\[
\left(I-V_{i,\mathbf{x},\mathbf{z}}-V_{i,\mathbf{z},\mathbf{y}}\right)\square_{\mathbf{x},\mathbf{y}}^{k}X=0,
\]
and furthermore 
\[
\delta_{\mathbf{z}}^{\theta}\square_{\mathbf{x},\mathbf{y}}^{k}X=\prod_{i\in\theta}\left(I-V_{i,\mathbf{x},\mathbf{z}}-V_{i,\mathbf{z},\mathbf{y}}\right)\square_{\mathbf{x},\mathbf{y}}^{k}X=0.
\]
\end{proof}
In the next section we will see how we can use the $\delta$-operator
to prove a generalized sewing Lemma, which then can be used to construct
Young integration for fields.

\section{Sewing Lemma and Young integration for fields \label{sec:Sewing-lemma}}

We will begin this section with a motivation of the construction of
a sewing map for fields on $\left[0,1\right]^{k}$. Let us first discuss
Riemann sums of functions on hyper-cube domains. We are interested
in constructing $k-$fold integrals of the form $\int_{x_{1}}^{y_{1}}\cdots\int_{x_{k}}^{y_{k}}$,
and in the case of smooth $f$ and $g$ on $\left[0,1\right]^{k}$
this $k-$fold integral can be written as 
\[
\int_{x_{1}}^{y_{1}}\cdots\int_{x_{k}}^{y_{k}}f\left(z_{1},...,z_{k}\right)\frac{\partial^{k}}{\partial z_{1},...,\partial z_{k}}g\left(z_{1},...,z_{k}\right)dz_{1},...,dz_{k}
\]
\begin{equation}
=\lim_{|\mathcal{P}^{1}|\rightarrow0},...,\lim_{|\mathcal{P}^{k}|\rightarrow0}\sum_{\left[u_{1},v_{1}\right]\in\mathcal{P}^{1}}\cdots\sum_{\left[u_{1},v_{1}\right]\in\mathcal{P}^{k}}f\left(u_{1},...,u_{k}\right)\label{eq:Riemann sum on grids}
\end{equation}
\[
\times\int_{u_{1}}^{v_{1}}\cdots\int_{u_{k}}^{v_{k}}\frac{\partial^{k}}{\partial z_{1},...,\partial z_{k}}g\left(z_{1},...,z_{k}\right)dz_{1},...,dz_{k}.
\]
Each of the partitions $\mathcal{P}^{1},...,\mathcal{P}^{k}$ are
made from a division of the respective intervals $\left[x_{1},y_{1}\right],...,\left[x_{k},y_{k}\right].$
Therefore, we could consider $grid-like$ partitions of $\left[0,1\right]^{k}$
by a cartesian product of the partitions $\mathcal{P}^{1},...,\mathcal{P}^{k}$.
In this way, we can write a grid like partition $\mathcal{P}\left[\mathbf{x},\mathbf{y}\right]$
of $\prod_{i=1}^{k}\left[x_{i},y_{i}\right]$ as the product 
\[
\mathcal{P}=\prod_{i\in\theta}\mathcal{P}^{i}\left[x_{i},y_{i}\right],
\]
where $\theta=\left\{ 1,..,k\right\} $ and where each element $\left[\mathbf{u},\mathbf{v}\right]\in\mathcal{P}\left[\mathbf{x},\mathbf{y}\right]$
is of the form $\prod_{i=1}^{k}\left[u_{i},v_{i}\right]$ for $\left[u_{i},v_{i}\right]\in\mathcal{P}^{i}$
for $i\in\left\{ 1,...,k\right\} $. If we also write $\int_{\mathbf{x}}^{\mathbf{y}}=\int_{x_{1}}^{y_{1}}\cdots\int_{x_{k}}^{y_{k}}$
as the integral over a hyper cube $\left[\mathbf{x},\mathbf{y}\right]\subset\left[0,1\right]^{k}$,
we can write Equation (\ref{eq:Riemann sum on grids}) in a more compact
form 
\[
\int_{\mathbf{x}}^{\mathbf{y}}f\left(\mathbf{z}\right)D^{\gamma}g\left(\mathbf{z}\right)d\mathbf{z}=\lim_{|\mathcal{P}|\rightarrow0}\sum_{\left[\mathbf{u},\mathbf{v}\right]\in\mathcal{P}}f\left(\mathbf{u}\right)\int_{\mathbf{u}}^{\mathbf{v}}D^{\gamma}g\left(\mathbf{z}\right)d\mathbf{z},
\]
where $D^{\gamma}=\frac{\partial^{k}}{\partial z_{1},...,\partial z_{k}}$
is the multi-index notation of partial derivatives with $\gamma=\left(1,..,1\right)$
(i.e. $1$, $k$-times). Moreover, for a hyper cube, $\left[\mathbf{u,v}\right]$
we define by 
\[
|\left[\mathbf{u,v}\right]|:=\sup_{j=1,..,k}|v_{j}-u_{j}|
\]
Furthermore define the size of the mesh of the partition in the following
way 
\[
|\mathcal{P}\left[\mathbf{x},\mathbf{y}\right]|:=\sup_{\left[\mathbf{u},\mathbf{v}\right]\in\mathcal{P}\left[\mathbf{x},\mathbf{y}\right]}\left\{ |\left[\mathbf{\mathbf{u}},\mathbf{v}\right]|\right\} .
\]

\begin{rem}
In \cite{FrizVIC2011} the authors discuss variation norms of $2D$
functions. One of the observations discussed in the paper is that
grid-like partitions as defined above is not an exhaustive definition
of partitions on hyper cubes, and they show that the study of functions
in a grid like setting in some $\rho-$variation norm is equivalent
to study $\rho+\epsilon$ variation on general partitions of hyper
cubes for some small $\epsilon>0$. From this, we are tempted to believe
that the integral we construct in over grid like partitions, may be
translated to integrals over general partitions at the cost of some
small loss of Hölder regularity. However, we have not checked this
but leave this for future investigations. \\
\end{rem}

Before moving to the theorem of the sewing map, we must first give
a definition of an appropriate space of abstract ``Riemann integrands''
satisfying some regularity conditions. With the decomposition of the
$\delta$-operator in mind, we give the following definition. 
\begin{defn}
Let $\alpha\in\left(0,1\right)^{k},$ $\beta\in\left(1,\infty\right)^{k}$
and $\theta=\left\{ 1,..,k\right\} $. The space $\mathcal{C}_{2}^{\alpha,\beta,\theta}\left(\left[0,1\right]^{2k};\mathbb{R}\right)$
is defined by all functions $\varXi:\left[0,1\right]^{2k}\rightarrow\mathbb{R},$
such that 
\[
\parallel\varXi\parallel_{\alpha,\beta}:=\parallel\varXi\parallel_{\alpha}+\parallel\delta^{\theta}\varXi\parallel_{\beta}
\]
\[
:=\sup_{\mathbf{x}\neq\mathbf{y}\in\left[0,1\right]^{k}}\frac{|\varXi\left(\mathbf{x},\mathbf{y}\right)|}{q_{\alpha}\left(\mathbf{x},\mathbf{y}\right)}+\sum_{i\in\theta}\sup_{\left(\mathbf{x},\mathbf{y}\right)\in\triangle\left(\left[0,1\right]^{k}\right);z_{i}\in\left[x_{i},y_{i}\right]}\frac{|\delta_{z_{i}}^{\left(i\right)}\varXi_{i,\mathbf{x},\mathbf{y}}\left(u,v\right)|}{q_{\left(\alpha_{1},..,\beta_{i},..,\alpha_{k}\right)}\left(\mathbf{x},\mathbf{y}\right)}<\infty,
\]
 where $q_{\gamma}\left(\mathbf{x},\mathbf{y}\right):=\prod_{i=1}^{k}|y_{i}-x_{i}|^{\gamma_{i}}$
for $\gamma\in\mathbb{R}_{+}^{k}$. 
\end{defn}

By the original ideas of Young, the proof of the sewing Lemma in rough
path theory is based on the following property: let $\mathcal{P}\left[x,y\right]$
be a partition of $\left[x,y\right]$ and for some point $z\in\left[x,y\right]$
dividing two sets $\left[z_{-},z\right],\left[z,z^{+}\right]\in\mathcal{P}\left[x,y\right]$
define 
\[
\mathcal{P}\left[x,y\right]\setminus\left\{ z\right\} :=\mathcal{P}\left[x,y\right]\setminus\left\{ \left[z^{-},z\right],\left[z,z^{+}\right]\right\} \cup\left\{ \left[z^{-},z^{+}\right]\right\} .
\]
 We can then study the difference between the two Riemann type sums
\begin{equation}
\sum_{\left[u,v\right]\in\mathcal{P}\left[x,y\right]\setminus\left\{ z\right\} }\varXi\left(u,v\right)-\sum_{\left[u,v\right]\in\mathcal{P}\left[x,y\right]}\varXi\left(u,v\right)\label{eq:Rieman sum pluss delta equation}
\end{equation}
\[
=\varXi\left(z_{-},z_{+}\right)-\varXi\left(z_{-},z\right)-\varXi\left(z,z_{+}\right),
\]
where $z_{-}$ is the largest point in $\mathcal{P}\left[x,y\right]$
smaller than $z$ and $z_{+}$ the smallest point larger than $z$.
The next lemma can be viewed as a direct extension of the sewing lemma
proved by Gubinelli and Chouk for 2D fields in \cite{GubChouk}. 
\begin{lem}
$\left(Sewing\,\,\,lemma\right)$ \label{thm:Sewing map}Consider
a hyper cube $\text{\ensuremath{\left[\mathbf{x},\mathbf{y}\right]}}\subset\left[0,1\right]^{k}$
for $\mathbf{x}\neq\mathbf{y}$ and let $\varXi\in\mathcal{C}_{2}^{\alpha,\beta,\theta}\left(\left[0,1\right]^{k};\mathbb{R}\right)$.
Then there exists a unique continuous map $\mathcal{I}:\mathcal{C}_{2}^{\alpha,\beta,\theta}\left(\left[0,1\right]^{k};\mathbb{R}\right)\rightarrow\mathcal{C}_{+}^{\alpha}\left(\left[0,1\right]^{k};\mathbb{R}\right)$
such that 
\[
|\mathcal{I}\left(\varXi\right)_{\mathbf{x},\mathbf{y}}-\varXi\left(\mathbf{x},\mathbf{y}\right)|\leq C_{\beta,k}\parallel\delta^{\theta}\varXi\parallel_{\beta}q_{\alpha}\left(\mathbf{x},\mathbf{y}\right)p_{\beta-\alpha}\left(\mathbf{x},\mathbf{y}\right),
\]
where $p_{\beta}\left(\mathbf{x},\mathbf{y}\right)=\sum_{i=1}^{k}|y_{i}-x_{i}|^{\beta_{i}}.$
\end{lem}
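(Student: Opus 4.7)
My approach follows the classical Young/sewing paradigm of Lemma 4.2 in \cite{FriHai}, extended to hyper-cube domains by tracking one-point removals along each of the $k$ axes in turn. The plan is first to define
\[
\mathcal{I}(\varXi)_{\mathbf{x},\mathbf{y}} := \lim_{|\mathcal{P}|\to 0} S_{\mathcal{P}}(\varXi), \qquad S_{\mathcal{P}}(\varXi) := \sum_{[\mathbf{u},\mathbf{v}]\in\mathcal{P}}\varXi(\mathbf{u},\mathbf{v}),
\]
as a limit of Riemann sums over grid-like partitions $\mathcal{P}=\prod_{i=1}^k\mathcal{P}^i$ of $[\mathbf{x},\mathbf{y}]$, and then to deduce the stated a priori bound by telescoping successive point removals back to the trivial partition $\{[\mathbf{x},\mathbf{y}]\}$, whose sum is exactly $\varXi(\mathbf{x},\mathbf{y})$.

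The combinatorial core is the identity that if $\mathcal{P}'$ is obtained from $\mathcal{P}$ by deleting a single point $z^i_\star\in\mathcal{P}^i$ with neighbours $z^i_-<z^i_+$, then
\[
S_{\mathcal{P}}(\varXi)-S_{\mathcal{P}'}(\varXi) = -\sum_{[\mathbf{u}',\mathbf{v}']\in\prod_{j\ne i}\mathcal{P}^j} \delta^{(i)}_{z^i_\star}\varXi\bigl((\mathbf{u}',z^i_-),(\mathbf{v}',z^i_+)\bigr),
\]
which by the hypothesis $\varXi\in\mathcal{C}_2^{\alpha,\beta,\theta}$ yields
\[
|S_{\mathcal{P}}(\varXi)-S_{\mathcal{P}'}(\varXi)| \le \|\delta^{\theta}\varXi\|_\beta\, |z^i_+-z^i_-|^{\beta_i}\prod_{j\ne i}\sum_{[u_j,v_j]\in\mathcal{P}^j}|v_j-u_j|^{\alpha_j}.
\]
I would then iterate these removals one axis at a time. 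Within axis $i$ I would invoke Young's pigeonhole trick: if $N_i$ interior points remain in $\mathcal{P}^i$, at least one of them has adjacent intervals of total length at most $2(y_i-x_i)/(N_i-1)$; since $\beta_i>1$ the series $\sum_n n^{-\beta_i}$ converges and the full axis-$i$ contribution is bounded by $C_{\beta_i}\|\delta^\theta\varXi\|_\beta\,(y_i-x_i)^{\beta_i}$ times the cross-sectional product. The cross-sectional factor is then collapsed by processing the axes sequentially: axes whose removals are already complete contribute only $(y_j-x_j)^{\alpha_j}$, while for axes still pending an induction on the number of active directions (guided by the decomposition in Lemma~\ref{lem:Deomposition of delta} and the repackaging in Lemma~\ref{lem:sufficient condition}) assembles the $k$ one-variable estimates into the desired shape $q_\alpha(\mathbf{x},\mathbf{y})\,p_{\beta-\alpha}(\mathbf{x},\mathbf{y})$.

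Uniqueness of $\mathcal{I}(\varXi)$ then follows because any two candidate maps differ by a quantity dominated by $q_\alpha p_{\beta-\alpha}$ with $\beta_i-\alpha_i>0$ in every coordinate; summing this estimate over a partition and letting the mesh tend to zero forces the difference to vanish pointwise. The main obstacle will be controlling the cross-sectional sums: for $\alpha_j<1$ the quantity $\sum_{\mathcal{P}^j}|v_j-u_j|^{\alpha_j}$ is \emph{not} bounded by $(y_j-x_j)^{\alpha_j}$ and can diverge along a refining sequence, so the axis-wise Young removals have to be interleaved in a carefully chosen order balancing the $\beta_i>1$ gains against the potentially divergent $\alpha_j<1$ cross-section factors. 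Lemma~\ref{lem:Deomposition of delta} is precisely the combinatorial device that reduces this apparently intertwined $k$-dimensional estimate to $k$ separate scalar applications of the classical sewing lemma, with Lemma~\ref{lem:Delta on increment} ensuring consistency when $\varXi$ is replaced by a genuine generalised increment.
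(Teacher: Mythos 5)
The central idea of your plan---remove one partition point at a time along a single axis, estimate the resulting discrepancy, and repeat---does not survive the obstacle you yourself flag at the end: after removing a point $z^i_\star$ from $\mathcal{P}^i$ the error is multiplied by the cross-sectional factor $\prod_{j\ne i}\sum_{[u_j,v_j]\in\mathcal{P}^j}|v_j-u_j|^{\alpha_j}$, and for $\alpha_j<1$ this sum grows like $N_j^{1-\alpha_j}(y_j-x_j)^{\alpha_j}$ as $\mathcal{P}^j$ is refined. No reordering or interleaving of axis-wise removals removes that divergence, because the very first removal already incurs the full cross-sectional cost in every other axis whose partition is still fine. Your proposed rescue via Lemma~\ref{lem:Deomposition of delta} and Lemma~\ref{lem:sufficient condition} does not apply here either: those lemmas decompose the operator $\delta^{\theta}$ and verify the regularity hypothesis on $\varXi$, but they say nothing about how to telescope the Riemann sums; they cannot turn the $k$-dimensional sum over grid boxes into $k$ independent scalar sewings.

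The paper avoids this by not doing the removals sequentially. Instead it removes a point simultaneously along all $k$ axes, which, because $I_{\mathcal{P}^{(i)}\setminus\{z_i\}}=I_{\mathcal{P}^{(i)}}+\delta^{(i)}_{z_i}$, identifies the elementary discrepancy not with a single $\delta^{(i)}$ but with the full product $\prod_{i\in\theta}\delta^{(i)}_{z_i^{j_i}}$. This gives
\[
\prod_{i\in\theta}\bigl(Id-I_{\mathcal{P}^{(i)}}\bigr)\varXi=\sum_{j_1,\dots,j_k=1}^{r}\Bigl(\prod_{i\in\theta}\delta^{(i)}_{z_i^{j_i}}\Bigr)\varXi,
\]
where every summand already carries the $q_\alpha p_{\beta-\alpha}$ structure (each axis contributes its own $\beta_i$-gain), so Young's pigeonhole applied coordinate-wise yields a product of convergent zeta factors $\prod_i\zeta(\beta_i)$ and no divergent $\alpha_j$-cross-section ever appears. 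The quantity one actually needs to control, $I_{\mathcal{P}^\theta}-Id$, is then recovered from this product and the lower-dimensional operators $I_{\mathcal{P}^\gamma}-Id$ for $\gamma\subsetneq\theta$ via an explicit inclusion--exclusion identity,
\[
\prod_{i\in\theta}\bigl(I_{\mathcal{P}^{(i)}}-Id\bigr)=I_{\mathcal{P}^\theta}+(-1)^{|\theta|}Id+\sum_{\gamma\subsetneq\theta}(-1)^{|\gamma|}I_{\mathcal{P}^\gamma},
\]
and an induction on $|\theta|$. That inclusion--exclusion plus simultaneous telescoping is the missing mechanism in your write-up; without it the axis-by-axis scheme cannot close.
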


\begin{proof}
We follow the ideas used in \cite{FriHai} Lemma 4.2, but generalize
to fields. As for uniqueness, assume $J$ and $\tilde{J}$ are two
candidates for $\mathcal{I}\left(\varXi\right)$, then both satisfy
the above inequality, which imply that $\left(J-\tilde{J}\right)\left(\mathbf{0}\right)=0$
and $\left(J-\tilde{J}\right)\left(\mathbf{x},\mathbf{y}\right)\lesssim q_{\alpha}\left(\mathbf{x},\mathbf{y}\right)p_{\beta-\alpha}\left(\mathbf{x},\mathbf{y}\right)$
for $\beta\in\left(1,\infty\right)^{k}$, which tells us that $J-\tilde{J}=0$. 

Let $\mathcal{P}^{i}\left[x_{i},y_{i}\right]$ denote a partition
of $\left[x_{i},y_{i}\right]$ containing $r\geq2$ number of sets
$\forall i\in\left\{ 1,...,k\right\} $. From Lemma 4.2 in \cite{FriHai}
we know that there exists a point $z_{i}$ in $\left[x_{i},y_{i}\right]$
with two corresponding sets $\left[z_{i}^{-},z_{i}\right],\left[z_{i},z_{i}^{+}\right]\in\mathcal{P}^{i}\left[x_{i},y_{i}\right]$
and the following is satisfied
\begin{equation}
|z_{i}^{+}-z_{i}^{-}|\leq\frac{2}{r-1}|y_{i}-x_{i}|,\label{eq:partition division in point inequlaity}
\end{equation}
We will continue the proof in two steps, first we show a maximal inequality
for partition integrals, and then we show non-dependence of the chosen
partition we have used in the integral. The proof is done by induction,
i.e we will show that the claim holds for $\theta=\left\{ 1,2\right\} $
and then generalize to $\theta=\left\{ 1,...,k\right\} $. 

$\left(1\right)$~~~~Let us write $I_{\mathcal{P}^{\theta}}=\sum_{\left[\mathbf{u},\mathbf{v}\right]\in\mathcal{P}^{\theta}}$
as the partition integral (Riemann sum) over some partition defined
by $\mathcal{P}^{\theta}\left[\mathbf{x},\mathbf{y}\right]=\prod_{i\in\theta}\mathcal{P}^{i}\left[x_{i},y_{i}\right]$,
i.e for $\varXi\in\mathcal{C}_{2}^{\alpha,\beta,\theta}\left(\left[0,1\right]^{k};\mathbb{R}\right)$,
we write 
\[
I_{\mathcal{P}^{\theta}\left[\mathbf{x},\mathbf{y}\right]}\varXi:=\sum_{\left[\mathbf{u},\mathbf{v}\right]\in\mathcal{P}^{\theta}}\varXi\left(\mathbf{u},\mathbf{v}\right)
\]
\[
=\sum_{\left[u_{1},v_{1}\right]\in\mathcal{P}\left[x_{1},y_{1}\right]},...,\sum_{\left[u_{k},v_{k}\right]\in\mathcal{P}\left[x_{k},y_{k}\right]}\varXi\left(\mathbf{u},\mathbf{v}\right).
\]
Using this notation, we know that if $\theta=\left(1,2,...,k\right)$
we can write 
\[
I_{\mathcal{P}^{\theta}}=I_{\mathcal{P}^{\left(1\right)}},...,I_{\mathcal{P}^{\left(k\right)}}=:\prod_{i\in\theta}I_{\mathcal{P}^{\left(i\right)}},
\]
where for fixed $\mathbf{x}^{\prime},\mathbf{y}^{\prime}\in\left[0,1\right]^{k-1},$
we have 
\[
I_{\mathcal{P}^{\left(i\right)}}:\mathcal{C}_{2}^{\alpha_{i},\beta_{i},\left(i\right)}\left(\left[0,1\right]^{k};\mathbb{R}\right)\rightarrow\mathcal{C}^{\alpha}
\]
\[
I_{\mathcal{P}^{\left(i\right)}\left[x_{i},y_{i}\right]}\varXi\left(\cdot,\mathbf{x}^{\prime},\mathbf{y}^{\prime}\right)=\sum_{\left[u_{i},v_{i}\right]\in\mathcal{P}^{\left(i\right)}\left[x_{i},y_{i}\right]}\varXi\left(u_{i},v_{i};\mathbf{x}^{\prime},\mathbf{y}^{\prime}\right).
\]
We will mostly write $I_{\mathcal{P}^{\left(i\right)}}=I_{\mathcal{P}^{\left(i\right)}\left[x_{i},y_{i}\right];\left[\mathbf{x},\mathbf{y}\right]}$
when the set we integrate over is else clear. Moreover, denote by
\[
\mathcal{P}^{\left(i\right)}\left[x_{i},y_{i}\right]\setminus\left\{ z_{i}\right\} :=\mathcal{P}^{\left(i\right)}\left[x_{i},y_{i}\right]\setminus\left\{ \left[z_{i}^{-},z_{i}\right],\left[z_{i},z_{i}^{+}\right]\right\} \cup\left\{ \left[z_{i}^{-},z_{i}^{+}\right]\right\} ,
\]
and then define $\mathcal{P}^{\theta}\setminus\left\{ \mathbf{z}\right\} =\prod_{i\in\theta}\mathcal{P}^{\left(i\right)}\left[x_{i},y_{i}\right]\setminus\left\{ z_{i}\right\} $
as the partition $\mathcal{P}^{\theta}$ when we remove a point $\mathbf{z\in\left[\mathbf{x},\mathbf{y}\right]}$.
From Equation (\ref{eq:Rieman sum pluss delta equation}) we can see
that 
\[
I_{\mathcal{P}^{\left(i\right)}\left[x_{i},y_{i}\right]\setminus\left\{ z_{i}\right\} }=I_{\mathcal{P}^{\left(i\right)}\left[x_{i},y_{i}\right]}+\delta_{z_{i}}^{\left(i\right)},
\]
and we can also write 
\begin{equation}
I_{\mathcal{P}^{\theta}\setminus\left\{ \mathbf{z}\right\} }=\prod_{i\in\theta}\left(I_{\mathcal{P}^{\left(i\right)}\left[x_{i},y_{i}\right]}+\delta_{z_{i}}^{\left(i\right)}\right)\label{eq:expansion when removing point}
\end{equation}
 Let us first consider $\theta=\left\{ 1,2\right\} $ then it follows
from rough path theory (see Lemma 4.2 in \cite{FriHai}) and since
$\varXi\in\mathcal{C}_{2}^{\alpha,\beta,\left\{ 1,2\right\} }\left(\left[0,1\right]^{4};\mathbb{R}\right)$,
that 
\[
\mathcal{I}^{\left(i\right)}\left(\varXi\right)\left(\mathbf{x},\mathbf{y}\right)=\lim_{|\mathcal{P}^{\left(i\right)}|\rightarrow0}\sum_{\left[u_{i},v_{i}\right]\in\mathcal{P}^{\left(i\right)}\left[x_{i},y_{i}\right]}\varXi\left(u_{i},v_{i};\mathbf{x}^{\prime},\mathbf{y}^{\prime}\right)
\]
 is well defined for $i\in\theta$. Just as shown above, we can write
\[
\left(I_{\mathcal{P}^{1}\left[x_{1},y_{1}\right]\setminus\left\{ z_{1}\right\} }-I_{\mathcal{P}^{1}}\right)\left(I_{\mathcal{P}^{2}\left[x_{2},y_{2}\right]\setminus\left\{ z_{2}\right\} }-I_{\mathcal{P}^{2}}\right)\varXi
\]
\[
=\left(I_{\mathcal{P}^{\theta}\setminus\left\{ \mathbf{z}\right\} }-I_{\mathcal{P}^{1}}I_{\mathcal{P}^{2}\setminus\left\{ z_{2}\right\} }-I_{\mathcal{P}^{2}}I_{\mathcal{P}^{1}\setminus\left\{ z_{1}\right\} }+I_{\mathcal{P}^{\theta}}\right)\varXi=\delta_{\mathbf{z}}^{\left(1,2\right)}\varXi\left(\mathbf{z}^{-},\mathbf{z}^{+}\right),
\]
and we can see that by Lemma \ref{lem:Deomposition of delta} we have
the decomposition of $\delta^{\left(1,2\right)}$ such that we get
\[
|\left(I_{\mathcal{P}^{\theta}\setminus\left\{ \mathbf{z}\right\} }-I_{\mathcal{P}^{1}}I_{\mathcal{P}^{2}\setminus\left\{ z_{2}\right\} }-I_{\mathcal{P}^{2}}I_{\mathcal{P}^{1}\setminus\left\{ z_{1}\right\} }+I_{\mathcal{P}^{\theta}}\right)\varXi|
\]
\[
\leq\left(\parallel\delta^{\left(1\right)}\varXi\parallel_{\beta_{1}}+\parallel\delta^{\left(2\right)}\varXi\parallel_{\beta_{2}}\right)\left(q_{\beta_{1},\alpha_{2}}\left(\mathbf{z}^{-},\mathbf{z}^{+}\right)+q_{\alpha_{1},\beta_{2}}\left(\mathbf{z}^{-},\mathbf{z}^{+}\right)\right)
\]
\[
=\left(\parallel\delta^{\left(1\right)}\varXi\parallel_{\beta_{1}}+\parallel\delta^{\left(2\right)}\varXi\parallel_{\beta_{2}}\right)q_{\alpha}\left(\mathbf{z}^{-},\mathbf{z}^{+}\right)p_{\beta-\alpha}\left(\mathbf{z}^{-},\mathbf{z}^{+}\right)
\]
\[
\leq\left(\parallel\delta^{\left(1\right)}\varXi\parallel_{\beta_{1}}+\parallel\delta^{\left(2\right)}\varXi\parallel_{\beta_{2}}\right)q_{\alpha}\left(\mathbf{x},\mathbf{y}\right)p_{\beta-\alpha}\left(\mathbf{x},\mathbf{y}\right)
\]
\[
\times\sum_{j=1}^{2}\prod_{i=1}^{2}\left(\frac{2}{r-1}\right)^{\alpha_{i}}\left(\frac{2}{r-1}\right)^{\beta_{j}-\alpha_{j}},
\]
where we also used inequality (\ref{eq:partition division in point inequlaity}).
Furthermore, it is not difficult to see that 
\[
\sum_{j=1}^{2}\prod_{i=1}^{2}\left(\frac{2}{r-1}\right)^{\alpha_{i}}\left(\frac{2}{r-1}\right)^{\beta_{j}-\alpha_{j}}\leq\sum_{j=1}^{2}\left(\frac{2}{r-1}\right)^{\beta_{j}}.
\]
 By iteration we can find that 
\[
\left(Id-I_{\mathcal{P}^{1}}\right)\left(Id-I_{\mathcal{P}^{2}}\right)\varXi\left(\mathbf{x},\mathbf{y}\right)
\]
\[
=\left(\sum_{i=1}^{r}I_{\mathcal{P}^{1}\setminus\left\{ z_{1}^{1},..,z_{1}^{i}\right\} }-I_{\mathcal{P}^{1}\setminus\left\{ z_{1}^{1},..,z_{1}^{i-1}\right\} }\right)\left(\sum_{i=1}^{r}I_{\mathcal{P}^{2}\setminus\left\{ z_{2}^{1},..,z_{2}^{i}\right\} }-I_{\mathcal{P}^{2}\setminus\left\{ z_{2}^{1},..,z_{2}^{i-1}\right\} }\right)\varXi
\]
\[
=\sum_{i,j=1}^{r}\left(I_{\mathcal{P}^{1}\setminus\left\{ z_{1}^{1},..,z_{1}^{i}\right\} }-I_{\mathcal{P}^{1}\setminus\left\{ z_{1}^{1},..,z_{1}^{i-1}\right\} }\right)\left(I_{\mathcal{P}^{2}\setminus\left\{ z_{2}^{1},..,z_{2}^{j}\right\} }-I_{\mathcal{P}^{2}\setminus\left\{ z_{2}^{1},..,z_{2}^{j-1}\right\} }\right)\varXi
\]
\[
=\sum_{i,j=1}^{r}\delta_{z_{1}^{i},z_{2}^{j}}^{\left(1,2\right)}\varXi\left(z_{1}^{i,-},z_{2}^{j,-},z_{1}^{i,+},z_{2}^{j,+}\right)
\]
 By the assumption that $\varXi\in\mathcal{C}_{2}^{\alpha,\beta,\left(1,2\right)}\left(\left[0,1\right]^{4};\mathbb{R}\right)$,
and using inequality (\ref{eq:partition division in point inequlaity}),
we can see that 
\[
|\left(Id-I_{\mathcal{P}^{1}}\right)\left(Id-I_{\mathcal{P}^{2}}\right)\varXi\left(\mathbf{x},\mathbf{y}\right)|.
\]
\[
\lesssim\left(\parallel\delta^{\left(1\right)}\varXi\parallel_{\beta_{1}}+\parallel\delta^{\left(2\right)}\varXi\parallel_{\beta_{2}}\right)q_{\alpha}\left(\mathbf{x},\mathbf{y}\right)p_{\beta-\alpha}\left(\mathbf{x},\mathbf{y}\right)\sum_{j,i=1}^{r}\left(\left(\frac{2}{i}\right)^{\beta_{1}}+\left(\frac{2}{j}\right)^{\beta_{2}}\right)
\]
 
\[
\leq\left(\parallel\delta^{\left(1\right)}\varXi\parallel_{\beta_{1}}+\parallel\delta^{\left(2\right)}\varXi\parallel_{\beta_{2}}\right)q_{\alpha}\left(\mathbf{x},\mathbf{y}\right)p_{\beta-\alpha}\left(\mathbf{x},\mathbf{y}\right)\zeta\left(\beta_{1}\right)\zeta\left(\beta_{2}\right),
\]
where $\zeta$ is the Riemann-Zeta function, and by $\parallel\delta^{\left(i\right)}\varXi\parallel_{\beta_{i}}$
we mean $\parallel\delta^{\left(i\right)}\varXi\parallel_{\left(\beta_{i},\alpha_{j}\right),\square}$
for $i\neq j\in\left\{ 1,2\right\} $. Moreover since $\varXi\in\mathcal{C}_{2}^{\alpha,\beta,\left(1,2\right)}\left(\left[0,1\right]^{4};\mathbb{R}\right),$
we know that 
\[
|I_{\mathcal{P}^{i}}\varXi\left(\mathbf{x},\mathbf{y}\right)-\varXi\left(\mathbf{x},\mathbf{y}\right)|\lesssim\parallel\delta^{\left(i\right)}\varXi\parallel_{\beta_{i}}q_{\alpha}\left(\mathbf{x},\mathbf{y}\right)|x_{i}-y_{i}|^{\beta_{i}-\alpha_{i}}
\]
 for $i\in\theta$. Also, note the relation 
\[
I_{\mathcal{P}^{1,2}}-Id=\left(Id-I_{\mathcal{P}^{1}}\right)\left(Id-I_{\mathcal{P}^{2}}\right)
\]
\[
+\left(I_{\mathcal{P}^{1}}-Id\right)+\left(I_{\mathcal{P}^{2}}-Id\right).
\]
 This lets us conclude that 
\[
|I_{\mathcal{P}^{1,2}\left[\mathbf{x},\mathbf{y}\right]}\varXi\left(\mathbf{x},\mathbf{y}\right)-\varXi\left(\mathbf{x},\mathbf{y}\right)|
\]
\[
\lesssim\left(\parallel\delta^{\left(1\right)}\varXi\parallel_{\beta_{1}}+\parallel\delta^{\left(2\right)}\varXi\parallel_{\beta_{2}}\right)q_{\alpha}\left(\mathbf{x},\mathbf{y}\right)p_{\beta-\alpha}\left(\mathbf{x},\mathbf{y}\right),
\]
 where we have used Next, we will generalize this argument for $k\geq2$,
and $\theta=\left\{ 1,...,k\right\} $. Assume that all integrals
up to order $k-1$ exists, and then do the argument by induction.
More specifically, we assume that for all subsets $\gamma\subset\theta$
the integrals 
\[
\mathcal{I}^{\gamma}\left(\varXi\right)\left(\mathbf{x},\mathbf{y}\right)=\lim_{|\mathcal{P}^{\gamma}|\rightarrow0}\sum_{\left[\mathbf{u},\mathbf{v}\right]\in\mathcal{P}^{\gamma}}\varXi\left(\mathbf{u},\mathbf{v};\mathbf{x},\mathbf{y}\right),
\]
where $\varXi\left(\mathbf{u},\mathbf{v};\mathbf{x},\mathbf{y}\right)$
denotes the Riemann integrand in the variables $\mathbf{u},\mathbf{v}\in\left[0,1\right]^{\gamma}$
and the are also evaluated in the remaining variables of $\mathbf{x}$
and $\mathbf{y}$. Similarly as for the case when $k=2$, we can use
Equation (\ref{eq:expansion when removing point}) , we can iterate
again, and find 
\[
|\prod_{i\in\theta}\left(Id-I_{\mathcal{P}^{\left(i\right)}}\right)\varXi|=|\prod_{i\in\theta}\left(\sum_{j=1}^{r}I_{\mathcal{P}^{\left(i\right)}\setminus\left\{ z_{i}^{1},..,z_{i}^{j}\right\} }-I_{\mathcal{P}^{\left(i\right)}\setminus\left\{ z_{i}^{1},..,z_{i}^{j-1}\right\} }\right)|
\]

\[
=|\sum_{j_{1},..,j_{|\theta|}=1}^{r}\prod_{i\in\theta}\left(I_{\mathcal{P}^{\left(i\right)}\setminus\left\{ z_{i}^{1},..,z_{i}^{j_{i}}\right\} }-I_{\mathcal{P}^{\left(i\right)}\setminus\left\{ z_{i}^{1},..,z_{i}^{j_{i}-1}\right\} }\right)\varXi|
\]
\[
=|\sum_{j_{1},..,j_{|\theta|}=1}^{r}\left(\prod_{i\in\theta}\delta_{z_{i}^{j_{i}}}^{\left(i\right)}\right)\varXi|
\]
\[
\lesssim\parallel\delta^{\theta}\varXi\parallel_{\beta}\sum_{j_{1},..,j_{|\theta|}=1}^{r}q_{\alpha}\left(z_{1}^{j_{1},-},..z_{k}^{j_{|\theta|},-};z_{1}^{j_{1},+},..z_{k}^{j_{|\theta|},+}\right)
\]
\[
\times p_{\beta-\alpha}\left(z_{1}^{j_{1},-},..z_{k}^{j_{|\theta|},-};z_{1}^{j_{1},+},..z_{k}^{j_{|\theta|},+}\right),
\]
where $|\theta|$ is the number of elements in $\theta.$ Using the
bounds from Inequality (\ref{eq:partition division in point inequlaity}),
we can see that 
\[
|\prod_{i\in\theta}\left(Id-I_{\mathcal{P}^{\left(i\right)}}\right)\varXi|\lesssim\left(\sum_{i\in\theta}\parallel\delta^{\left(i\right)}\varXi\parallel_{\beta_{i,}}\right)q_{\alpha}\left(\mathbf{x},\mathbf{y}\right)p_{\beta-\alpha}\left(\mathbf{x},\mathbf{y}\right)\prod_{i\in\theta}\zeta\left(\beta_{i}\right),
\]
where $\zeta$ is the Riemann-Zeta function which is converging since
$\beta_{i}>1$ for all $i=1,..,k$. Furthermore, for all multi-indices
$\gamma\subset\theta=\left\{ 1,2,...,k\right\} $ such that $\gamma$
is a subset of $\theta$ we know by the induction hypothesis that
\[
|\left(I_{\mathcal{P}^{\gamma}}-Id\right)\varXi\left(\mathbf{x},\mathbf{y}\right)|\lesssim\left(\sum_{i\in\gamma}\parallel\delta^{\left(i\right)}\varXi\parallel_{\beta_{i}}\right)q_{\alpha}\left(\mathbf{x},\mathbf{y}\right)p_{\beta-\alpha}^{\gamma}\left(\mathbf{x},\mathbf{y}\right),
\]
 where $p_{\beta}^{\gamma}\left(\mathbf{x},\mathbf{y}\right)=\sum_{i\in\gamma}|x_{i}-y_{i}|^{\beta_{i}}$
, and our goal is to prove that also 
\[
|\left(I_{\mathcal{P}^{\theta}}-Id\right)\varXi\left(\mathbf{x},\mathbf{y}\right)|\lesssim\left(\sum_{i\in\theta}\parallel\delta^{\left(i\right)}\varXi\parallel_{\beta_{i}}\right)q_{\alpha}\left(\mathbf{x},\mathbf{y}\right)p_{\beta-\alpha}\left(\mathbf{x},\mathbf{y}\right).
\]
First, notice that for $\theta=\left\{ 1,...,k\right\} ,$ we can
write 
\begin{equation}
\prod_{i\in\theta}\left(I_{\mathcal{P}^{\left(i\right)}}-Id\right)\varXi=I_{\mathcal{P}^{\theta}}+\left(-1\right)^{|\theta|}Id+\sum_{\gamma\subset\theta}\left(-1\right)^{|\gamma|}I_{\mathcal{P}^{\gamma}}\label{eq:product of delta operator vs sum of lower delta}
\end{equation}
where $|\theta|$ denotes the number of elements in $\theta.$ We
can see that on the RHS the sum $\sum_{\gamma\subset\theta}\left(-1\right)^{|\gamma|}I_{\mathcal{P}^{\gamma}}$
consists of $2^{k}-2$elements, i.e. there exists $2^{k}-2$ ordered
subsets of $\left\{ 1,2,..,k\right\} $ when we exclude $\left\{ 1,2,...,k\right\} $
and $\left\{ \emptyset\right\} $. We will distinguish between two
scenarios, when $|\theta|$ is even and when it is odd. If $|\theta|$
is odd, then the sum $\sum_{\gamma\subset\theta}\left(-1\right)^{|\gamma|}I_{\mathcal{P}^{\gamma}}$
consists of $\left(2^{k}-2\right)/2$ operators with negative signs,
and $\left(2^{k}-2\right)/2$ with positive sign, and we can add and
subtract $\left(2^{k}-2\right)/2$ $Id$-operators, to see that 
\[
\sum_{\gamma\subset\theta}\left(-1\right)^{|\gamma|}I_{\mathcal{P}^{\gamma}}\pm\frac{\left(2^{k}-2\right)}{2}Id=\sum_{\gamma\subset\theta}\left(-1\right)^{|\gamma|+1}\left(I_{\mathcal{P}^{\gamma}}-Id\right).
\]
We can rearrange the above relation, and together with Equation (\ref{eq:product of delta operator vs sum of lower delta})
we obtain that 
\[
|\left(I_{\mathcal{P}^{\theta}}-Id\right)\varXi|\lesssim\sum_{\gamma\subset\theta}|\left(I_{\mathcal{P}^{\gamma}}-Id\right)\varXi|+|\prod_{i\in\theta}\left(I_{\mathcal{P}^{\left(i\right)}}-Id\right)\varXi|.
\]
Combining with our previous estimates, we can see that 
\[
|\left(I_{\mathcal{P}^{\theta}}-Id\right)\varXi\left(\mathbf{x},\mathbf{y}\right)|
\]
\[
\lesssim\sum_{\gamma\subset\theta}\left(\left(\sum_{i\in\gamma}\parallel\delta^{\left(i\right)}\varXi\parallel_{\beta_{i}}\right)p_{\beta-\alpha}^{\gamma}\left(\mathbf{x},\mathbf{y}\right)+\left(\sum_{i\in\theta}\parallel\delta^{\left(i\right)}\varXi\parallel_{\beta_{i}}\right)p_{\beta-\alpha}\left(\mathbf{x},\mathbf{y}\right)\right)q_{\alpha}\left(\mathbf{x},\mathbf{y}\right)
\]
\[
\lesssim2^{k}\left(\sum_{i\in\theta}\parallel\delta^{\left(i\right)}\varXi\parallel_{\beta_{i}}\right)p_{\beta-\alpha}\left(\mathbf{x},\mathbf{y}\right)q_{\alpha}\left(\mathbf{x},\mathbf{y}\right).
\]
On the other hand, if $|\theta|$ is even, then the sum $\sum_{\gamma\subset\theta}\left(-1\right)^{|\gamma|}I_{\mathcal{P}^{\gamma}}$
consists of two more negative operators. Rewriting Equation (\ref{eq:product of delta operator vs sum of lower delta})
we have the equality 
\[
I_{\mathcal{P}^{\theta}}=\prod_{i\in\theta}\left(I_{\mathcal{P}^{\left(i\right)}}-Id\right)\varXi-\left(Id+\sum_{\gamma\subset\theta}\left(-1\right)^{|\gamma|}I_{\mathcal{P}^{\gamma}}\right),
\]
Now, add and subtract $2^{k-1}$ $Id$ operators, and we obtain 
\[
I_{\mathcal{P}^{\theta}}=\prod_{i\in\theta}\left(I_{\mathcal{P}^{\left(i\right)}}-Id\right)\varXi-Id-\sum_{\gamma\subset\theta}\left(-1\right)^{|\gamma|}\left(I_{\mathcal{P}^{\gamma}}-Id\right)+2Id,
\]
 and rewriting this expression and applying it to the integrand $\varXi,$
we conclude just as in the case of odd $|\theta|$ that 
\[
|\left(I_{\mathcal{P}^{\theta}}-Id\right)\varXi|\lesssim2^{k}\left(\sum_{i\in\theta}\parallel\delta^{\left(i\right)}\varXi\parallel_{\beta_{i}}\right)p_{\beta-\alpha}\left(\mathbf{x},\mathbf{y}\right)q_{\alpha}\left(\mathbf{x},\mathbf{y}\right).
\]
This lets us conclude that the above inequality holds for any $\theta=\left\{ 1,...,k\right\} $. 

Also note that the above inequality is independent of which points
that divides up the partition $\mathcal{P}\left[\mathbf{x},\mathbf{y}\right]$.
Therefore, let $\tilde{\mathcal{P}}\left[\mathbf{x},\mathbf{y}\right]$
denote any grid-like partition, and we obtain a maximal inequality,
\[
|\varXi\left(\mathbf{x},\mathbf{y}\right)-I{}_{\mathcal{\tilde{P}}\left[\mathbf{x},\mathbf{y}\right]}\varXi|\lesssim2^{k}\left(\sum_{i\in\theta}\parallel\delta^{\left(i\right)}\varXi\parallel_{\beta_{i}}\right)p_{\beta-\alpha}\left(\mathbf{x},\mathbf{y}\right)q_{\alpha}\left(\mathbf{x},\mathbf{y}\right).
\]
\\
$\left(2\right)\,\,\,\,\,$Next, we will show that we have convergence
of the integral regardless of the chosen grid-like partition. That
is, we will show that the limit $\lim_{|\mathcal{P}\left[\mathbf{x},\mathbf{y}\right]|\rightarrow0}I_{\mathcal{P}\left[\mathbf{x},\mathbf{y}\right]}\varXi$
is independent of the partition $\mathcal{P}\left[\mathbf{x},\mathbf{y}\right]$.
This is equivalent to showing 
\[
\sup_{|\mathcal{P}\left[\mathbf{x},\mathbf{y}\right]|\vee|\mathcal{P}'\left[\mathbf{x},\mathbf{y}\right]|\leq\epsilon}|I_{\mathcal{P}'\left[\mathbf{x},\mathbf{y}\right]}\varXi-I_{\mathcal{P}\left[\mathbf{x},\mathbf{y}\right]}\varXi|\rightarrow0\,\,\,as\,\,\,\epsilon\rightarrow0.
\]
 Assume without loss of generality that $|\mathcal{P}^{\prime}\left[\mathbf{x},\mathbf{y}\right]|\vee|\mathcal{P}\left[\mathbf{x},\mathbf{y}\right]|=|\mathcal{P}\left[\mathbf{x},\mathbf{y}\right]|,$
i.e. $\mathcal{P}^{\prime}\left[\mathbf{x},\mathbf{y}\right]$ refines
$\mathcal{P}\left[\mathbf{x},\mathbf{y}\right]$ in the sense that
for all $i\in\left\{ 1,..,k\right\} $ we have $|\mathcal{P}^{\prime,\left(i\right)}\left[x_{i},y_{i}\right]|\vee|\mathcal{P}^{\left(i\right)}\left[x_{i},y_{i}\right]|=|\mathcal{P}^{\left(i\right)}\left[x_{i},y_{i}\right]|$.
Using that $\mathcal{P}'\left[\mathbf{x},\mathbf{y}\right]=\prod_{i\in\theta}\bigcup_{\left[u_{i},v_{i}\right]\in\mathcal{P}^{\left(i\right)}\left[x_{i},y_{i}\right]}\mathcal{P}^{\prime,\left(i\right)}\left[x_{i},y_{i}\right]\cap\left[u_{i},v_{i}\right]$,
we can use a similar trick as we did before and write 
\[
I_{\mathcal{P}^{\prime}}=\prod_{i\in\theta}I_{\bigcup_{\left[u_{i},v_{i}\right]\in\mathcal{P}^{\left(i\right)}\left[x_{i},y_{i}\right]}\mathcal{P}^{\prime,\left(i\right)}\left[x_{i},y_{i}\right]\cap\left[u_{i},v_{i}\right]}
\]
\[
=\prod_{i\in\theta}I_{\bigcup_{\left[u_{i},v_{i}\right]\in\mathcal{P}^{\left(i\right)}\left[x_{i},y_{i}\right]}}I_{\mathcal{P}^{\prime,\left(i\right)}\left[x_{i},y_{i}\right]\cap\left[u_{i},v_{i}\right]}
\]
\[
=\prod_{i\in\theta}I_{\bigcup_{\left[u_{i},v_{i}\right]\in\mathcal{P}^{\left(i\right)}\left[x_{i},y_{i}\right]}}\prod_{i\in\theta}I_{\mathcal{P}^{\prime,\left(i\right)}\left[x_{i},y_{i}\right]\cap\left[u_{i},v_{i}\right]}.
\]
Therefore, we can compare 
\[
|\left(I_{\mathcal{P}}-I_{\mathcal{P}^{\prime}}\right)\varXi|
\]
\[
\leq\prod_{i\in\theta}I_{\bigcup_{\left[u_{i},v_{i}\right]\in\mathcal{P}^{\left(i\right)}\left[x_{i},y_{i}\right]}}|\prod_{i\in\theta}\left(I-I_{\mathcal{P}^{\prime,\left(i\right)}\left[x_{i},y_{i}\right]\cap\left[u_{i},v_{i}\right]}\right)\varXi|,
\]
 by the maximal inequality showed above, we have 
\[
\prod_{i\in\theta}\left(I-I_{\mathcal{P}^{\prime,\left(i\right)}\left[x_{i},y_{i}\right]\cap\left[u_{i},v_{i}\right]}\right)\varXi\left(\mathbf{u},\mathbf{v}\right)|
\]
\[
\lesssim C_{\beta}2^{k}\left(\sum_{i\in\theta}\parallel\delta^{\left(i\right)}\varXi\parallel_{\beta_{i}}\right)\left(\prod_{i\in\theta}I_{\mathcal{P}^{\left(i\right)}\left[x_{i},y_{i}\right]}\right)p_{\beta-\alpha}\left(\mathbf{u},\mathbf{v}\right)q_{\alpha}\left(\mathbf{u},\mathbf{v}\right),
\]
where 
\[
\prod_{i\in\theta}I_{\mathcal{P}^{\left(i\right)}\left[x_{i},y_{i}\right]}p_{\beta-\alpha}\left(\mathbf{u},\mathbf{v}\right)q_{\alpha}\left(\mathbf{u},\mathbf{v}\right)=\sum_{\left[\mathbf{u},\mathbf{v}\right]\in\mathcal{P}\left[\mathbf{x},\mathbf{y}\right]}p_{\beta-\alpha}\left(\mathbf{u},\mathbf{v}\right)q_{\alpha}\left(\mathbf{u},\mathbf{v}\right).
\]
\[
\leq\sum_{\left[u_{1},v_{1}\right]\in\mathcal{P}^{\left(1\right)}}\cdots\sum_{\left[u_{k},v_{k}\right]\in\mathcal{P}^{\left(k\right)}}\sum_{i=1}^{k}|v_{i}-u_{i}|^{\beta_{i}}.
\]
Note that for $\beta\in\left(1,\infty\right),$we have $\sum_{\left[\mathbf{u},\mathbf{v}\right]\in\mathcal{P}\left[\mathbf{x},\mathbf{y}\right]}\sum_{i=1}^{k}|v_{i}-u_{i}|^{\beta_{i}}=\sum_{i=1}^{k}\epsilon^{\beta_{i}-1}|x_{i}-y_{i}|,$and
it follows that 
\[
\sup_{|\mathcal{P}\left[\mathbf{x},\mathbf{y}\right]|\vee|\mathcal{P}'\left[\mathbf{x},\mathbf{y}\right]|\leq\epsilon}|\int_{\mathcal{P}'\left[\mathbf{x},\mathbf{y}\right]}\varXi-\int_{\mathcal{P}\left[\mathbf{x},\mathbf{y}\right]}\varXi|\lesssim\epsilon^{\inf\beta_{i}-1},
\]
which proves our claim, and concludes the proof. 
\end{proof}
Next we will show how $\delta$ operates on functions of the form
$Y\left(\mathbf{x}\right)\square_{\mathbf{x},\mathbf{y}}^{k}X$ for
some $\mathbf{x},\mathbf{y}\in\left[0,1\right]^{k}$ which plays a
crucial role in the construction of the Young integral (as we can
consider this the local approximation of the Young integral operator).
\begin{lem}
\label{lem:Chens relation identity}Let $\theta=\left\{ 1,..,k\right\} $
and $\varXi\left(\mathbf{x},\mathbf{y}\right):=Y\left(\mathbf{x}\right)\square_{\mathbf{x},\mathbf{y}}^{k}X$
for some $X,Y\in C\left(\left[0,1\right]^{k}\right)$. For some $\mathbf{z}\in\left[\mathbf{x},\mathbf{y}\right]$
we have for all $j\in\theta$
\[
\delta_{\mathbf{z}}^{\left(j\right)}\varXi\left(\mathbf{x},\mathbf{y}\right)=-\left(Y\left(V_{j,\mathbf{z}}\mathbf{x}\right)-Y\left(\mathbf{x}\right)\right)\square_{V_{j,\mathbf{z}}\mathbf{\mathbf{x}},\mathbf{y}}^{k}X.
\]
\end{lem}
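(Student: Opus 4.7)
The plan is to simply unfold the definition of $\delta_{\mathbf{z}}^{(j)}$, split $\varXi$ into its two factors $Y$ and $\square^{k}X$, and then use the additivity identity for the generalized increment that was already established inside the proof of Lemma \ref{lem:Delta on increment}. No induction or analytic estimates are needed; the whole statement is an algebraic identity.

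First I would write out
\[
\delta_{\mathbf{z}}^{(j)}\varXi(\mathbf{x},\mathbf{y}) = \varXi(\mathbf{x},\mathbf{y}) - V_{j,\mathbf{x},\mathbf{z}}\varXi(\mathbf{x},\mathbf{y}) - V_{j,\mathbf{z},\mathbf{y}}\varXi(\mathbf{x},\mathbf{y}).
\]
Using the description of $V_{j,\mathbf{v},\mathbf{z}}$ from Definition \ref{def:delta operator}, the first shift leaves $\mathbf{x}$ unchanged and replaces the $j$-th coordinate of $\mathbf{y}$ by $z_j$, while the second leaves $\mathbf{y}$ unchanged and replaces the $j$-th coordinate of $\mathbf{x}$ by $z_j$. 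Hence
\[
\delta_{\mathbf{z}}^{(j)}\varXi(\mathbf{x},\mathbf{y}) = Y(\mathbf{x})\square_{\mathbf{x},\mathbf{y}}^{k}X - Y(\mathbf{x})\square_{\mathbf{x},V_{j,\mathbf{z}}\mathbf{y}}^{k}X - Y(V_{j,\mathbf{z}}\mathbf{x})\square_{V_{j,\mathbf{z}}\mathbf{x},\mathbf{y}}^{k}X,
\]
where the first two terms share the common factor $Y(\mathbf{x})$.

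The key step is then to invoke the identity
\[
\square_{\mathbf{x},V_{j,\mathbf{z}}\mathbf{y}}^{k}X + \square_{V_{j,\mathbf{z}}\mathbf{x},\mathbf{y}}^{k}X = \square_{\mathbf{x},\mathbf{y}}^{k}X,
\]
which is precisely the content of the intermediate computation in the proof of Lemma \ref{lem:Delta on increment} (there stated for all indices $i$ simultaneously, but the argument is coordinate-by-coordinate and applies to any single $j\in\theta$). Substituting this back, the $Y(\mathbf{x})$ terms collapse to $Y(\mathbf{x})\square_{V_{j,\mathbf{z}}\mathbf{x},\mathbf{y}}^{k}X$, which combines with $-Y(V_{j,\mathbf{z}}\mathbf{x})\square_{V_{j,\mathbf{z}}\mathbf{x},\mathbf{y}}^{k}X$ to give exactly
\[
-(Y(V_{j,\mathbf{z}}\mathbf{x})-Y(\mathbf{x}))\square_{V_{j,\mathbf{z}}\mathbf{x},\mathbf{y}}^{k}X,
\]
as desired.

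The only place where I would be careful is the bookkeeping of which argument of the pair $(\mathbf{x},\mathbf{y})$ each $V$ acts on; the notation $V_{j,\mathbf{v},\mathbf{z}}$ mixes a replacement in the first slot with one in the second, and it is easy to swap $V_{j,\mathbf{z}}\mathbf{x}$ with $V_{j,\mathbf{z}}\mathbf{y}$ by accident. I would therefore explicitly write out the two substitutions before combining them. Once that is done, the remainder is a one-line algebraic manipulation.
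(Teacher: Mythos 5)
Your proof is correct and follows essentially the same route as the paper: unfold the definition of $\delta_{\mathbf{z}}^{(j)}$, apply the two shifts to the product $Y(\mathbf{x})\square_{\mathbf{x},\mathbf{y}}^{k}X$, and collapse the two $Y(\mathbf{x})$-terms via the additivity $\square_{\mathbf{x},\mathbf{y}}^{k}X - \square_{\mathbf{x},V_{j,\mathbf{z}}\mathbf{y}}^{k}X = \square_{V_{j,\mathbf{z}}\mathbf{x},\mathbf{y}}^{k}X$. The only cosmetic difference is that you cite the additivity identity from the intermediate computation in Lemma~\ref{lem:Delta on increment}, whereas the paper re-derives it in place.
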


\begin{proof}
Using the commuting property $V_{i,\mathbf{x}}V_{j,\mathbf{x}}=V_{j,\mathbf{x}}V_{i,\mathbf{x}}$
for all $i,j\in\left\{ 1,..,k\right\} $ we can see that
\[
\square_{\mathbf{x},\mathbf{y}}^{k}X-\square_{\mathbf{x},\left(y_{1},..,z_{j},..,y_{k}\right)}^{k}X=\prod_{i=1}^{j-1}\left(I-V_{i,\left(y_{1},..,z_{j},..,y_{k}\right)}\right)\times\prod_{i=j+1}^{k}\left(I-V_{i,\left(y_{1},..,z_{j},..,y_{k}\right)}\right)
\]
\[
\times\left(X\left(x_{1},..,z_{j},..,x_{k}\right)-X\left(x_{1},..,y_{j},..,x_{k}\right)\right)
\]
\[
=\prod_{i=1}^{j-1}\left(I-V_{i,\mathbf{y}}\right)\times\prod_{i=j+1}^{k}\left(I-V_{i,\mathbf{y}}\right)\left(I-V_{j,\mathbf{y}}\right)X\left(x_{1},..,z_{j},..,x_{k}\right)
\]
\[
=\square_{\left(x_{1},..,z_{j},..,x_{k}\right),\mathbf{y}}^{k}X,
\]
 and we can see that
\[
\delta_{\mathbf{z}}^{\left(j\right)}Y\left(\mathbf{x}\right)\square_{\mathbf{x},\mathbf{y}}^{k}X=Y\left(\mathbf{x}\right)\square_{\mathbf{x},\mathbf{y}}^{k}X-Y\left(\mathbf{x}\right)\square_{\mathbf{x},V_{j,\mathbf{z}}\mathbf{y}}^{k}X-Y\left(V_{j,\mathbf{z}}\mathbf{\mathbf{x}}\right)\square_{V_{j,\mathbf{z}}\mathbf{x},\mathbf{y}}^{k}X
\]
\[
=-\left(Y\left(V_{j,\mathbf{z}}\mathbf{x}\right)-Y\left(\mathbf{x}\right)\right)\square_{V_{j,\mathbf{z}}\mathbf{\mathbf{x}},\mathbf{y}}^{k}X.
\]
\end{proof}
With the above Lemma we are now ready to present the an integration
theorem for Young fields on hyper-cubes $\left[0,1\right]^{k}$. 
\begin{thm}
\label{thm:Young integration}Let $Y\in\mathcal{C}^{\alpha}\left(\left[0,1\right]^{k};\mathbb{R}\right)$
and $X\in\mathcal{C}^{\beta}\left(\left[0,1\right]^{k};\mathbb{R}\right)$
for two multi-indices $\alpha\in\left(0,1\right)^{k}$ and $\beta\in\left(0,1\right)^{k}$
with the property that 
\[
\alpha_{i}+\beta_{i}>1\,\,\,\forall i\in\left\{ 1,..,k\right\} ,
\]
 and assume $\left[\mathbf{x},\mathbf{y}\right]$ is non-degenerate.
Then there exists a $C_{\alpha,\beta,k}>0$ such that 
\[
|\int_{\mathbf{x}}^{\mathbf{y}}Y\left(\mathbf{z}\right)X\left(d\mathbf{z}\right)-Y\left(\mathbf{x}\right)\square_{\mathbf{x},\mathbf{y}}^{k}X|\leq C_{\alpha,\beta,k}\parallel Y\parallel_{\alpha,+}\parallel X\parallel_{\beta,\square}q_{\beta}\left(\mathbf{x},\mathbf{y}\right)p_{\alpha}\left(\mathbf{x},\mathbf{y}\right)
\]
 with $p_{\gamma}\left(\mathbf{x},\mathbf{y}\right)=\sum_{i\in\theta}|x_{i}-y_{i}|^{\gamma_{i}}$
and $q_{\gamma}\left(\mathbf{x},\mathbf{y}\right)=\prod_{i\in\theta}|x_{i}-y_{i}|^{\gamma_{i}}$.
Moreover, it follows that
\[
\mathbf{x}\mapsto\int_{\mathbf{0}}^{\mathbf{x}}Y\left(\mathbf{z}\right)X\left(d\mathbf{z}\right)\in\mathcal{C}^{\beta}.
\]
\end{thm}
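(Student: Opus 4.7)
The plan is to realize the integral as the value $\mathcal{I}(\Xi)(\mathbf{x},\mathbf{y})$ of the sewing map applied to the natural local approximation
\[
\Xi(\mathbf{x},\mathbf{y}) := Y(\mathbf{x})\,\square_{\mathbf{x},\mathbf{y}}^{k} X,
\]
and then read the quantitative bound directly off Lemma \ref{thm:Sewing map}. The whole argument thus reduces to checking that $\Xi$ belongs to the space $\mathcal{C}_2^{\beta,\alpha+\beta,\theta}$ with seminorm controlled by $\|Y\|_{\alpha,+}\|X\|_{\beta,\square}$; here the Sewing Lemma plays the role of $\alpha_{\mathrm{sew}}=\beta$ and $\beta_{\mathrm{sew}}=\alpha+\beta$, and the Young condition $\alpha_i+\beta_i>1$ is precisely what guarantees $\beta_{\mathrm{sew}}\in(1,\infty)^k$.

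The base regularity is immediate from the definition of $\|X\|_{\beta,\square}$: $|\Xi(\mathbf{x},\mathbf{y})|\le \|Y\|_\infty\,\|X\|_{\beta,\square}\,q_\beta(\mathbf{x},\mathbf{y})$. For the boosted regularity I would invoke Lemma \ref{lem:Chens relation identity}: for each $j\in\theta$ and $\mathbf{z}\in[\mathbf{x},\mathbf{y}]$,
\[
\delta_\mathbf{z}^{(j)}\Xi(\mathbf{x},\mathbf{y}) = -\bigl(Y(V_{j,\mathbf{z}}\mathbf{x})-Y(\mathbf{x})\bigr)\,\square_{V_{j,\mathbf{z}}\mathbf{x},\mathbf{y}}^{k}X.
\]
The first factor involves only a change in the $j$-th coordinate of $\mathbf{x}$, hence is bounded by $\|Y\|_{\alpha,+}\,|z_j-x_j|^{\alpha_j}$; the second factor is bounded by $\|X\|_{\beta,\square}\,|y_j-z_j|^{\beta_j}\prod_{i\neq j}|y_i-x_i|^{\beta_i}$. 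Using $z_j\in[x_j,y_j]$ yields
\[
|\delta_\mathbf{z}^{(j)}\Xi(\mathbf{x},\mathbf{y})|\le \|Y\|_{\alpha,+}\|X\|_{\beta,\square}\,q_{(\beta_1,\dots,\alpha_j+\beta_j,\dots,\beta_k)}(\mathbf{x},\mathbf{y}),
\]
which is the hypothesis of Lemma \ref{lem:sufficient condition}. Feeding this into Lemma \ref{thm:Sewing map} produces a unique $\mathcal{I}(\Xi)$ with
\[
|\mathcal{I}(\Xi)(\mathbf{x},\mathbf{y})-\Xi(\mathbf{x},\mathbf{y})|\le C_{\alpha,\beta,k}\|Y\|_{\alpha,+}\|X\|_{\beta,\square}\,q_\beta(\mathbf{x},\mathbf{y})\,p_\alpha(\mathbf{x},\mathbf{y}),
\]
and setting $\int_\mathbf{x}^\mathbf{y} Y(\mathbf{z})X(d\mathbf{z}):=\mathcal{I}(\Xi)(\mathbf{x},\mathbf{y})$ gives the first assertion.

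For the final statement that $F(\mathbf{x}):=\int_\mathbf{0}^\mathbf{x} Y(\mathbf{z})X(d\mathbf{z})$ lies in $\mathcal{C}^\beta$, I would first prove the additivity identity $\square_{\mathbf{x},\mathbf{y}}^{k}F = \int_{\mathbf{x}}^{\mathbf{y}} Y(\mathbf{z})X(d\mathbf{z})$ by taking grid partitions of $[\mathbf{0},\mathbf{y}]$ that contain $\mathbf{x}$ as a vertex: the alternating sum defining $\square^{k}$ selects exactly the Riemann sub-sum over $[\mathbf{x},\mathbf{y}]$ out of the $2^k$ sub-cubes cut by $\mathbf{x}$, and passing to the limit identifies the two objects. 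The sewing bound from step one then yields $|\square_{\mathbf{x},\mathbf{y}}^{k}F|\lesssim q_\beta(\mathbf{x},\mathbf{y})$. For the one-dimensional seminorms appearing in Definition \ref{def:two-parameter functions h=0000F6lder space}, the same additivity identifies $F_{\mathbf{x},i}(v)-F_{\mathbf{x},i}(u)$ with the integral of $YX(d\mathbf{z})$ over the slab $\prod_{j\neq i}[0,x_j]\times[u,v]$; since the lateral side-lengths $x_j\in[0,1]$ and $p_\alpha$ is bounded on $[0,1]^k$, the sewing bound reduces this to an estimate of order $|v-u|^{\beta_i}$ uniformly in $\mathbf{x}$, giving the missing $\|F_{\mathbf{x},i}\|_{\beta_i}$ control.

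The main obstacle I expect is precisely this additivity identity $\square_{\mathbf{x},\mathbf{y}}^{k}F=\int_\mathbf{x}^\mathbf{y} YX(d\mathbf{z})$. In one dimension it is Chasles' relation and is essentially built into the sewing construction, but in the multi-parameter grid setting one must carefully check that refining Riemann sums on $[\mathbf{0},\mathbf{y}]$ through partitions subordinate to the coordinates $\{0,x_i,y_i\}$ decomposes consistently into the $2^k$ sub-rectangle integrals with the correct signs, and pass to the limit. Once this bookkeeping is in place the two claims of the theorem follow mechanically from the sewing estimate, with no further analytic input.
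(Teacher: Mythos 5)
Your proposal is essentially the paper's own proof: both take $\Xi(\mathbf{x},\mathbf{y})=Y(\mathbf{x})\square_{\mathbf{x},\mathbf{y}}^{k}X$, compute $\delta_{\mathbf{z}}^{(j)}\Xi$ via Lemma~\ref{lem:Chens relation identity}, bound it by $\|Y\|_{\alpha,+}\|X\|_{\beta,\square}q_{(\beta_{1},\dots,\alpha_{j}+\beta_{j},\dots,\beta_{k})}(\mathbf{x},\mathbf{y})$, and then invoke Lemma~\ref{thm:Sewing map}; for the $\mathcal{C}^{\beta}$ claim both reduce the one-dimensional increments of $F$ to rectangular ones via the identity $\square_{\mathbf{x},\mathbf{y}}^{k}F=\int_{\mathbf{x}}^{\mathbf{y}}Y\,X(d\mathbf{z})$. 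The ``additivity obstacle'' you flag is handled in the paper by a coordinate-by-coordinate Chasles iteration (peel off one variable at a time via $\int_{0}^{y_{k}}-\int_{0}^{x_{k}}=\int_{x_{k}}^{y_{k}}$ and recurse), which is the same bookkeeping as your $2^{k}$-subcube decomposition but more economically organized.
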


\begin{proof}
Set $\varXi\left(\mathbf{x},\mathbf{y}\right):=Y\left(\mathbf{x}\right)\square_{\mathbf{x},\mathbf{y}}X$
then we must show that for $\theta=\left\{ 1,..,k\right\} $, we have
\[
\parallel\delta^{\theta}\varXi\parallel_{\alpha+\beta}<\infty.
\]
From Lemma \ref{lem:Chens relation identity}, we have for $\mathbf{z\in\left[\mathbf{x},\mathbf{y}\right]}$,
\[
\delta_{\mathbf{z}}^{\left(j\right)}\varXi\left(\mathbf{x},\mathbf{y}\right)=-\left(Y\left(V_{j,\mathbf{z}}\mathbf{x}\right)-Y\left(\mathbf{x}\right)\right)\square_{V_{j,\mathbf{z}}\mathbf{\mathbf{x}},\mathbf{y}}^{k}X.
\]
Therefore, using the regularity of $Y\in\mathcal{C}_{+}^{\alpha}$
and $X\in\mathcal{C}_{\square}^{\beta}$ it is clear that for $\theta=\left\{ 1,..,k\right\} ,$
we have 
\[
\parallel\delta^{\theta}\varXi\parallel_{\alpha,\beta}\leq C_{\beta,\alpha}\sum_{i=1}^{k}\sup_{\left(x_{1},..,x_{i-1},x_{i+1},..,x_{k}\right)\in\left[0,1\right]^{k-1}}\parallel Y_{i,\mathbf{x}}\parallel_{\alpha_{i}}\parallel X\parallel_{\beta,\square}|y_{i}-x_{i}|^{\alpha_{i}}q_{\beta}\left(\mathbf{x},\mathbf{y}\right)
\]
\[
\leq C_{\alpha,\beta}\parallel Y\parallel_{\alpha,+}\parallel X\parallel_{\beta,\square}q_{\beta}\left(\mathbf{x},\mathbf{y}\right)p_{\alpha}\left(\mathbf{x},\mathbf{y}\right).
\]
Now the inequality and the construction of the integral follows from
Lemma \ref{thm:Sewing map}. 

For the second claim that
\[
\mathbf{x}\mapsto\int_{\mathbf{0}}^{\mathbf{x}}Y\left(\mathbf{z}\right)X\left(d\mathbf{z}\right)\in\mathcal{C}^{\beta},
\]
we note that for $\mathbf{x}\neq\mathbf{y}\in\left[0,1\right]^{k}$
(i.e $x_{i}\neq y_{i}$ $\forall i\in\left\{ 1,...,k\right\} $) we
have $\int_{\mathbf{0}}^{\mathbf{x}}=\int_{0}^{x_{1}}\cdots\int_{0}^{x_{k}}$
, and by the linearity of the integrals we get 
\[
\int_{0}^{y_{1}}\cdots\int_{0}^{y_{k-1}}\int_{0}^{y_{k}}-\int_{0}^{y_{1}}\cdots\int_{0}^{y_{k-1}}\int_{0}^{x_{k}}=\int_{0}^{y_{1}}\cdots\int_{0}^{y_{k-1}}\int_{x_{k}}^{y_{k}}.
\]
Therefore, we can use an iterative argument to see that 
\[
\square_{\mathbf{x},\mathbf{y}}^{k}\int_{\mathbf{0}}^{\cdot}Y\left(\mathbf{z}\right)X\left(\mathbf{dz}\right)=\square_{\mathbf{x'},\mathbf{y'}}^{k-1}\int_{\mathbf{0}}^{\left(\cdot,y_{k}\right)}Y\left(\mathbf{z}\right)X\left(\mathbf{dz}\right)-\square_{\mathbf{x'},\mathbf{y'}}^{k-1}\int_{\mathbf{0}}^{\left(\cdot,x_{k}\right)}Y\left(\mathbf{z}\right)X\left(\mathbf{dz}\right)
\]
\[
=\square_{\mathbf{x'},\mathbf{y'}}^{k-1}\int_{\left(\cdot,x_{k}\right)}^{\left(\cdot,y_{k}\right)}Y\left(\mathbf{z}\right)X\left(\mathbf{dz}\right)
\]
\[
\vdots
\]
\[
=\int_{\mathbf{x}}^{\mathbf{y}}Y\left(\mathbf{z}\right)X\left(d\mathbf{z}\right).
\]
From this we can see that for $Z\left(\mathbf{x}\right):=\int_{\mathbf{0}}^{\mathbf{x}}Y\left(\mathbf{z}\right)X\left(d\mathbf{z}\right)$,
we have for all $i\in\left\{ 1,...,k\right\} $
\[
|Z_{i,\mathbf{x}}\left(v_{i}\right)-Z_{i,\mathbf{x}}\left(u_{i}\right)|=|\square_{V_{i,\mathbf{v}}\mathbf{0},V_{i,\mathbf{u}}\mathbf{x}}Z|
\]
\[
\lesssim\left(\left(|Y\left(\mathbf{0}\right)|+C_{\alpha,\beta}\parallel Y\parallel_{\alpha,+}\right)\parallel X\parallel_{\beta,\square}\right)|v_{i}-u_{i}|^{\alpha_{i}},
\]
and together with the first estimate, we obtain our claim. 
\end{proof}
The above Theorem gives us a construction of the Young integral for
fields based on the generalized increment. We can also see that the
Hölder continuity of the integral is of order $\beta$ inherited by
the driving noise $X\in\mathcal{C}^{\beta}.$ This can easily be seen
by adding and subtracting $Y\left(\mathbf{x}\right)\square_{\mathbf{x},\mathbf{y}}^{k}X$
and using the above Theorem, we get 
\begin{equation}
|\int_{\mathbf{x}}^{\mathbf{y}}Y\left(\mathbf{z}\right)X\left(\mathbf{dz}\right)|\leq\left(|Y\left(\mathbf{0}\right)|+C\parallel Y\parallel_{\alpha}p_{\alpha}\left(\mathbf{x},\mathbf{y}\right)\right)\parallel X\parallel_{\beta,\square}q_{\beta}\left(\mathbf{x},\mathbf{y}\right).\label{eq:regularity of integral}
\end{equation}

\section{Differential equations driven by Young Fields }

Using the results established in the previous sections we will now
show existence and uniqueness of solutions to differential equations
driven by Hölder fields on the form 
\[
Y\left(\mathbf{x}\right)=\xi\left(\mathbf{x}\right)+\int_{\mathbf{0}}^{\mathbf{x}}f\left(Y\left(\mathbf{z}\right)\right)X\left(d\mathbf{z}\right),
\]
for some sufficiently smooth function $f$ , and $\xi,X\in\mathcal{C}^{\alpha}\left(\left[0,1\right]^{k};\mathbb{R}\right)$
with $\alpha\in\left(\frac{1}{2},1\right)^{k}$. We will construct
solutions as a fixed point iteration in the space $\mathcal{C}^{\alpha},$
\\
The function $\xi\left(\mathbf{x}\right)$ can be thought of as the
boundary behavior of $Y$. Indeed, note that the integral $\int_{\mathbf{0}}^{\mathbf{x}}$
is $0$ even when one of the variables $x_{i}$ in $\mathbf{x}$ is
$0$, and therefore when one (or more) variable(s) is $0$ (i.e. we
are one the boundary of the hyper-cube $\left[0,1\right]^{k}$), then
$Y$ is completely determined by $\xi$. When $k=1$, it would be
sufficient to describe the initial value of the solution (as a point
in the $1-$dim domain). However, when considering hyper-cubes, it
is usually necessary to provide more information about the behavior
of the solution on the boundary, and therefore $\xi$ is meant to
capture just this behavior.
\begin{thm}
\label{thm:Ambit differential equation}Let $X\in\mathcal{C}^{\alpha}\left(\left[0,1\right]^{k};\mathbb{R}\right)$
with $\alpha\in\left(\frac{1}{2},1\right){}^{k}$, $\xi_{0}\in\mathcal{C}^{\alpha}\left(\left[0,1\right]^{k};\mathbb{R}\right)$
with $\xi_{0}\left(\mathbf{0}\right)\in\mathbb{R}$ and $f\in C_{b}^{2}\left(\mathbb{R};\mathbb{R}\right)$.
There exists a unique solution in $\mathcal{C}^{\alpha}\left(\left[0,1\right]^{k};\mathbb{R}\right)$
to the rough field equation 
\begin{equation}
Y\left(\mathbf{x}\right)=\xi_{0}\left(\mathbf{x}\right)+\int_{\mathbf{0}}^{\mathbf{x}}f\left(Y\left(\mathbf{z}\right)\right)X\left(d\mathbf{z}\right),\label{eq:Ambit integral equation}
\end{equation}
where the integral is understood as a rough field integral in terms
of Theorem \ref{thm:Young integration}.
\end{thm}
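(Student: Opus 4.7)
The plan is a Picard/Banach fixed-point argument carried out first on a small sub-cube and then globalised on $[0,1]^k$ via the scaling Proposition \ref{prop:scaleability of H=0000F6lder norms}. Define the solution map
\[
\Gamma(Y)(\mathbf{x}) := \xi_0(\mathbf{x}) + \int_{\mathbf{0}}^{\mathbf{x}} f(Y(\mathbf{z})) X(d\mathbf{z}),
\]
understood via Theorem \ref{thm:Young integration}. We will show that for some $\mathbf{T}\in(\mathbf{0},\mathbf{1})^{k}$ (uniform in the base point) and a sufficiently large radius $R$, $\Gamma$ is a contraction on the closed ball $B_R \subset \mathcal{C}^\alpha([\mathbf{0},\mathbf{T}];\mathbb{R})$ of functions $Y$ with $Y(\mathbf{0})=\xi_0(\mathbf{0})$ and $\|Y\|_{\alpha,[\mathbf{0},\mathbf{T}]}\le R$.

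The first technical step is a composition estimate: if $Y\in\mathcal{C}^\alpha$ and $f\in C_b^2$ then $f\circ Y\in\mathcal{C}^\alpha$ with
\[
\|f(Y)\|_{\alpha,[\mathbf{0},\mathbf{T}]} \le C\bigl(\|f'\|_\infty,\|f''\|_\infty\bigr)\bigl(1+\|Y\|_{\alpha,[\mathbf{0},\mathbf{T}]}^k\bigr).
\]
For the per-variable semi-norm $\|\cdot\|_{\alpha,+}$ this is immediate from the mean-value theorem. For the $\square$-part we use the inductive identity $\square_{\mathbf{x},\mathbf{y}}^k = \square_{\mathbf{x}',\mathbf{y}'}^{k-1}[f(Y(\cdot,y_k))-f(Y(\cdot,x_k))]$, writing $f(Y(\cdot,y_k))-f(Y(\cdot,x_k)) = \int_0^1 f'(Y_t(\cdot))\,dt\cdot(Y(\cdot,y_k)-Y(\cdot,x_k))$ with $Y_t = Y(\cdot,x_k)+t(Y(\cdot,y_k)-Y(\cdot,x_k))$, and iterating. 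Each application of $\square$ loses one factor in the Hölder regularity but contributes the needed $|y_i-x_i|^{\alpha_i}$; the $C^2$-hypothesis controls the successive derivatives of $t\mapsto f'(Y_t)$ in the remaining variables. This is the main technical obstacle and is essentially the multi-parameter analogue of the standard Young composition lemma.

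Granted this estimate, Theorem \ref{thm:Young integration} together with (\ref{eq:regularity of integral}) yields, on a cube $[\mathbf{0},\mathbf{T}]$,
\[
\|\Gamma(Y)-\xi_0\|_{\alpha,[\mathbf{0},\mathbf{T}]} \le C\,\|X\|_{\alpha,\square}\bigl(1+\|Y\|_{\alpha,[\mathbf{0},\mathbf{T}]}^k\bigr)\,\varphi(\mathbf{T}),
\]
where $\varphi(\mathbf{T})\to 0$ as $|\mathbf{T}|\to 0$ (it is built from $q_\alpha(\mathbf{0},\mathbf{T})$ and $p_\alpha(\mathbf{0},\mathbf{T})$). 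Consequently for $|\mathbf{T}|$ small enough $\Gamma$ leaves $B_R$ invariant. For the Lipschitz property, given $Y_1,Y_2\in B_R$, write $f(Y_1)-f(Y_2)=\int_0^1 f'(Y_2+t(Y_1-Y_2))\,dt\cdot(Y_1-Y_2)$ and apply the same composition bound together with the Young estimate to $\int(f(Y_1)-f(Y_2))\,dX$; because $\alpha_i+\alpha_i>1$ and we pick up a factor $\varphi(\mathbf{T})$, choosing $\mathbf{T}$ small yields a contraction constant strictly less than $1$. Banach's fixed-point theorem then produces a unique solution on $[\mathbf{0},\mathbf{T}]$.

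To pass from $[\mathbf{0},\mathbf{T}]$ to $[0,1]^k$ we tile $[0,1]^k$ by translates of $[\mathbf{0},\mathbf{T}]$ and solve on each cell $[\boldsymbol{\rho},\boldsymbol{\rho}+\mathbf{T}]$ with the new $\xi$ chosen so as to encode both the original $\xi_0$ and the already-constructed pieces of $Y$ on the adjacent lower-dimensional faces; uniqueness on each cell ensures the local solutions are compatible on overlaps. Since the smallness of $|\mathbf{T}|$ needed for contraction depends only on $\|X\|_{\alpha,\square}$, $\|f\|_{C_b^2}$ and $\|\xi_0\|_\alpha$ (not on the base point), finitely many cells suffice, and Proposition \ref{prop:scaleability of H=0000F6lder norms} then packages the uniform local $\alpha$-Hölder bounds into a global bound $\|Y\|_{\alpha,[0,1]^k}<\infty$. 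Uniqueness globally follows from uniqueness on each cell by induction on the cells, which closes the proof.
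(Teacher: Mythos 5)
Your overall architecture (Picard iteration on a small sub-cube followed by a gluing/iteration step) matches the paper's, but there are two substantive points worth flagging.

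\textbf{(1) The composition lemma for the box increment is both unnecessary and, as stated, false for $k\geq 3$.}
You identify as ``the main technical obstacle'' an estimate of the form $\|f(Y)\|_{\alpha,[\mathbf{0},\mathbf{T}]}\lesssim 1+\|Y\|_{\alpha,[\mathbf{0},\mathbf{T}]}^{k}$, where $\|\cdot\|_{\alpha}$ includes the $\square$-part $\|\cdot\|_{\alpha,\square}$. This is misplaced effort: look at the statement of Theorem~\ref{thm:Young integration} — the error estimate involves only $\|Y\|_{\alpha,+}$, the sum of the per-variable H\"older seminorms, \emph{not} the box seminorm of the integrand. The reason is Lemma~\ref{lem:Chens relation identity}, which shows $\delta_{\mathbf{z}}^{(j)}\bigl(Y(\mathbf{x})\square_{\mathbf{x},\mathbf{y}}^{k}X\bigr)$ involves only a single-variable increment of $Y$ multiplied by a box increment of $X$. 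So the fixed-point argument only ever needs $\|f(Y)\|_{\alpha,+}\le\|f'\|_{\infty}\|Y\|_{\alpha,+}$, which is linear and requires only $f\in C^{1}_{b}$; the Lipschitz step $\|f(Y)-f(Z)\|_{\alpha,+}\lesssim\|Y-Z\|_{\alpha}$ needs $f\in C^{2}_{b}$, in accordance with the hypothesis. Your proposed box-composition estimate, by contrast, is not attainable from $C^{2}_{b}$ alone when $k\geq 3$: iterating your inductive identity to bound $\square_{\mathbf{x},\mathbf{y}}^{k}f(Y)$ produces terms like $f^{(k)}(\cdot)\prod_{i}\bigl(\Delta_{i}Y\bigr)$, so one would need $f\in C^{k}_{b}$. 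If this composition lemma were actually needed, the theorem as stated (with only $C^{2}_{b}$) would have to be weakened for $k\geq 3$; the paper sidesteps this entirely, and you should too.

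\textbf{(2) Working in $\mathcal{C}^{\alpha}$ directly versus $\mathcal{C}^{\beta}$ for $\beta<\alpha$.}
The paper runs the Picard iteration in $\mathcal{C}^{\beta}$ for some fixed $\beta\in(\tfrac{1}{2},\alpha)$, on a unit ball, exploiting the extra factor $q_{\alpha-\beta}(\mathbf{0},\mathbf{T})$ coming from downgrading $X$ from $\alpha$ to $\beta$ regularity on a small cube; this uniformly shrinks even the ``constant'' term $f(Y(\mathbf{x}))\square_{\mathbf{x},\mathbf{y}}X$. Your variant of working directly in $\mathcal{C}^{\alpha}$ on a large ball $B_{R}$ can also be made to work, but only with an explicit additional ingredient that you mention but whose role you do not make clear: you must restrict to the affine subspace $Y(\mathbf{0})=\xi_{0}(\mathbf{0})$ so that $g=f(Y_{1})-f(Y_{2})$ vanishes at $\mathbf{0}$ and hence picks up a factor $p_{\alpha}(\mathbf{0},\mathbf{T})$ through $|g(\mathbf{x})|\le\|g\|_{\alpha,+}p_{\alpha}(\mathbf{0},\mathbf{x})$. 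Without this vanishing at the corner, the $\alpha$-box norm of $\Gamma(Y_{1})-\Gamma(Y_{2})$ carries a $\|g\|_{\infty}\|X\|_{\alpha,\square}$ term that does not shrink with $\mathbf{T}$, and the contraction would fail. In the paper's $\beta<\alpha$ formulation this subtlety is absorbed automatically into $q_{\alpha-\beta}(\mathbf{0},\mathbf{T})$.

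\textbf{On the gluing step.}
Your description is correct in spirit but elides the nontrivial bookkeeping the paper carries out: the induction is over the \emph{number of nonzero coordinates} of the corner $\gamma^{(i)}$ of the sub-cube, the integral $\int_{\mathbf{0}}^{\mathbf{x}}$ is decomposed with the $D^{\theta}$-operator into $2^{i}$ pieces living over previously-solved sub-cubes, and one must check both that the resulting $\xi_{\gamma^{(i)}}$ lies in $\mathcal{C}^{\alpha}$ on the new sub-cube \emph{and} that $\|\xi_{\gamma^{(i)}}\|$ is uniformly bounded so that the smallness of $\mathbf{T}$ can be chosen once and for all. Your sketch assumes these without comment; for a complete proof they need to be verified, and they are where most of the work in the paper's second half actually lies.

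In summary: your approach is workable, but you should drop the box-composition lemma (it is not needed and your version is false for $k\geq 3$), be explicit that the contraction estimate relies on pinning $Y(\mathbf{0})=\xi_{0}(\mathbf{0})$ (or, as the paper does, work in $\mathcal{C}^{\beta}$ with $\beta<\alpha$), and flesh out the verification of $\mathcal{C}^{\alpha}$ regularity and uniform bounds of the glued boundary data.
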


\begin{proof}
We will use techniques from \cite{FriHai}, based on proving a unique
fixed point of a Picard type solution map in the space $\mathcal{C}^{\beta}\left(\left[0,1\right]^{k};\mathbb{R}\right)$
for $\beta\in\left(\mathbf{\frac{1}{2}},\alpha\right)\subset\left(0,1\right)^{k}$.
After this, we show that the solution is actually in the the space
$\mathcal{C}^{\alpha}$ as a consequence of the regularity of the
driving field $X\in\mathcal{C}^{\alpha}\left(\left[0,1\right]^{k}\right)$.
The proof will be done in two parts and is based on the behavior of
a linear solution map on the space $\mathcal{C}^{\beta}\left(\left[\rho,\rho+\mathbf{T}\right];\mathbb{R}\right)$
for some $\mathbf{T}>\mathbf{0}$ and $\rho\in\left[\mathbf{0},\mathbf{1}-\mathbf{T}\right]$.
When considering the space $\mathcal{C}^{\beta}\left(\left[\rho,\rho+\mathbf{T}\right];\mathbb{R}\right)$
it is with respect to $X$ also restricted to this space, i.e. $X\in\mathcal{C}^{\beta}\left(\left[\rho,\rho+\mathbf{T}\right];\mathbb{R}\right)$. 

First we will show that the solution map is invariant on a properly
constructed unit ball in $\mathcal{C}^{\beta}\left(\left[\rho,\rho+\mathbf{T}\right];\mathbb{R}\right)$,
and then we will show that it is a contraction mapping on $\mathcal{C}^{\beta}\left(\left[\rho,\rho+\mathbf{T}\right];\mathbb{R}\right)$.
Both of these properties will depend on the choice of a small size
of $\mathbf{T}>\text{\ensuremath{\mathbf{0}}}$ . Therefore, our second
objective is to extend the solution from the hyper cube $\left[\mathbf{\rho},\rho+\mathbf{T}\right]$
to all of $\left[0,1\right]^{k}$ by gluing together the solutions
on all the sets $\left[\mathbf{\rho},\rho+\mathbf{T}\right]$.

Consider now some fixed $\mathbf{T}>\mathbf{0}$, $\rho\in\left[\mathbf{0},\mathbf{1}-\mathbf{T}\right]$,
and a hyper cube $\left[\rho,\rho+\mathbf{T}\right]$. We can reformulate
equation (\ref{eq:Ambit integral equation}) to make the proof a bit
simpler and consider $\xi_{\rho}\in\mathcal{C}^{\beta}\left(\left[\rho,\rho+\mathbf{T}\right]\right)$
which satisfy for some $M\in\mathbb{R}$
\[
\sup_{\rho\in\left[\mathbf{0},\mathbf{1}-\mathbf{T}\right]}\parallel\xi_{\rho}\parallel_{\beta;\left[\mathbf{\rho,\rho+\mathbf{T}}\right]}\leq M\parallel\xi_{0}\parallel_{\beta;\left[0,1\right]^{k}}.
\]
 Define a solution mapping 
\[
\mathcal{V}_{\mathbf{T}}^{\rho}:\mathcal{C}^{\beta}\left(\left[\rho,\rho+\mathbf{T}\right]\right)\rightarrow\mathcal{C}^{\beta}\left(\left[\rho,\rho+\mathbf{T}\right]\right)
\]
\[
Y\mapsto\mathcal{V}_{\mathbf{T}}^{\rho}\left(Y\right):=\xi_{\rho}\left(\mathbf{\cdot}\right)+\int_{\rho}^{\mathbf{\cdot}}f\left(Y\left(\mathbf{z}\right)\right)X\left(d\mathbf{z}\right),
\]
where the integrals runs from $\rho$ as the initial value of the
set $\left[\rho,\rho+\mathbf{T}\right]$. The fact that this mapping
indeed is in $\mathcal{C}_{\square}^{\beta}$ is a consequence of
Theorem \ref{thm:Young integration}, and the fact that for $i\in\left\{ 1,...,k\right\} $
\begin{equation}
\begin{array}{c}
|f\left(Y_{i,\mathbf{x}}\left(y_{i}\right)+\xi_{\rho,i,\mathbf{x}}\left(y_{i}\right)\right)-f\left(Y_{i,\mathbf{x}}\left(x_{i}\right)+\xi_{\rho,i,\mathbf{x}}\left(x_{i}\right)\right)|\\
\lesssim|f|_{C_{b}^{1}}\left(Y_{i,\mathbf{x}}\left(y_{i}\right)-Y_{i,\mathbf{x}}\left(x_{i}\right)+\xi_{\rho,i,\mathbf{x}}\left(y_{i}\right)-\xi_{\rho,i,\mathbf{x}}\left(x_{i}\right)\right)|\\
\lesssim|f|_{C_{b}^{1}}\left(\parallel Y\parallel_{\beta}+\parallel\xi_{\rho}\parallel_{\beta}\right)|y_{i}-x_{i}|^{\beta_{i}}.
\end{array}\label{eq:functions of h=0000F6lder path}
\end{equation}
 Without loss of generality, in the rest of the proof we consider
a subspace of $\mathcal{C}^{\beta}\left(\left[\rho,\rho+\mathbf{T}\right]\right)$
of all fields $Y$ which satisfy $Y\left(\rho\right)=\xi_{\rho}\left(\rho\right)$.
Next, we construct a unit ball in $\mathcal{C}^{\beta}$ centered
at $0$ defined by 
\[
\mathcal{B}_{\rho,\mathbf{T}}:=\left\{ Y\in\mathcal{C}^{\beta}\left(\left[\rho,\rho+\mathbf{T}\right]\right)|\parallel Y\parallel_{\beta}\leq1\right\} ,
\]
and we want to show that for $Y\in\mathcal{B}_{\rho,\mathbf{T}}$
we can choose $\mathbf{T}$ such that $\mathcal{V}_{\mathbf{T}}\left(Y\right)\in\mathcal{B}_{\rho,\mathbf{T}}$.
However, from Theorem \ref{thm:Young integration} together with Equation
(\ref{eq:functions of h=0000F6lder path}), we know 
\[
|\square_{\mathbf{x},\mathbf{y}}\xi_{\rho}+\int_{\mathbf{x}}^{\mathbf{\cdot}\mathbf{y}}f\left(Y\left(\mathbf{z}\right)\right)X\left(d\mathbf{z}\right)|
\]
\begin{equation}
\leq\left(\parallel\xi_{\rho}\parallel_{\beta,\square}+\left(|Y|_{\infty}+\parallel Y\parallel_{\beta}\right)\parallel X\parallel_{\beta,\square}p_{\beta}\left(\mathbf{x},\mathbf{y}\right)\right)q_{\beta}\left(\mathbf{x},\mathbf{y}\right)\label{eq:square reg invariance}
\end{equation}
Therefore we see that 
\begin{equation}
\parallel\mathcal{V}_{\mathbf{T}}\left(Y\right)\parallel_{\beta}\lesssim\left(\parallel\xi_{\rho}\parallel_{\alpha}+\left(|\xi_{\mathbf{0}}|+\parallel Y\parallel_{\beta}\right)\parallel X\parallel_{\alpha,\square}\right)p_{\alpha-\beta}\left(\mathbf{0},\mathbf{T}\right),\label{eq:boundedness of solution in B}
\end{equation}
where we have used that $\parallel X\parallel_{\beta,\square}\leq\parallel X\parallel_{\alpha,\square}q_{\alpha-\beta}\left(\mathbf{0},\mathbf{T}\right)$
and that 
\[
\parallel\mathcal{V}_{\mathbf{T}}\left(Y\right)\parallel_{\beta}=\sum_{i=1}^{k}\sup_{\mathbf{x}\in\left[\rho,\rho+\mathbf{T}\right]}\parallel\mathcal{V}_{\mathbf{T}}\left(Y\right)_{i,\mathbf{x}}\parallel_{\beta_{i}}+\parallel\mathcal{V}_{\mathbf{T}}\left(Y\right)\parallel_{\beta,\square},
\]
then for 
\[
|\mathcal{V}_{\mathbf{T}}\left(Y\right)_{i,\mathbf{x}}\left(v\right)-\mathcal{V}_{\mathbf{T}}\left(Y\right)_{i,\mathbf{x}}\left(u\right)|=|\xi_{\rho,i,\mathbf{x}}\left(v\right)-\xi_{\rho,i,\mathbf{x}}\left(u\right)+\int_{V_{i\mathbf{u}}\mathbf{0}}^{V_{i,\mathbf{v}}\mathbf{x}}f\left(Y\left(\mathbf{z}\right)\right)X\left(d\mathbf{z}\right)|,
\]
we can also use inequality (\ref{eq:square reg invariance}). From
this estimate, it is clear that we can choose $\mathbf{T}=\mathbf{T}_{0}$
small enough such that $p_{\alpha-\beta}\left(\mathbf{0},\mathbf{T}_{0}\right)$
(notice that $\mathbf{T}_{0}$ is independent of $\rho$) gets sufficiently
small to obtain the requirement
\[
\parallel\mathcal{V}_{\mathbf{T}_{0}}\left(Y\right)\parallel_{\beta;\left[\rho,\rho+\mathbf{T}_{0}\right]}\leq1,
\]
and we can conclude that $\mathcal{V}_{\mathbf{T}_{0}}^{\rho}$ leaves
$\mathbf{\mathcal{B}_{\rho,\mathbf{T}_{0}}}$ invariant. 

Next, we will show that $\mathcal{V}_{\mathbf{T}}$ is a contraction
mapping on $\mathcal{C}^{\beta}\left(\left[\rho,\rho+\mathbf{T}\right]\right).$
We can easily see that for two elements $Y,Z\in\mathcal{C}^{\beta}$,
both with boundary given by $\xi_{\rho}\in\mathcal{C}^{\alpha}$ and
satisfying 
\[
\parallel Y\parallel_{\beta},\parallel Z\parallel_{\beta}\leq M,
\]
we use Theorem \ref{thm:Young integration} to see that
\[
\parallel\mathcal{V}_{\mathbf{T}}\left(Y\right)-\mathcal{V}_{\mathbf{T}}\left(Z\right)\parallel_{\beta,\square}
\]
\[
=\parallel\int_{\rho}^{\cdot}f\left(Y\left(\mathbf{z}\right)\right)X\left(d\mathbf{z}\right)-\int_{\rho}^{\cdot}f\left(Z\left(\mathbf{z}\right)\right)X\left(d\mathbf{z}\right)\parallel_{\beta,\square}
\]
\[
\leq C\parallel f\left(Y\right)-f\left(Z\right)\parallel_{\beta,+}\parallel X\parallel_{\alpha,\square}q_{\alpha-\beta}\left(\mathbf{0},\mathbf{T}\right).
\]
We must therefore show that 
\begin{equation}
\parallel f\left(Y\right)-f\left(Z\right)\parallel_{\beta,+}\lesssim\parallel Y-Z\parallel_{\beta},\label{eq:function of young field bound}
\end{equation}
but this is straight forward, using Lemma 8.2 in \cite{FriHai}, we
know that in the 
\[
\parallel f\left(Y_{i,\mathbf{x}}\right)-f\left(Z_{i,\mathbf{x}}\right)\parallel_{\beta_{i}}\leq C_{\beta,M}\parallel f\parallel_{C_{b}^{2}}\left(\parallel Y_{i,\mathbf{x}}-Z_{i,\mathbf{x}}\parallel_{\beta_{i}}\right),
\]
and we can conclude that Equation (\ref{eq:function of young field bound})
holds. Therefore, 
\[
\parallel\mathcal{V}_{\mathbf{T}}\left(Y\right)-\mathcal{V}_{\mathbf{T}}\left(Z\right)\parallel_{\beta,\square}\leq C_{\beta,M}\parallel f\parallel_{C_{b}^{2}}\parallel Y-Z\parallel_{\beta}\parallel X\parallel_{\alpha,\square}q_{\alpha-\beta}\left(\mathbf{0},\mathbf{T}\right),
\]
and we can find that 
\begin{equation}
\parallel\mathcal{V}_{\mathbf{T}}\left(Y\right)-\mathcal{V}_{\mathbf{T}}\left(Z\right)\parallel_{\beta}\leq\tilde{C}_{\beta,M}\parallel f\parallel_{C_{b}^{2}}\parallel Y-Z\parallel_{\beta}\parallel X\parallel_{\alpha,\square}p_{\alpha-\beta}\left(\mathbf{0},\mathbf{T}\right).\label{eq:contraction}
\end{equation}
Again, it is clear that we can choose $\mathbf{T}$ small enough such
that 
\[
\parallel\mathcal{V}_{\mathbf{T}}\left(Y\right)-\mathcal{V}_{\mathbf{T}}\left(Z\right)\parallel_{\beta}\leq q\parallel Y-Z\parallel_{\beta}
\]
for $q\in\left(0,1\right)$. We can conclude that $\mathcal{V}_{\mathbf{T}_{0}}$
admits a unique fixed point in $\mathcal{B}_{\rho,\mathbf{T}_{0}}$
which is the solution to equation (\ref{eq:Ambit integral equation}).
We can now consider Equation (\ref{eq:boundedness of solution in B}),
and choose $\mathbf{\tilde{T}}_{0}$ small, such that $\left(1-\parallel X\parallel_{\alpha,\square}p_{\alpha-\beta}\left(\mathbf{0},\mathbf{\tilde{T}}_{0}\right)\right)>0,$
and see that the solution $Y\in\mathcal{C}^{\beta}$ satisfy 
\[
\parallel Y\parallel_{\beta}\left(1-\parallel X\parallel_{\alpha,\square}p_{\alpha-\beta}\left(\mathbf{0},\mathbf{T}\right)\right)\leq C_{X}\left(\parallel\xi_{\rho}\parallel_{\alpha}+|\xi_{\mathbf{0}}|\right),
\]
and we see that on $\left[\rho,\rho+\mathbf{\tilde{T}}_{0}\vee\mathbf{T}_{0}\right]$
we have 
\begin{equation}
\parallel Y\parallel_{\beta}\leq C_{X}\left(\parallel\xi_{\rho}\parallel_{\alpha}+|\xi_{\mathbf{0}}|\right)\leq MC_{X}\left(\parallel\xi_{0}\parallel_{\alpha}+|\xi_{\mathbf{0}}|\right),\label{eq:bound of solution}
\end{equation}
where we used the assumption that $\parallel\xi_{\rho}\parallel_{\alpha}\leq M\parallel\xi_{0}\parallel_{\alpha}$
uniformly in $\rho$. 

The second part of the proof is to extend the solution from $\mathcal{C}^{\beta}\left(\left[\rho,\rho+\mathbf{T}\right]\right)$
to $\mathcal{C}^{\beta}\left(\left[0,1\right]^{k}\right).$ First
notice that the interval $\left[\rho,\rho+\mathbf{T}\right]$ was
created for some arbitrary point $\rho\in\left[\mathbf{0},\mathbf{1}-\mathbf{T}\right]$,
and specially there exists a solution to equation (\ref{eq:Ambit integral equation})
on $\left[\mathbf{0},\mathbf{T}\right]$. From classical theory of
differential equations, proofs of existence an uniqueness of solutions
on an interval $\left[0,1\right]$ is based on first proving existence
on $\left[0,T\right]$ for some small $T>0$ and then iterate this
solution on $\left[0,T\right]$, $\left[T,2T\right]...,\left[kT,1\right]$
by choosing the initial value of the solution on each interval to
be the terminal value on the interval prior. However, when we work
on hyper cubes, in general, we need to give information about the
solution on the boundary of the hyper cube we work on to be able to
make sure that the solution is of sufficient continuity when gluing
together the cubes in the end. Notice that the hyper cube $\left[\mathbf{0},\mathbf{T}\right]$
is one of $2^{k}$ subsets of $\left[\mathbf{0},2\mathbf{T}\right]$
divided in $\mathbf{T}$. When we want to iterate the solution from
$\left[\mathbf{0},\mathbf{T}\right]$ to one of these $2^{k}$ adjoining
hyper cubes, we must specify the choice of $\xi$ to make sure that
the adjoining boundaries of these hyper cubes have the same values.
We know that the $2^{k}$ hyper cube subsets of $\left[\mathbf{0},2\mathbf{T}\right]$
divided in $\mathbf{T}$ are of the form 
\[
\left[\rho,\rho+\mathbf{T}\right]=\left[0,T_{1}\right]\times\left[T_{2},2T_{2}\right]\times..\times\left[T_{k-1},2T_{k-1}\right]\times\left[0,T_{k}\right],
\]
for $\rho=\left(0,T_{2},..,T_{k-1},0\right).$ First, we want to extend
the solution from $\left[\mathbf{0},\mathbf{T}\right]$ to $\left[\gamma^{\left(1\right)},\gamma^{\left(1\right)}+\mathbf{T}\right]$
for $\gamma^{\left(1\right)}=\left(0,..,0,T_{i},0,..,0\right)$ for
some $0\leq i\leq k$ (the number $1$ in the superscript signifies
that it is one non-zero entry in $\gamma^{\left(1\right)}$) and we
must consider the equation 
\[
Y\left(\mathbf{x}\right)=\xi_{0}\left(\mathbf{x}\right)+\int_{\mathbf{0}}^{\mathbf{x}}f\left(Y\left(\mathbf{z}\right)\right)X\left(d\mathbf{z}\right),\,\,\,\mathbf{x}\in\left[\gamma^{\left(1\right)},\gamma^{\left(1\right)}+\mathbf{T}\right]
\]
and create a suitable solution map with respect to this equation,
similar to what we have done earlier for general $\left[\rho,\rho+\mathbf{T}\right]$.
Note that the integral $\int_{\mathbf{0}}^{\mathbf{x}}$ can be split
in the $i-$Th component of an integral $\int_{0}^{x_{i}}=\int_{0}^{T_{i}}+\int_{T_{i}}^{x_{i}}$,
and obtain the equation 
\[
Y^{|\left[\gamma^{\left(1\right)},\gamma^{\left(1\right)}+\mathbf{T}\right]}\left(\mathbf{x}\right)=\xi_{0}\left(\mathbf{x}\right)+\int_{0}^{x_{1}}...\int_{0}^{T_{i}}...\int_{0}^{x_{k}}f\left(Y\left(\mathbf{z}\right)\right)X\left(d\mathbf{z}\right)
\]
\[
+\int_{0}^{x_{1}}...\int_{T_{i}}^{x_{i}}...\int_{0}^{x_{k}}f\left(Y\left(\mathbf{z}\right)\right)X\left(d\mathbf{z}\right),
\]
\begin{equation}
=\left(\xi_{0}\left(\mathbf{x}\right)-\xi_{0}\left(x_{1},..,T_{i},..,x_{k}\right)\right)+Y^{|\left[\mathbf{0},\mathbf{T}\right]}\left(x_{1},..,T_{i},..,x_{k}\right)+\int_{\gamma^{\left(1\right)}}^{\mathbf{x}}f\left(Y\left(\mathbf{z}\right)\right)X\left(d\mathbf{z}\right)\label{eq:division of equation with one point}
\end{equation}
where $Y^{|\left[\mathbf{0},\mathbf{T}\right]}$ is the solution to
equation (\ref{eq:Ambit integral equation}) on $\left[\mathbf{0},\mathbf{T}\right]$.
Define 
\[
\xi_{\gamma^{\left(1\right)}}\left(\mathbf{x}\right):=\left(\xi_{0}\left(\mathbf{x}\right)-\xi_{0}\left(V_{i,\gamma^{\left(1\right)}}\mathbf{x}\right)\right)+Y^{|\left[\mathbf{0},\mathbf{T}\right]}\left(V_{i,\gamma^{\left(1\right)}}\mathbf{x}\right).
\]
To prove existence and uniqueness, of $Y^{|\left[\gamma^{\left(1\right)},\gamma^{\left(1\right)}+\mathbf{T}\right]}\left(\mathbf{x}\right)$
in the above equation, we know it is sufficient to prove that $\xi_{\gamma^{\left(1\right)}}\in\mathcal{C}^{\alpha}\left(\left[\gamma^{\left(1\right)},\gamma^{\left(1\right)}+\mathbf{T}\right];\mathbb{R}\right)$,
and that $\parallel\xi_{\gamma^{\left(1\right)}}\parallel_{\beta}\leq M\parallel\xi_{0}\parallel_{\beta}$,
and then apply the general result for existence and uniqueness on
general domains $\left[\gamma^{\left(1\right)},\gamma^{\left(1\right)}+\mathbf{T}\right]$
that we obtained in the beginning of this proof. We can see that $\xi_{0}\in\mathcal{C}^{\beta}\left(\left[\gamma^{\left(1\right)},\gamma^{\left(1\right)}+\mathbf{T}\right];\mathbb{R}\right)$
as it is in $\mathcal{C}^{\beta}\left(\left[0,1\right]^{k};\mathbb{R}\right)$
and it is evident that 
\[
\mathbf{x}\mapsto Y^{|\left[\mathbf{0},\mathbf{T}\right]}\left(V_{i,\gamma^{\left(1\right)}}\mathbf{x}\right)\in\mathcal{C}^{\beta}\left(\left[\gamma^{\left(1\right)},\gamma^{\left(1\right)}+\mathbf{T}\right];\mathbb{R}\right).
\]
Indeed, notice that $\mathbf{x}\mapsto Y^{|\left[\mathbf{0},\mathbf{T}\right]}\left(V_{i,\gamma^{\left(1\right)}}\mathbf{x}\right)$
is constant in variable $i$. Moreover, we have that for all $1\leq i\leq k$
\[
\parallel Y^{|\left[\mathbf{0},\mathbf{T}\right]}\left(V_{i,\gamma^{\left(1\right)}}\cdot\right)\parallel_{\beta;\left[\gamma^{\left(1\right)},\gamma^{\left(1\right)}+\mathbf{T}\right]}
\]
\[
\leq\parallel Y^{|\left[\mathbf{0},\mathbf{T}\right]}\left(\cdot\right)\parallel_{\beta;\left[\mathbf{0},\mathbf{T}\right]}
\]
\[
\leq M\parallel\xi_{0},\xi_{0}^{\prime}\parallel_{\beta;\left[0,1\right]^{k}}
\]
where the last bound is obtained from Equation (\ref{eq:bound of solution}).
It follows that there exists a solution 
\[
\mathbf{x}\mapsto Y^{|\left[\gamma^{\left(1\right)},\gamma^{\left(1\right)}+\mathbf{T}\right]}\left(\mathbf{x}\right)
\]
 on $\left[\gamma^{\left(1\right)},\gamma^{\left(1\right)}+\mathbf{T}\right]$
which is connected to the solution $Y^{|\left[\mathbf{0},\mathbf{T}\right]}$
on the boundary. The result holds for all $\gamma^{\left(1\right)}=\left(0,..,T_{i},..,0\right)$,
i.e. all $i\in\left\{ 1,..,k\right\} $.

More generally, consider the operator $D_{\mathbf{z}}^{\theta}=\prod_{i\in\theta}\left(V_{i,\mathbf{x},\mathbf{z}}+V_{i,\mathbf{z},\mathbf{y}}\right)$
which is dividing the interval $\left[\mathbf{x},\mathbf{y}\right]$
in $\mathbf{z}$ and considers a linear evaluation of a function on
this division. Consider the interval $\left[\mathbf{0},\mathbf{x}\right]$
for $\mathbf{x}\in\left[\gamma^{\left(1\right)},\gamma^{\left(1\right)}+\mathbf{T}\right],$
the point $\gamma^{\left(1\right)}$ divides $\left[\mathbf{0},\mathbf{x}\right]$
into two hyper cubes, as seen in Equation (\ref{eq:division of equation with one point}).
From now on we will use the division of hyper cubes induced by $D$.
Next we assume that $\gamma^{\left(2\right)}=\left(0,..0,T_{i},0,..,0,T_{j},0,..,0\right)$
i.e. $\gamma^{\left(2\right)}$ contains $2$ non zero entries with
$T_{i}$ and $T_{j}$ for $i\neq j$. We then want to construct the
solution to, 
\[
Y\left(\mathbf{x}\right)=\xi_{0}\left(\mathbf{x}\right)+\int_{\mathbf{0}}^{\mathbf{x}}f\left(Y\left(\mathbf{z}\right)\right)X\left(d\mathbf{z}\right),\,\,\,\mathbf{x}\in\left[\gamma^{\left(2\right)},\gamma^{\left(2\right)}+\mathbf{T}\right],
\]
again using $D_{\mathbf{z}}^{\theta}$ with $\theta=\left(i,j\right)$
and the linearity of the integral, we write 
\[
Y\left(\mathbf{x}\right)=\xi_{0}\left(\mathbf{x}\right)+D_{\gamma^{\left(2\right)}}^{\theta}\left(\int_{\mathbf{0}}^{\mathbf{x}}f\left(Y\left(\mathbf{z}\right)\right)X\left(d\mathbf{z}\right)\right)
\]
\[
=\xi_{0}\left(\mathbf{x}\right)+\left(V_{i,\mathbf{0},\gamma^{\left(2\right)}}+V_{i,\gamma^{\left(2\right)},\mathbf{x}}\right)\left(V_{j,\mathbf{0},\gamma^{\left(2\right)}}+V_{j,\gamma^{\left(2\right)},\mathbf{x}}\right)\left(\int_{\mathbf{0}}^{\mathbf{x}}f\left(Y\left(\mathbf{z}\right)\right)X\left(d\mathbf{z}\right)\right)
\]
The integral $\int_{\mathbf{0}}^{\mathbf{x}}$ is divided into $4$
parts, one of them is depending on the solution $Y^{|\left[\mathbf{0},\mathbf{T}\right]}\left(\cdot,..,T_{i},..,T_{j},..,\cdot\right)$
(i.e.. with two fixed variables), and there are two integrals associated
to the solutions $Y^{|\left[V_{i,\gamma^{\left(2\right)}}\mathbf{0},V_{i,\gamma^{\left(2\right)}}\mathbf{x}\right]}\left(\cdot,..,T_{j},..,\cdot\right)$
and $Y^{|\left[V_{j,\gamma^{\left(2\right)}}\mathbf{0},V_{j,\gamma^{\left(2\right)}}\mathbf{x}\right]}\left(\cdot,..,T_{i},..,\cdot\right)$
both already constructed as solutions in the prior step (showing solutions
on 
\[
\left[\gamma^{\left(1\right)},\gamma^{\left(1\right)}+\mathbf{T}\right]=\left[0,T_{1}\right]\times..\times\left[T_{i},2T_{i}\right]\times...\times\left[0,T_{k}\right]
\]
 for all $1\leq i\leq k$. ). The forth is the integral with respect
to the the hyper cube $\left[\gamma^{\left(2\right)},\gamma^{\left(2\right)}+\mathbf{T}\right]$.
Similarly as earlier, it is straight forward to show that the conditions
on $Y^{|\left[V_{i,\gamma^{\left(2\right)}}\mathbf{0},V_{i,\gamma^{\left(2\right)}}\mathbf{x}\right]}\left(\cdot,..,T_{j},..,\cdot\right),\,\,Y\left(\cdot,..,T_{i},..,\cdot\right)$
and $Y^{|\left[\mathbf{0},\mathbf{T}\right]}\left(\cdot,..,T_{i},..,T_{j},..,\cdot\right)$
are fulfilled to conclude that there exists a unique solution to Equation
($\ref{eq:Ambit integral equation})$ on $\left[\gamma^{\left(2\right)},\gamma^{\left(2\right)}+\mathbf{T}\right]$
when $\gamma^{\left(2\right)}$ contains two non-zero entries of the
form $\left(T_{i},T_{j}\right)$, and we know that the solution on
this domain is identical on its boundary as the solutions on the domains
we have constructed prior. \\
\\
By induction principles, we assume there exists unique solutions with
boundaries identical to the prior constructed solutions on all intervals
of the form $\left[\gamma^{\left(i-1\right)},\gamma^{\left(i-1\right)}+\mathbf{T}\right]$
where $\gamma^{(i-1)}$ contains $i-1$ non zero entries each chosen
in order from $\left(T_{1},..,T_{k}\right)$. That is, at position
$n$ in $\gamma^{(i-1)}$ it is either $0$ or $T_{n}$ for any $1\leq n\leq k$.
Then we want to prove that the solution exists for any hyper cube
$\left[\gamma^{\left(i\right)},\gamma^{(i)}+\mathbf{T}\right]$ where
$\gamma^{\left(i\right)}$ consist of $i$ ordered and unique non-zero
entries as described for $\gamma^{(i-1)}$. Consider $\theta\in\left\{ 1,..,k\right\} ^{i}$
of $i$ ordered components, i.e. if $i\leq j$ then $\theta$ contains
no entries $\left(..,j.,.,i,..\right).$ For $\mathbf{x}\in\left[\gamma^{(i)},\gamma^{(i)}+\mathbf{T}\right]$
we want to divide $\left[\mathbf{0},\mathbf{x}\right]$ in $\gamma^{\left(i\right)}$
according to $\theta$ in the sense that 
\[
D_{\gamma^{\left(i\right)}}^{\theta}\left(\int_{\mathbf{0}}^{\mathbf{x}}f\left(Y\left(\mathbf{z}\right)\right)X\left(d\mathbf{z}\right)\right)=\prod_{l\in\theta}\left(V_{l,\mathbf{x},\mathbf{z}}+V_{l,\mathbf{z},\mathbf{y}}\right)\int_{\mathbf{0}}^{\mathbf{x}}f\left(Y\left(\mathbf{z}\right)\right)X\left(d\mathbf{z}\right),
\]
 which would yield a sum of $2^{i}$ integrals. One of these integrals
is an integral over $\left[\gamma^{\left(i\right)},\gamma^{\left(i\right)}+\mathbf{T}\right],$and
another is over $\left[\mathbf{0},\gamma^{\left(i\right)}\right]$,
which would represent the solution on $\left[\mathbf{0},\mathbf{T}\right]$
with $i$ fixed variables $\left\{ T_{l}\right\} _{l\in\theta}$ $.$
All other integrals represent solutions on domains $\left[\gamma^{\left(n\right)},\gamma^{\left(n\right)}+\mathbf{T}\right]$
for $1<n<i$ with $i-n$ fixed variables. All of these solutions on
domains on the form $\left[\gamma^{\left(n\right)},\gamma^{\left(n\right)}+\mathbf{T}\right]$
for $1\leq n<i$ are already proven to exist uniquely (by induction
assumption ) and are connected on each of their boundaries, and can
therefore be collected in a term called $\xi_{\gamma^{\left(i\right)}}.$
Similar to what we have shown earlier for $\gamma^{\left(1\right)}$
and $\gamma^{\left(2\right)}$, it follows that $\xi_{\gamma^{\left(i\right)}}\in\mathcal{C}^{\beta}\left(\left[\gamma^{\left(i\right)},\gamma^{\left(i\right)}+\mathbf{T}\right];\mathbb{R}\right)$,
and we can show that $\parallel\xi_{\gamma^{\left(i\right)}}\parallel_{\beta}\leq M\parallel\xi_{0}\parallel_{\beta}$.
Therefore, we conclude that there exists a unique solution to Equation
(\ref{eq:Ambit integral equation}) on any domain $\left[\gamma^{\left(i\right)},\gamma^{\left(i\right)}+\mathbf{T}\right]$
for all $1\leq i\leq k$ . In particular there exists solutions on
all hyper-cubes on the form 
\[
\left[0,T_{1}\right]\times..\times\left[T_{j},2T_{j}\right]\times...\times\left[T_{k},2T_{k}\right],
\]
 and all the solutions is connected on its boundary to the other solutions.
This leads to the conclusion that there exists a unique solution to
Equation (\ref{eq:Ambit integral equation}) on all $2^{k}$ hyper
cubes which is subsets of $\left[\mathbf{0},2\mathbf{T}\right]$ divided
in $\mathbf{T}$, and by Proposition \ref{prop:scaleability of H=0000F6lder norms},
we conclude that there exists a unique solution to Equation (\ref{eq:Ambit integral equation})
on $\left[\mathbf{0},2\mathbf{T}\right]$. We then iterate this procedure
to obtain existence and uniqueness on all of $\left[0,1\right]^{k}.$
\end{proof}
\begin{cor}
Let $X,\tilde{X}\in\mathcal{C}^{\alpha}\left(\left[0,1\right]^{k};\mathbb{R}\right)$
be two Hölder fields of regularity $\alpha\in\left(\frac{1}{2},1\right)^{k}$.
Assume $Y,\tilde{Y}\in\mathcal{C}^{\alpha}$ are two solutions to
the correpsonding equations 
\[
\begin{array}{c}
Y\left(\mathbf{x}\right)=\xi\left(\mathbf{x}\right)+\int_{\mathbf{0}}^{\mathbf{x}}f\left(Y\left(\mathbf{z}\right)\right)X\left(d\mathbf{z}\right)\\
\tilde{Y}\left(\mathbf{x}\right)=\tilde{\xi}\left(\mathbf{x}\right)+\int_{\mathbf{0}}^{\mathbf{x}}f\left(\tilde{Y}\left(\mathbf{z}\right)\right)\tilde{X}\left(d\mathbf{z}\right)
\end{array}
\]
 for $\xi,\tilde{\xi}\in\mathcal{C}^{\alpha}$ and $f\in C_{b}^{2}\left(\mathbb{R};\mathbb{R}\right)$.
Choose $M\in\mathbb{R}$ such that 
\[
\parallel X\parallel_{\alpha}\vee\parallel\tilde{X}\parallel_{\alpha}\vee\parallel Y\parallel_{\alpha}\vee\parallel\tilde{Y}\parallel_{\alpha}\leq M.
\]
Then for all $\beta<\alpha\in\left(\frac{1}{2},1\right)^{k}$ we have
\[
\parallel Y-\tilde{Y}\parallel_{\beta}\leq C_{M,f}\left(|\xi\left(\mathbf{0}\right)-\tilde{\xi}\left(\mathbf{0}\right)|+\parallel\xi-\tilde{\xi}\parallel_{\beta}+\parallel X-\tilde{X}\parallel_{\beta,\square}\right).
\]
 
\end{cor}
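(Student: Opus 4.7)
The plan is to subtract the two equations, write the integral difference in a standard telescoping form, and run a local fixed point / contraction argument on small hyper-cubes $[\rho,\rho+\mathbf T]$, finally gluing via Proposition \ref{prop:scaleability of H=0000F6lder norms}. Writing $\Delta=Y-\tilde Y$ and subtracting, one gets
\[
\Delta(\mathbf x)=(\xi-\tilde\xi)(\mathbf x)+\int_{\mathbf 0}^{\mathbf x}\bigl(f(Y(\mathbf z))-f(\tilde Y(\mathbf z))\bigr)X(d\mathbf z)+\int_{\mathbf 0}^{\mathbf x}f(\tilde Y(\mathbf z))\,(X-\tilde X)(d\mathbf z),
\]
so the problem reduces to estimating these two Young integrals in the $\mathcal C^{\beta}$ norm. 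The key point is that we have sacrificed a bit of regularity, working with $\beta<\alpha$, which will leave room to absorb a small factor $p_{\alpha-\beta}(\mathbf 0,\mathbf T)$ coming from local domains and thereby close the estimate.

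First I would record the basic tools. From Theorem \ref{thm:Young integration} together with inequality (\ref{eq:regularity of integral}) applied on a hyper-cube $[\rho,\rho+\mathbf T]$, for $H\in\mathcal C^{\beta}$ and $K\in\mathcal C^{\alpha}$ with $\alpha_i+\beta_i>1$,
\[
\Bigl\|\int_{\rho}^{\cdot}H(\mathbf z)K(d\mathbf z)\Bigr\|_{\beta;[\rho,\rho+\mathbf T]}\le C\bigl(|H(\rho)|+\|H\|_{\beta,+}\bigr)\|K\|_{\alpha,\square}\,p_{\alpha-\beta}(\mathbf 0,\mathbf T)^{?}\cdot(\text{const}),
\]
used in two flavours: once with $H=f(Y)-f(\tilde Y)$ and $K=X$, and once with $H=f(\tilde Y)$ and $K=X-\tilde X$ (where in the second case we only use the $\mathcal C^{\beta}$ regularity of $K$). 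The composition estimate (from Lemma 8.2 in \cite{FriHai}, already invoked in the proof of Theorem \ref{thm:Ambit differential equation}) gives $\|f(Y)-f(\tilde Y)\|_{\beta}\le C_{M,f}\|\Delta\|_{\beta}$, using the uniform bound $M$ on $Y,\tilde Y$; likewise $\|f(\tilde Y)\|_{\beta}\le C_{M,f}$.

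Next I would localise. Pick $\mathbf T>\mathbf 0$ small (to be fixed later, uniformly in $\rho$) and work on $[\rho,\rho+\mathbf T]$. Combining the two Young estimates with the composition bound and the definition of $\|\cdot\|_{\beta}$ (including the $\mathcal C^{\beta}_{\square}$ part and the slice norms, exactly as in the proof of Theorem \ref{thm:Ambit differential equation}), one obtains on $[\rho,\rho+\mathbf T]$
\[
\|\Delta\|_{\beta;[\rho,\rho+\mathbf T]}\le\|\xi-\tilde\xi\|_{\beta;[\rho,\rho+\mathbf T]}+C_{M,f}\|X-\tilde X\|_{\beta,\square}+C_{M,f}\|X\|_{\alpha,\square}\,p_{\alpha-\beta}(\mathbf 0,\mathbf T)\,\|\Delta\|_{\beta;[\rho,\rho+\mathbf T]},
\]
plus a harmless initial-value contribution $|\Delta(\rho)|$ which is controlled inductively from the preceding hyper-cubes. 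Choosing $\mathbf T=\mathbf T_{0}$ small enough that $C_{M,f}\|X\|_{\alpha,\square}p_{\alpha-\beta}(\mathbf 0,\mathbf T_{0})\le\tfrac12$, this absorption yields
\[
\|\Delta\|_{\beta;[\rho,\rho+\mathbf T_0]}\le 2\bigl(|\Delta(\rho)|+\|\xi-\tilde\xi\|_{\beta;[\rho,\rho+\mathbf T_0]}+C_{M,f}\|X-\tilde X\|_{\beta,\square}\bigr).
\]

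Finally I would propagate this to all of $[0,1]^{k}$ exactly in the spirit of the second part of the proof of Theorem \ref{thm:Ambit differential equation}: extend $\Delta$ step by step through adjacent cubes $[\gamma^{(i)},\gamma^{(i)}+\mathbf T_0]$, at each step the ``effective boundary datum'' being $\xi-\tilde\xi$ plus the values of $\Delta$ on the previously solved faces, which by induction are themselves bounded by $|\xi(\mathbf 0)-\tilde\xi(\mathbf 0)|+\|\xi-\tilde\xi\|_{\beta}+\|X-\tilde X\|_{\beta,\square}$. After finitely many steps we cover $[0,1]^{k}$, and Proposition \ref{prop:scaleability of H=0000F6lder norms} reassembles the local $\mathcal C^{\beta}$-bounds into a global one, proving the claim. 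The main obstacle is bookkeeping of the boundary terms during the gluing: one has to verify, in parallel with the existence proof, that the induced boundary data on each new cube are again $\mathcal C^{\beta}$ and uniformly controlled by the three quantities on the right-hand side, and this is where the constant $C_{M,f}$ truly depends (through a geometric accumulation factor) on $M$ and on $\|X\|_{\alpha,\square}$.
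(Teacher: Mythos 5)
Your proposal is correct and follows essentially the same route as the paper: telescope the difference of the two Young integrals, apply Theorem \ref{thm:Young integration} together with the composition estimate (\ref{eq:function of young field bound}), absorb the $\parallel Y-\tilde Y\parallel_{\beta}$ term on small cubes using the $p_{\alpha-\beta}(\mathbf 0,\mathbf T)$ factor, and glue via Proposition \ref{prop:scaleability of H=0000F6lder norms}. The only cosmetic difference is that you split the integral up front by linearity, whereas the paper performs the equivalent telescoping on the germ $f(Y(\mathbf x))\square_{\mathbf x,\mathbf y}X-f(\tilde Y(\mathbf x))\square_{\mathbf x,\mathbf y}\tilde X$ inside the sewing estimate.
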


\begin{proof}
Again consider the solution on a hyper-cube $\left[\rho,\rho+\mathbf{T}\right]$.
Note that $Y$ is the fixed point of a solution map 
\[
Z\left(\mathbf{x}\right):=\mathcal{V}_{\mathbf{T}}\left(Y\right)\left(\mathbf{x}\right)=\left(\xi\left(\mathbf{x}\right)+\int_{0}^{\mathbf{x}}f\left(Y\left(\mathbf{z}\right)\right)X\left(d\mathbf{z}\right)\right)
\]
 similar to that constructed in the proof of Theorem \ref{thm:Ambit differential equation},
and we have the same for $\tilde{Y}$ and $\tilde{\xi}$ in $\mathcal{C}^{\alpha}$
Moreover, it is clear that 
\[
\parallel Y-\tilde{Y}\parallel_{\beta}\leq\parallel\xi-\tilde{\xi}\parallel_{\beta}+\parallel\int_{0}^{\mathbf{\cdot}}f\left(Y\left(\mathbf{z}\right)\right)X\left(d\mathbf{z}\right)-\int_{0}^{\mathbf{\cdot}}f\left(\tilde{Y}\left(\mathbf{z}\right)\right)\tilde{X}\left(d\mathbf{z}\right)\parallel_{\beta}.
\]
From Theorem \ref{thm:Young integration} we have that 
\[
|\square_{\mathbf{x},\mathbf{y}}\int_{0}^{\mathbf{\cdot}}f\left(Y\left(\mathbf{z}\right)\right)X\left(d\mathbf{z}\right)-\square_{\mathbf{x},\mathbf{y}}\int_{0}^{\mathbf{\cdot}}f\left(\tilde{Y}\left(\mathbf{z}\right)\right)\tilde{X}\left(d\mathbf{z}\right)|
\]
\[
\leq|f\left(Y\left(\mathbf{x}\right)\right)\square_{\mathbf{x},\mathbf{y}}X-f\left(\tilde{Y}\left(\mathbf{x}\right)\right)\square_{\mathbf{x},\mathbf{y}}\tilde{X}|
\]
\[
+\sum_{i=1}^{k}\parallel\delta^{\left(i\right)}\left(f\left(Y\right)\square X-\tilde{f\left(Y\right)}\square\tilde{X}\right)\parallel_{2\beta}q_{\beta}\left(\mathbf{x},\mathbf{y}\right)p_{\beta}\left(\mathbf{x},\mathbf{y}\right).
\]
 using techniques as in the proof of Theorem \ref{thm:Young integration}
one can see that 
\[
\parallel\delta^{\left(i\right)}\left(f\left(Y\right)\square X-\tilde{f\left(Y\right)}\square\tilde{X}\right)\parallel_{2\beta}
\]
\[
\leq C\left(\parallel f\left(Y\right)-f\left(\tilde{Y}\right)\parallel_{\beta,+}\parallel X\parallel_{\beta,\square}+\parallel f\left(\tilde{Y}\right)\parallel_{\beta,+}\parallel X-\tilde{X}\parallel_{\beta,\square}\right),
\]
Furthermore, we can see that 
\[
f\left(Y\left(\mathbf{x}\right)\right)\square_{\mathbf{x},\mathbf{y}}X-f\left(\tilde{Y}\left(\mathbf{x}\right)\right)\square_{\mathbf{x},\mathbf{y}}\tilde{X}
\]
\[
=\left(f\left(Y\left(\mathbf{x}\right)\right)-f\left(\tilde{Y}\left(\mathbf{x}\right)\right)\right)\square_{\mathbf{x},\mathbf{y}}X+f\left(\tilde{Y}\left(\mathbf{x}\right)\right)\left(\square_{\mathbf{x},\mathbf{y}}X-\square_{\mathbf{x},\mathbf{y}}\tilde{X}\right).
\]
and we know that 
\[
\parallel f\left(Y\right)-f\left(\tilde{Y}\right)\parallel_{\infty}\leq|f\left(Y\left(\mathbf{0}\right)\right)-f\left(\tilde{Y}\left(\mathbf{0}\right)\right)|+\parallel f\left(Y\right)-f\left(\tilde{Y}\right)\parallel_{\beta},
\]
 and using Equation (\ref{eq:function of young field bound}) we can
see
\[
\parallel f\left(Y\right)-f\left(\tilde{Y}\right)\parallel_{\infty}\leq C_{\beta,M}\parallel f\parallel_{C_{b}^{2}}\left(|\xi\left(\mathbf{0}\right)-\tilde{\xi}\left(\mathbf{0}\right)|+\parallel f\parallel_{C_{b}^{2}}\parallel Y-\tilde{Y}\parallel_{\beta}\right).
\]
Also, it is clear that
\[
|\square_{\mathbf{x},\mathbf{y}}X-\square_{\mathbf{x},\mathbf{y}}\tilde{X}|\leq\parallel X-\tilde{X}\parallel_{\beta,\square}q_{\beta}\left(\mathbf{x},\mathbf{y}\right).
\]
 and therefore 
\[
\parallel Y-\tilde{Y}\parallel_{\beta}
\]
\[
\leq C_{M,f}\left(|\xi\left(\mathbf{0}\right)-\tilde{\xi}\left(\mathbf{0}\right)|+\parallel\xi-\tilde{\xi}\parallel_{\beta}+\parallel X-\tilde{X}\parallel_{\beta,\square}+\parallel Y-\tilde{Y}\parallel_{\beta}p_{\alpha-\beta}\left(\mathbf{0},\mathbf{T}\right)\right).
\]
From and we again choose $\mathbf{T}=\mathbf{\frac{1}{2}}$ such that
for all hyper-cubes $\left[\rho,\rho+\mathbf{T}\right]$ 
\[
\parallel Y-\tilde{Y}\parallel_{\beta;\left[\rho,\rho+\mathbf{T}\right]}\leq C_{M,f}2\left(|\xi\left(\mathbf{0}\right)-\tilde{\xi}\left(\mathbf{0}\right)|+\parallel\xi-\tilde{\xi}\parallel_{\beta}+\parallel X-\tilde{X}\parallel_{\beta,\square}\right),
\]
then by the scalability of Hölder norms from Lemma \ref{prop:scaleability of H=0000F6lder norms},
we obtain the desired result.
\end{proof}
\bibliographystyle{plain}

\end{document}